\documentclass[reqno]{style}
\usepackage{boxedminipage,times,pslatex,amsmath, mathtools, graphicx, amssymb, paralist, cite, color}
\usepackage[all]{xypic}
\definecolor{e-mail}{rgb}{0,.40,.80}
\definecolor{reference}{rgb}{.20,.60,.22}
\definecolor{citation}{rgb}{0,.40,.80}
\usepackage[pdfstartview=FitH, colorlinks, linkcolor=reference, citecolor=citation, urlcolor=e-mail]{hyperref}

\vfuzz2pt 
\hfuzz2pt 
\newtheorem{thm}{Theorem}
\newtheorem{cor}[thm]{Corollary}
\newtheorem{lem}[thm]{Lemma}
\newtheorem{prop}[thm]{Proposition}
\newtheorem{claim}[thm]{Claim}
\theoremstyle{definition}
\newtheorem{defn}[thm]{Definition}
\theoremstyle{remark}
\newtheorem{rem}[thm]{Remark}
\numberwithin{thm}{section}
\theoremstyle{definition}

\theoremstyle{definition}

\theoremstyle{definition}
\numberwithin{equation}{section}

 \title[Computing difference-differential Galois groups of second-order equations]{Computation of the difference-differential Galois group and differential relations among solutions for a second-order linear difference equation}
\author{Carlos E. Arreche}
\email{cearrech@math.ncsu.edu}
\address{Mathematics Department, North Carolina State University, Raleigh, NC 27695}
\classification{39A10, 39A06, 12H10, 12H05, 20H20}

 \begin{document}


\begin{abstract}
We apply the difference-differential Galois theory developed by Hardouin and Singer to compute the differential-algebraic relations among the solutions to a second-order homogeneous linear difference equation of the form $ y(x+2)+a(x)y(x+1)+b(x)y(x)=0,$ where the coefficients $a(x),b(x)\in \bar{\mathbb{Q}}(x)$ are rational functions in $x$ with coefficients in $\bar{\mathbb{Q}}$. We develop algorithms to compute the difference-differential Galois group associated to such an equation, and show how to deduce the differential-algebraic relations among the solutions from the defining equations of the Galois group.
\end{abstract}

\maketitle

\section{Introduction} \label{intro-sec}

Consider a second-order homogeneous linear difference equation \begin{equation}\label{intro-eq} \sigma^2(y)+a\sigma(y)+by=0,\end{equation} whose coefficients $a,b\in\bar{\mathbb{Q}}(x)$, and where $\sigma$ denotes the $\bar{\mathbb{Q}}$-linear automorphism defined by $\sigma(x)=x+1$. We are motivated by the question: do the solutions of \eqref{intro-eq} satisfy any $\frac{d}{dx}$-algebraic equations over $\bar{\mathbb{Q}}(x)$? And if so, how can we compute all such differential-algebraic relations? We give complete answers to these questions as an application of the difference-differential Galois theory developed in \cite{hardouin-singer:2008}, which studies equations such as \eqref{intro-eq} from a purely algebraic point of view. This theory attaches a linear differential algebraic group $G$ (Definition~\ref{ldag-def}) to \eqref{intro-eq}, which group encodes all the difference-differential algebraic relations among the solutions to \eqref{intro-eq}. We develop an algorithm to compute $G$, and then show how the knowledge of $G$ leads to a concrete description of the sought difference-differential algebraic relations among the solutions.

The difference-differential Galois theory of \cite{hardouin-singer:2008} is a generalization of the difference Galois theory presented in \cite{vanderput-singer:1997}, where the Galois groups that arise are linear algebraic groups that encode the difference-algebraic relations among the solutions to a given linear difference equation. An algorithm to compute the difference Galois group $H$ associated to \eqref{intro-eq} by the theory of \cite{vanderput-singer:1997} is developed in \cite{hendriks:1998}. The computation of $G$ is more difficult than that of $H$, because there are many more linear differential algebraic groups than there are linear algebraic groups (more precisely, the latter are instances of the former), so identifying the correct difference-differential Galois group from among these possibilities requires additional work.

However, the difference Galois group $H$ serves as a close upper bound for the difference-differential Galois group $G$: it is shown in \cite{hardouin-singer:2008} that one can consider $G$ as a Zariski-dense subgroup of $H$ without loss of generality (see Proposition~\ref{dense} for a precise statement). In view of this fact, our strategy to compute $G$ is to first apply the algorithm of \cite{hendriks:1998} to compute $H$, and then compute the additional differential-algebraic equations (if any) that define $G$ as a subgroup of $H$. 

This strategy is reminiscent of the one begun in \cite{dreyfus:2011}, and concluded in \cite{arreche:2014a, arreche:2014b, arreche:2015}, to compute the parameterized differential Galois group for a second-order linear differential equation with differential parameters, where the results of \cite{kovacic:1986, baldassari-dwork:1979} are first applied to compute the classical (non-parameterized) differential Galois group for the differential equation, and one then computes the additional differential-algebraic equations, with respect to the parametric derivations, that define the parameterized differential Galois group inside the classical one. However, the computation of the difference-differential Galois group for \eqref{intro-eq} presents substantial new complications, which we describe below. 

Firstly, in the parameterized differential algorithm one first computes an associated unimodular differential Galois group as in \cite{dreyfus:2011, arreche:2014a}, and then recovers the original Galois group from this associated unimodular group and the change-of-variables data as in \cite{arreche:2014b, arreche:2015}. This reduction is not available in the difference-differential Galois theory, where it is not always possible to tensor away the effect of the determinant, so we must compute $G$ directly. An upshot of this is that the defining equations for $G$ obtained here are more explicit than those produced in \cite{arreche:2014b, arreche:2015} for the parameterized differential setting.

A second complication is that the inverse problem in the difference-differential Galois theory (that is: which linear differential algebraic groups arise as difference-differential Galois groups?) remains open, whereas in the parameterized differential Galois theory there is a complete answer to this question proved in \cite{dreyfus-thesis,mitschi-singer:2012b}. The knowledge of which linear differential algebraic groups arise as parameterized differential Galois groups is used systematically (though often implicitly) in the algorithms of \cite{dreyfus:2011, arreche:2014a, arreche:2014b,arreche:2015}. Partial progress on the inverse problem has been achieved recently in \cite{arreche-singer:2016}, where the authors characterize which groups can occur as difference-differential Galois groups of integrable or projectively integrable linear difference equations.

A third complication, which already occurs in the (non-differential) difference Galois theory of \cite{vanderput-singer:1997}, is that the Picard-Vessiot rings (Definition~\ref{pv-ring-def}---these are the analogues of the splitting fields encountered in classical Galois theory) in this theory are not always domains, which makes the application of the Galois correspondence (Theorem~\ref{correspondence}) more subtle in the present setting.

To put our work in context, let us mention some of the previous work that has been done on related problems. Algorithms to compute the Galois group for a second-order linear equation have been developed in the following cases: differential equations \cite{kovacic:1986}; difference equations \cite{hendriks:1998}; $q$-dilation equations \cite{hendriks:1997}; and Mahler equations \cite{roques:2015}. A general algorithm to compute the classical (non-differential) difference Galois group for a linear difference equation over $\bar{\mathbb{Q}}(x)$ of arbitrary order is presented in \cite{feng:2015b}. This algorithm can be considered as a difference analogue of the algorithm developed in \cite{hrushovski:2002} (see also \cite{feng:2015a}) to compute the classical (non-parameterized) differential Galois group for a linear differential equation of arbitrary order. Based on the results of \cite{hrushovski:2002}, algorithms were developed in \cite{minchenko-ovchinnikov-singer:2013a, minchenko-ovchinnikov-singer:2013b} to compute the parameterized differential Galois group of a linear differential equation of arbitrary order with differential parameters, subject to the condition that either the Galois group is reductive, or else that its maximal reductive quotient is differentially constant. An algorithm to compute the parameterized Galois group of a parameterized linear differential equation whose underlying differential operator is the composition of two completely reducible operators is presented in \cite{hardouin-ovchinnikov:2015}, and their results can be applied to compute difference-differential Galois groups in some of the cases that we consider here. Finally, algorithms to decide whether the solutions to certain linear $q$-dilation or Mahler equations are differentially independent are presented in \cite{dreyfus-hardouin-roques:2015}, and algorithms to decide whether the solutions to certain linear $q$-dilation difference equations satisfy additional functional equations are presented in \cite{dreyfus-hardouin-roques:2016}.

Let us now describe the contents of this work in more detail. In \S\ref{prelim-sec}, we summarize the difference-differential Galois theory of \cite{hardouin-singer:2008}, and prove some auxiliary results that will be used in the sequel. In \S\ref{hendriks-sec}, we summarize Hendriks' algorithm \cite{hendriks:1998} to compute the difference Galois group $H$ for \eqref{intro-eq}. In \S\ref{diagonalizable-sec}, we show how to compute the difference-differential Galois group $G$ for \eqref{intro-eq} when $H$ is diagonalizable in Proposition~\ref{galdiag}. In \S\ref{reducible-sec}, we show how to compute $G$ when $H$ is assumed to be reducible but nondiagonalizable in Proposition~\ref{wrat} and Proposition~\ref{bigut2}. The main theoretical result of \S\ref{reducible-sec} is Theorem~\ref{rad}, which states that the unipotent radical of $G$, a priori a linear differential algebraic group, is actually an algebraic group, which result is of independent interest. In \S\ref{dihedral-sec}, we compute $G$ in Proposition~\ref{dihedral-complete} as an application of Proposition~\ref{galdiag}, under the assumption that $H$ is irreducible and imprimitive. In \S\ref{large-sec}, we apply results from \cite{arreche-singer:2016} to compute $G$ in Theorem~\ref{g-large}, under the assumption that $H$ contains $\mathrm{SL}_2$. In \S\ref{relations-sec}, we show how to produce the difference-differential algebraic relations among the solutions of \eqref{intro-eq} from the knowledge of $G$. We conclude in \S\ref{examples-sec} by applying these results in some concrete examples.

\section{Preliminaries on difference-differential Galois theory} \label{prelim-sec}

We begin with a summary of the difference-differential Galois theory presented in \cite{hardouin-singer:2008}. We also prove some auxiliary results that will be useful in the following sections. Every field is assumed to be of characteristic zero.

\begin{defn} A $\sigma\delta$\emph{-ring} is a commutative ring $R$ with unit, equipped with an automorphism $\sigma$ and a derivation $\delta$ such that $\sigma\left(\delta(r)\right)=\delta\left(\sigma(r)\right)$ for every $r\in R$. A $\sigma\delta$-field is defined analogously. We write $$R^\sigma=\{r\in R \ | \ \sigma(r)=r\};\quad R^\delta=\{r\in R \ | \ \delta(r)=0\}; \quad\text{and}\quad R^{\sigma\delta}=R^\sigma\cap R^\delta,$$ and refer to these as the subrings of $\sigma$\emph{-constants}, $\delta$\emph{-constants}, and $\sigma\delta$\emph{-constants}, respectively.

A $\sigma\delta$-$R$-algebra is a $\sigma\delta$-ring $S$ equipped with a ring homomorphism $R\rightarrow S$ that commutes with both $\sigma$ and $\delta$. If $R$ and $S$ are fields, we also say that $S$ is a $\sigma\delta$-field extension of $R$. The notions of $\sigma$-$R$-algebra, $\delta$-$R$-algebra, $\sigma$-field extension, and $\delta$-field extension are defined analogously. If $z_1,\dots,z_n\in S$, we write $R\{z_1,\dots,z_n\}_\delta$ for the smallest $\delta$-$R$-subalgebra of $S$ that contains $z_1,\dots,z_n$; as $R$-algebras, we have $$R\{z_1,\dots,z_n\}_\delta=R[\{\delta^i(z_1),\dots,\delta^i(z_n) \ | \ i\in\mathbb{N}\}].$$ If $Z=(z_{ij})$ with $1\leq i,j\leq n$ is a matrix, we write $R=\{Z\}_\delta$ for $R\{z_{11},\dots,z_{1n},\dots,z_{n1},\dots,z_{nn}\}_\delta$.
\end{defn}

The main example of $\sigma\delta$-field that we will consider throughout most of this paper is $k=\bar{\mathbb{Q}}(x)$, where $\sigma$ denotes the $\bar{\mathbb{Q}}$-linear automorphism defined by $\sigma(x)= x+1$, and $\delta=\tfrac{d}{dx}$. Note that in this case $k^\sigma=k^\delta=\bar{\mathbb{Q}}$.

Suppose that $k$ is a $\sigma\delta$-field, and consider the matrix difference equation \begin{equation}\label{difeq1} \sigma(Y)=AY, \quad \text{where} \ A\in\mathrm{GL}_n(k).\end{equation}

\begin{defn} \label{pv-ring-def}A $\sigma\delta$\emph{-Picard-Vessiot ring} (or $\sigma\delta$\emph{-PV ring}) over $k$ for \eqref{difeq1} is a $\sigma\delta$-$k$-algebra $R$ such that:
\begin{enumerate}
\item[(i)] $R$ is a \emph{simple} $\sigma\delta$-ring, i.e., $R$ has no ideals, other than ${0}$ and $R$, that are invariant under both $\sigma$ and $\delta$;
\item[(ii)] there exists a matrix $Z\in\mathrm{GL}_n(R)$ such that $\sigma(Z)=AZ$; and
\item[(iii)] $R$ is differentially generated as a $\delta$-$k$-algebra by the entries of $Z$ and $1/\mathrm{det}(Z)$, i.e., $R=k\{Z,1/\mathrm{det}(Z)\}_\delta$.
\end{enumerate}
The matrix $Z$ is called a \emph{fundamental solution matrix} for \eqref{difeq1}. \end{defn}

Note that when $\delta=0$, this coincides with the definition of the $\sigma$-PV ring over $k$ for \eqref{difeq1} given in \cite[Def.~1.5]{vanderput-singer:1997}. In the usual Galois theory of difference equations presented in \cite{vanderput-singer:1997}, the existence and uniqueness of Picard-Vessiot rings up to $k$-$\sigma$-isomorphism is guaranteed by the assumption that $k^\sigma$ is algebraically closed (see \cite[\S1.1]{vanderput-singer:1997}). Analogously, in the difference-differential Galois theory developed in \cite{hardouin-singer:2008}, one needs to assume that $k^\sigma$ is $\delta$\emph{-closed} \cite{kolchin:1974, trushin:2010}.

\begin{defn} The ring of $\delta$-\emph{polynomials} in $n$ variables over a $\delta$-field $C$ is $$C\{Y_1,\dots,Y_n\}_\delta=C[\{\delta^i(Y_1),\dots,\delta^i(Y_n) \ | \ i\in\mathbb{N}\}],$$ the free $C$-algebra on the symbols $\delta^i(Y_j)$. We say $\mathcal{L}\in C\{Y_1,\dots,Y_n\}_\delta$ is a \emph{linear} $\delta$-\emph{polynomial} if it belongs to the $C$-linear span of the symbols $\delta^i(Y_j)$.

If $R$ is a $\delta$-$C$-algebra, we say that $z_1\dots,z_n\in R$ are \emph{differentially dependent} over $C$ if there exists a $\delta$-polynomial $0\neq P\in C\{Y_1,\dots,Y_n\}_\delta$ such that $P(z_1,\dots,z_n)=0$; otherwise we say that $z_1,\dots,z_n$ are $\delta$-\emph{independent} over $C$. When a single element $z\in R$ is $\delta$-independent (resp., $\delta$-dependent) over $k$, we also say that $z$ is $\delta$-\emph{transcendental} (resp., $\delta$-\emph{algebraic}) over $k$.

We say the $\delta$-field $C$ is $\delta$-\emph{closed} if, for any system of $\delta$-polynomial equations $$\{P_1=0,\dots,P_m=0 \ | \ P_i\in C\{Y_1,\dots,Y_n\}_\delta \ \ \text{for} \ \ 1\leq i\leq m\}$$ that admits a solution in $\tilde{C}^n$ for some $\delta$-field extension $\tilde{C}$ of $C$, there already exists a solution $C^n$.
\end{defn}

\begin{thm} \label{nnc}(Cf.~\cite[Prop.~2.4]{hardouin-singer:2008}) If $k^\sigma$ is $\delta$-closed, there exists a $\sigma\delta$-PV ring for \eqref{difeq1}, and it is unique up to $\sigma\delta$-$k$-isomorphism. Moreover, $R^\sigma=k^\sigma$.
\end{thm}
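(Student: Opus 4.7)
The proof divides into existence, the constants statement $R^\sigma = k^\sigma$, and uniqueness. I expect the middle step to be the main obstacle, as it is the place where the hypothesis that $k^\sigma$ is $\delta$-closed is used in an essential way.

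For existence, I would begin with the $\sigma\delta$-$k$-algebra $T = k\{X,1/\det(X)\}_\delta$, where $X = (X_{ij})$ is an $n \times n$ matrix of $\delta$-indeterminates. The automorphism $\sigma$ extends uniquely to an automorphism of $T$ commuting with $\delta$ by the rule $\sigma(X) = AX$; invertibility of $A \in \mathrm{GL}_n(k)$ ensures surjectivity. A Zorn's lemma argument produces a maximal $\sigma\delta$-ideal $\mathfrak{m} \subset T$, and the quotient $R := T/\mathfrak{m}$ satisfies axioms (i)--(iii) of Definition~\ref{pv-ring-def} by construction.

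For $R^\sigma = k^\sigma$, the subring $R^\sigma$ is automatically $\delta$-stable from $\sigma\delta = \delta\sigma$, and $\sigma\delta$-simplicity of $R$ implies that every nonzero element of $R^\sigma$ is a non-zero-divisor in $R$, so $R^\sigma$ is in fact a $\delta$-field extension of $k^\sigma$. Now suppose for contradiction that there exists $c \in R^\sigma \setminus k^\sigma$, and consider the $\delta$-ideal $I \subset k^\sigma\{Y\}_\delta$ of $\delta$-polynomials $P$ with $P(c) = 0$ in $R$. Whether $I$ is the zero ideal (i.e., $c$ is $\delta$-transcendental over $k^\sigma$) or a proper nonzero $\delta$-ideal, the $\delta$-closedness of $k^\sigma$ produces some $c_0 \in k^\sigma$ distinct from $c$ such that $c - c_0$ generates a proper nontrivial $\sigma\delta$-ideal of $R$ (proper because $c \neq c_0$; $\sigma$-stable because $c \in R^\sigma$ and $c_0 \in k^\sigma$; $\delta$-stable after saturation using the annihilating $\delta$-polynomials in $I$), contradicting $\sigma\delta$-simplicity. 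Making the specialization-to-ideal step precise in both cases is the delicate part.

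For uniqueness, given two $\sigma\delta$-PV rings $R_1, R_2$, I would form $S = R_1 \otimes_k R_2$, naturally a $\sigma\delta$-$k$-algebra, and quotient by a maximal $\sigma\delta$-ideal to obtain a $\sigma\delta$-simple $\bar{S}$. The two composites $R_i \hookrightarrow S \twoheadrightarrow \bar{S}$ are injective by $\sigma\delta$-simplicity of each $R_i$, and if $Z_1, Z_2$ denote the images of the respective fundamental matrices in $\bar{S}$, then $Z_1^{-1} Z_2$ has entries in $\bar{S}^\sigma$ since $\sigma(Z_i) = A Z_i$. The argument of the previous paragraph applies verbatim to $\bar{S}$ to yield $\bar{S}^\sigma = k^\sigma$, so $Z_1^{-1} Z_2 \in \mathrm{GL}_n(k^\sigma) \subset \mathrm{GL}_n(k)$; this forces the images of $R_1$ and $R_2$ in $\bar{S}$ to coincide, yielding the desired $\sigma\delta$-$k$-isomorphism $R_1 \cong R_2$.
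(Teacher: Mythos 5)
A point of orientation first: the paper does not prove Theorem~\ref{nnc} at all --- it is quoted from \cite[Prop.~2.4]{hardouin-singer:2008} --- so your attempt can only be measured against the standard proof in that reference, which adapts \cite[Prop.~1.9]{vanderput-singer:1997} to the $\sigma\delta$-setting. Your existence step (quotient $k\{X,1/\det X\}_\delta$ with $\sigma(X)=AX$ by a maximal $\sigma\delta$-ideal) and your uniqueness step (maximal $\sigma\delta$-ideal of $R_1\otimes_kR_2$, then $Z_1^{-1}Z_2\in\mathrm{GL}_n(\bar S^{\sigma})$) are exactly the standard arguments and are fine, \emph{conditional} on the constants statement holding for every $\sigma\delta$-simple, finitely $\delta$-generated $\sigma\delta$-$k$-algebra (you need it for $\bar S$, not just for $R$, and $\bar S$ does satisfy these hypotheses).

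The genuine gap is in the constants step, and it is larger than your own caveat suggests. First, for $c\in R^{\sigma}$ with $\delta(c)\neq 0$, neither $cR$ nor $\mathrm{Ann}_R(c)$ is a $\delta$-ideal (since $\delta(cr)=\delta(c)r+c\,\delta(r)$), so $\sigma\delta$-simplicity cannot be applied to them; the $\delta$-ideal generated by $c$ \emph{is} a $\sigma\delta$-ideal and hence equals $R$, but that gives neither invertibility nor the non-zero-divisor property, and in any case ``every nonzero element of $R^{\sigma}$ is a non-zero-divisor'' would not make $R^{\sigma}$ a field. Second, and more seriously, your contradiction rests on producing a ``proper nontrivial $\sigma\delta$-ideal'' containing $c-c_0$, with properness justified ``because $c\neq c_0$.'' That only shows the ideal is nonzero; by $\sigma\delta$-simplicity a nonzero $\sigma\delta$-ideal is automatically all of $R$, so the entire content of the argument is the properness claim, i.e.\ that $1$ does not lie in the ideal generated by $\{\delta^i(c-c_0)\}_{i\geq 0}$, and nothing in your sketch addresses it. In the $\delta$-transcendental case the annihilator $I$ is zero, so $\delta$-closedness by itself selects no useful $c_0$; one must exploit that $R$ is finitely $\delta$-generated over $k$ (via a differential constructibility/Nullstellensatz argument in the spirit of Remark~\ref{c-ok}, or the Ritt--Raudenbush basis theorem) to find $c_0\in k^{\sigma}$ for which $c-c_0$ fails to be invertible, and then a separate argument to turn non-invertibility of a $\sigma$-constant into a proper $\sigma\delta$-ideal. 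This is precisely where Hardouin and Singer invest the real work (ultimately showing that a $\sigma\delta$-PV ring is even $\sigma$-simple, the fact this paper quotes as \cite[Cor.~6.22]{hardouin-singer:2008} in the proof of Lemma~\ref{sfin}); it is not a routine verification.
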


From now on, unless explicitly stated otherwise, we assume that $k$ is a $\sigma\delta$-field such that $k^\sigma$ is $\delta$-closed.

\begin{defn} The $\sigma\delta$\emph{-Galois group} of the $\sigma\delta$-PV ring $R$ for \eqref{difeq1} is the group of $\sigma\delta$-$k$-automorphisms of $R$: $$\mathrm{Gal}_{\sigma\delta}(R/k)=\{\gamma\in\mathrm{Aut}_{k\text{-alg}}(R) \ | \ \gamma\circ\sigma = \sigma\circ\gamma \ \text{and} \ \gamma\circ\delta = \delta\circ\gamma\}.$$\end{defn}

As in the usual (non-differential) Galois theory of difference equations \cite{vanderput-singer:1997}, the choice of fundamental solution matrix $Z\in\mathrm{GL}_n(R)$ defines a representation $\mathrm{Gal}_{\sigma\delta}(R/k)\hookrightarrow\mathrm{GL}_n(k^\sigma):\gamma\mapsto T_\gamma$, via $$\gamma(Z)=\begin{pmatrix} \gamma(z_{11}) & \cdots & \gamma(z_{1n}) \\ \vdots & & \vdots \\ \gamma(z_{n1}) & \cdots & \gamma(z_{nn}) \end{pmatrix} = \begin{pmatrix} z_{11} & \cdots & z_{1n} \\ \vdots & & \vdots \\ z_{n1} & \cdots & z_{nn} \end{pmatrix}\cdot T_\gamma.$$ A different choice of fundamental solution matrix $Z'\in\mathrm{GL}_n(R)$ defines a conjugate representation of $\mathrm{Gal}_{\sigma\delta}(R/k)$ in $\mathrm{GL}_n(k^\sigma)$.

\begin{defn} \label{equivalent-def} The systems $\sigma(Y)=AY$ and $\sigma(Y)=BY$ for $A,B\in\mathrm{GL}_n(k)$ are \emph{equivalent} if there exists a matrix $T\in\mathrm{GL}_n(k)$ such that $\sigma(T)AT^{-1}=B$. In this case, if $Z$ is a fundamental solution matrix for $\sigma(Y)=AY$, then $TZ$ is a fundamental solution matrix for $\sigma(Y)=BY$, and therefore the $\sigma\delta$-PV rings of $k$ for these systems defined by the choice of fundamental solution matrices $Z$ and $TZ$, and the associated representations of $\sigma\delta$-Galois groups in $\mathrm{GL}_n(k^\sigma)$, are the same.\end{defn}

\begin{defn} \label{ldag-def} Suppose that $C$ is a $\delta$-closed field. A \emph{linear differential algebraic group} over $C$ is a subgroup $G$ of $\mathrm{GL}_n(C)$ defined by (finitely many) $\delta$-polynomial equations in the matrix entries. We say that $G$ is $\delta$\emph{-constant} if $G$ is conjugate in $\mathrm{GL}_n(C)$ to a subgroup of $\mathrm{GL}_n(C^\delta)$.\end{defn}

The differential algebraic subgroups of the additive and multiplicative groups of $C$, which we denote respectively by $\mathbb{G}_a(C)$ and $\mathbb{G}_m(C)$, were classified in \cite[Prop.~11, Prop.~31 and its Corollary]{cassidy:1972}.

\begin{prop} \label{classification} If $G\leq \mathbb{G}_a(C)$ is a differential algebraic subgroup, then there exists a linear $\delta$-polynomial $\mathcal{L}\in C\{Y\}_\delta$ such that $$G=\{b\in\mathbb{G}_a(C) \ | \ \mathcal{L}(b)=0\}.$$

If $G\leq \mathbb{G}_m(C)$ is a differential algebraic subgroup, then either $G=\mu_\ell$, the group of $\ell^\text{th}$ roots of unity for some $\ell\in\mathbb{N}$, or else $\mathbb{G}_m(C^\delta)\subseteq G$, and there exists a linear $\delta$-polynomial $\mathcal{L}\in C\{Y\}_\delta$ such that $$G=\left\{a\in\mathbb{G}_m(C) \ \middle| \ \mathcal{L}\bigl(\tfrac{\delta a}{a}\bigr)=0\right\}.$$\end{prop}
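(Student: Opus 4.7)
The plan is to follow the original arguments of Cassidy \cite{cassidy:1972}.

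For the additive case, let $G \leq \mathbb{G}_a(C)$ be a differential algebraic subgroup with defining $\delta$-ideal $I(G) \subset C\{Y\}_\delta$. If $G = \mathbb{G}_a(C)$ the zero $\delta$-polynomial works. Otherwise, the subgroup condition translates into $I(G)$ being a Hopf ideal for the comultiplication $Y \mapsto Y \otimes 1 + 1 \otimes Y$ on $C\{Y\}_\delta$. The crucial step is to show that $I(G)$ is generated as a radical differential ideal by its linear part $V := I(G) \cap \bigoplus_{i \geq 0} C \cdot \delta^i(Y)$: for any $P \in I(G)$ of total degree $d \geq 2$, the polynomial $P(Y_1 + Y_2) - P(Y_1) - P(Y_2)$ vanishes on $G \times G$ and has strictly smaller degree in each variable, and an induction on $d$, combined with the radicality of $I(G)$, reduces everything to linear $\delta$-polynomials. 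Finally, $V$ is a left $C[\delta]$-submodule of $\bigoplus_{i \geq 0} C\cdot \delta^i(Y) \cong C[\delta]$, which is a principal left ideal domain (the Ore ring over the field $C$), so $V$ is generated by a single element $\mathcal{L}$.

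For the multiplicative case, the logarithmic derivative $\ell\delta : \mathbb{G}_m(C) \to \mathbb{G}_a(C)$, $a \mapsto \delta(a)/a$, is a surjective group homomorphism with kernel $\mathbb{G}_m(C^\delta)$. Given a differential algebraic subgroup $G \leq \mathbb{G}_m(C)$, the image $\ell\delta(G)$ is a differential algebraic subgroup of $\mathbb{G}_a(C)$, so by the additive case $\ell\delta(G) = \{b \in \mathbb{G}_a(C) \mid \mathcal{L}(b) = 0\}$ for some linear $\delta$-polynomial $\mathcal{L}$. I would then split into cases depending on whether $\mathbb{G}_m(C^\delta) \subseteq G$. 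If so, then any $a$ with $\mathcal{L}(\ell\delta(a)) = 0$ satisfies $\ell\delta(a) = \ell\delta(g)$ for some $g \in G$, so $a/g \in \mathbb{G}_m(C^\delta) \subseteq G$ and hence $a \in G$, giving the stated formula. If not, then $G \cap \mathbb{G}_m(C^\delta)$ is a proper algebraic subgroup of the algebraic torus $\mathbb{G}_m(C^\delta)$ and therefore equals $\mu_\ell$ for some $\ell$; a further argument (exploiting that $\ell\delta(G)$ is a $C^\delta$-vector subspace of $\mathbb{G}_a(C)$ together with the $\delta$-closedness of $C$) forces $\ell\delta(G) = 0$, so that $G \subseteq \mathbb{G}_m(C^\delta)$ and $G = \mu_\ell$.

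The main obstacle is the reduction to linear $\delta$-polynomials in the additive case: the degree induction via the identity $P(Y_1 + Y_2) - P(Y_1) - P(Y_2) \in I(G \times G)$ is where the subgroup axiom does its essential work, and this is the technical core of Cassidy's classification. Once this structural result is available, the multiplicative case follows by relatively formal manipulations combining the logarithmic derivative exact sequence with the classification of algebraic subgroups of $\mathbb{G}_m$.
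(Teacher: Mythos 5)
First, a point of reference: the paper does not prove this proposition at all --- it is quoted from Cassidy \cite[Prop.~11, Prop.~31 and its Corollary]{cassidy:1972} --- so there is no in-paper argument to compare yours against, and your sketch has to stand on its own as a reconstruction of Cassidy's proofs. Your additive case is a fair outline of the standard argument: the Hopf-ideal condition for $Y\mapsto Y\otimes 1+1\otimes Y$, the reduction to the linear part of $I(G)$ via $P(Y_1+Y_2)-P(Y_1)-P(Y_2)$, and the fact that $C[\delta]$ is a left principal ideal domain to produce a single generator $\mathcal{L}$. You correctly identify the degree induction as the technical core and defer to Cassidy for it; I will not press on the details there (though note the induction needs the refinement that one works with a $P\in I(G)$ of minimal degree not in the ideal generated by the linear part, and separates the mixed terms $\sum_j Q_j(Y_1)R_j(Y_2)$ with the $R_j$ linearly independent on $G$).

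The genuine gap is in the multiplicative case, which you describe as ``relatively formal manipulations.'' The step ``a further argument $\ldots$ forces $\ell\delta(G)=0$'' is not formal: it is exactly the content of the Corollary to Cassidy's Proposition 31, namely the dichotomy that an infinite differential algebraic subgroup of $\mathbb{G}_m(C)$ must contain $\mathbb{G}_m(C^\delta)$. From the hypotheses you list --- $G\cap\mathbb{G}_m(C^\delta)=\mu_\ell$ finite and $\ell\delta(G)$ a $C^\delta$-subspace of $\mathbb{G}_a(C)$ --- one only extracts an abstract isomorphism $G/\mu_\ell\simeq\ell\delta(G)$, and, after further reductions, a Kolchin-closed subgroup of $\mathbb{G}_m(C)$ mapping with finite kernel onto a nonzero $C^\delta$-subspace of $\mathbb{G}_a(C)$; nothing in that bookkeeping rules such a subgroup out (it is Zariski dense, torsion-free, and meets the constants trivially, and no contradiction falls out of the exact sequence alone). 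Excluding it requires an argument specific to the multiplicative comultiplication $y\mapsto y\otimes y$ --- e.g., showing that the radical Hopf ideal of a Zariski-dense subgroup of $\mathbb{G}_m$ is generated by elements of the form $y^{N}\mathcal{L}\bigl(\tfrac{\delta y}{y}\bigr)$ --- which is precisely what Cassidy proves. As written, the multiplicative half of your proof assumes the hardest part of the statement it is meant to establish. (A smaller point in the same direction: the claim that $\ell\delta(G)$ is again a \emph{Kolchin-closed} subgroup of $\mathbb{G}_a(C)$ also needs justification, via differential elimination over the $\delta$-closed field $C$ and the fact that a constructible subgroup equals its closure.)
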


\begin{thm} (Cf.~\cite[Thm.~2.6]{hardouin-singer:2008}) Suppose that $k^\sigma$ is $\delta$-closed, and that $R$ is a $\sigma\delta$-PV ring over $k$ for \eqref{difeq1}. Then $R$ is a reduced ring, and the choice of fundamental solution matrix $Z\in\mathrm{GL}_n(R)$ identifies $\mathrm{Gal}_{\sigma\delta}(R/k)$ with a linear differential algebraic subgroup of $\mathrm{GL}_n(k^\sigma)$.\end{thm}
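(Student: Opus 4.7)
The proof breaks naturally into two parts: first, showing that $R$ is reduced; and second, using the fundamental solution matrix to identify $\mathrm{Gal}_{\sigma\delta}(R/k)$ with a subgroup of $\mathrm{GL}_n(k^\sigma)$ cut out by $\delta$-polynomial equations.

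For reducedness, the plan is to show that the nilradical $\mathfrak{N}$ of $R$ is a $\sigma\delta$-ideal. Invariance under $\sigma$ is automatic since $\sigma$ is an automorphism. For $\delta$-invariance, if $r^n=0$, successive applications of the Leibniz rule yield an integer $N$ with $\delta(r)^N=0$, so $\delta(\mathfrak{N})\subseteq\mathfrak{N}$. Since $R$ is $\sigma\delta$-simple and $1\notin\mathfrak{N}$, we must have $\mathfrak{N}=0$.

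For any $\gamma\in\mathrm{Gal}_{\sigma\delta}(R/k)$, the matrix $\gamma(Z)$ satisfies $\sigma(\gamma(Z))=A\gamma(Z)$, so $T_\gamma:=Z^{-1}\gamma(Z)$ is $\sigma$-invariant, hence $T_\gamma\in\mathrm{GL}_n(R^\sigma)=\mathrm{GL}_n(k^\sigma)$ by Theorem~\ref{nnc}. The map $\gamma\mapsto T_\gamma$ is an injective group homomorphism into $\mathrm{GL}_n(k^\sigma)$, since the entries of $Z$ and $1/\det(Z)$ $\delta$-generate $R$ over $k$. To show that its image is a linear differential algebraic subgroup, I would form $S:=R\otimes_k R$ with the diagonal $\sigma\delta$-structure, set $Z_1=Z\otimes 1$ and $Z_2=1\otimes Z$, and consider $\tilde{T}:=Z_1^{-1}Z_2\in\mathrm{GL}_n(S)$. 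Since $Z_1$ and $Z_2$ satisfy the same equation, the entries of $\tilde{T}$ all lie in $S^\sigma$, and I set $U:=k^\sigma\{\tilde{T},1/\det(\tilde{T})\}_\delta\subseteq S^\sigma$.

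The crux of the argument, which I expect to be the main obstacle, is to show that the natural map $R\otimes_{k^\sigma} U\to S$ is an isomorphism; this in particular identifies $U$ with the full ring of $\sigma$-invariants of $S$. Following the strategy of the classical Picard-Vessiot theorem for difference equations \cite{vanderput-singer:1997}, surjectivity holds because $Z_2=Z_1\tilde{T}$ forces the image to contain both tensor factors, while injectivity requires showing that $R\otimes_{k^\sigma} U$, with its inherited $\sigma\delta$-structure, is $\sigma\delta$-simple; this uses both the $\sigma\delta$-simplicity of $R$ and crucially the hypothesis that $k^\sigma$ is $\delta$-closed, which supplies enough $\sigma$-constants to reduce the vanishing of a $\sigma\delta$-ideal to the $R$-side of the tensor product. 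Once this isomorphism is in place, $\sigma\delta$-$k$-algebra endomorphisms of $R$ correspond bijectively, via $\mathrm{id}_R\otimes\phi$, to $\delta$-$k^\sigma$-algebra homomorphisms $\phi:U\to k^\sigma$, and every such endomorphism is automatically an automorphism by the $\sigma\delta$-simplicity of $R$. Under this correspondence $T_\gamma=\phi(\tilde{T})$, so $\mathrm{Gal}_{\sigma\delta}(R/k)$ is the vanishing locus in $\mathrm{GL}_n(k^\sigma)$ of the defining $\delta$-ideal of $U$ inside $k^\sigma\{X_{ij},1/\det(X)\}_\delta$, which is by definition a linear differential algebraic subgroup.
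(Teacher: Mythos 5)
First, a point of comparison: the paper does not prove this theorem itself --- it imports it from \cite[Thm.~2.6]{hardouin-singer:2008} --- so your proposal should be measured against the proof given there, which it essentially reproduces. Your reducedness argument is correct: the nilradical is $\sigma$-stable because $\sigma$ is an automorphism, $\delta$-stable by the characteristic-zero Leibniz-rule computation, and therefore zero by $\sigma\delta$-simplicity. The identification of $T_\gamma=Z^{-1}\gamma(Z)$ as an element of $\mathrm{GL}_n(R^\sigma)=\mathrm{GL}_n(k^\sigma)$ via Theorem~\ref{nnc}, and the passage through $S=R\otimes_kR$, $\tilde{T}=Z_1^{-1}Z_2$, and $U=k^\sigma\{\tilde{T},1/\det(\tilde{T})\}_\delta$, is exactly the standard route.

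There is, however, one step that would fail as written: $R\otimes_{k^\sigma}U$ is essentially never $\sigma\delta$-simple. For any proper nonzero $\delta$-ideal $I$ of $U$ --- and $U$, being the $\delta$-coordinate ring of the Galois group, has such ideals whenever the group is nontrivial (e.g.\ the augmentation ideal at the identity element) --- the extension $R\otimes_{k^\sigma}I$ is a proper nonzero $\sigma\delta$-ideal of $R\otimes_{k^\sigma}U$. What you actually need is the weaker ideal-correspondence lemma (the $\sigma\delta$-analogue of the lemma used for this purpose in \cite{vanderput-singer:1997}, proved in \cite{hardouin-singer:2008}): for $N$ any $\delta$-$k^\sigma$-algebra regarded as a $\sigma\delta$-ring with trivial $\sigma$-action, every $\sigma\delta$-ideal $J$ of $R\otimes_{k^\sigma}N$ is generated by $J\cap N$. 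This is where the $\sigma\delta$-simplicity of $R$ and the equality $R^\sigma=k^\sigma$ enter: one takes an element of $J$ with a shortest expression $\sum r_i\otimes n_i$, uses simplicity to normalize $r_1=1$, and applies $\sigma-\mathrm{id}$ to shorten. Injectivity of $R\otimes_{k^\sigma}U\to S$ then follows because the kernel is a $\sigma\delta$-ideal meeting $U\subseteq S^\sigma$ trivially, not because the source is simple. With that correction, the rest of your argument --- the bijection between $\delta$-$k^\sigma$-algebra homomorphisms $U\to k^\sigma$ and $\sigma\delta$-$k$-endomorphisms of $R$, the automatic bijectivity of the latter, and the conclusion that the image of $\gamma\mapsto T_\gamma$ is the Kolchin-closed subset cut out by the defining $\delta$-ideal of $U$ (using $\delta$-closedness of $k^\sigma$ so that the maximal $\delta$-ideals of $U$ all have residue field $k^\sigma$) --- goes through and matches the cited proof.
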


As in \cite[p.~337]{hardouin-singer:2008}, we observe that if $R$ is a $\sigma\delta$-PV ring over $k$ for \eqref{difeq1}, and $K$ is the total ring of fractions of $R$, then any $\sigma\delta$-$k$-automorphism of $K$ must leave $R$ invariant, whence the group $\mathrm{Gal}_{\sigma\delta}(K/k)$ of such automorphisms coincides with $\mathrm{Gal}_{\sigma\delta}(R/k)$. The consideration of the total ring of fractions of $R$ is necessary to obtain the following Galois correspondence.

\begin{thm} \label{correspondence} (Cf.~\cite[Thm.~2.7]{hardouin-singer:2008}) Suppose that $k^\sigma$ is $\delta$-closed, and that $R$ is a $\sigma\delta$-PV ring over $k$ for \eqref{difeq1}. Denote by $K$ the total ring of fractions of $R$, and by $\mathcal{F}$ the set of $\sigma\delta$-rings $F$ such that $k\subseteq F \subseteq K$ and every non-zero divisor in $F$ is a unit in $F$. Let $\mathcal{G}$ denote the set of linear differential algebraic subgroups $H$ of $\mathrm{Gal}_{\sigma\delta}(K/k)$. There is a bijective correspondence $\mathcal{F}\leftrightarrow\mathcal{G}$ given by $$F \mapsto \mathrm{Gal}_{\sigma\delta}(K/F)=\{\gamma\in\mathrm{Gal}_{\sigma\delta}(K/k) \ | \ \gamma(r)=r, \ \forall r\in F\}; \quad \text{and} \quad H \mapsto K^H=\{r\in K \ | \ \gamma(r)=r, \ \forall \gamma\in H\}.$$\end{thm}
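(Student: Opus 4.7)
The plan is to prove that the two compositions
\[
H \mapsto K^H \mapsto \mathrm{Gal}_{\sigma\delta}(K/K^H) \quad\text{and}\quad F \mapsto \mathrm{Gal}_{\sigma\delta}(K/F) \mapsto K^{\mathrm{Gal}_{\sigma\delta}(K/F)}
\]
are the identity on $\mathcal{G}$ and $\mathcal{F}$ respectively. I would first record the structure of $R$ and $K$: since $R$ is a simple $\sigma\delta$-ring it is in particular a simple $\sigma$-ring, so the standard decomposition of simple difference rings (cf.~\cite[\S1.1]{vanderput-singer:1997}) yields $R = e_1R \oplus \cdots \oplus e_tR$, a direct product of integral domains whose orthogonal idempotents $e_i$ are cyclically permuted by $\sigma$. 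Consequently $K = K_1 \times \cdots \times K_t$ is a finite product of fields, and $\delta$ extends uniquely from $R$ to $K$. Both maps need to be checked to be well-defined: the stabilizer $\mathrm{Gal}_{\sigma\delta}(K/F)$ is cut out inside $G := \mathrm{Gal}_{\sigma\delta}(K/k)$ by $\delta$-polynomial conditions derived from a finite set of $\delta$-generators of $F$ over $k$, and an analysis of how $H$ acts on the idempotents $e_i$ shows that every non-zero-divisor in $K^H$ is invertible in $K^H$, so $K^H \in \mathcal{F}$.

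The easy containments $H \subseteq \mathrm{Gal}_{\sigma\delta}(K/K^H)$ and $F \subseteq K^{\mathrm{Gal}_{\sigma\delta}(K/F)}$ are tautological. For the reverse inclusion $K^{\mathrm{Gal}_{\sigma\delta}(K/F)} \subseteq F$, I would exhibit a $\sigma\delta$-PV ring $R_F := F\{Z, 1/\mathrm{det}(Z)\}_\delta \subseteq K$ for the system $\sigma(Y)=AY$ sitting over $F$: simplicity of $R_F$ reduces to a contraction argument against the simplicity of $R$, and one verifies that $F^\sigma = k^\sigma$ (sandwiched between $k^\sigma$ and $K^\sigma$, the latter equal to $k^\sigma$ by Theorem~\ref{nnc} and the product structure of $K$) so that Theorem~\ref{nnc} applies over $F$. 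The natural identification of $\mathrm{Gal}_{\sigma\delta}(R_F/F)$ with $\mathrm{Gal}_{\sigma\delta}(K/F)$, combined with the standard fact in Picard-Vessiot theory that the invariants of a $\sigma\delta$-PV Galois group in (the total ring of fractions of) its PV ring coincide with the base, then yields $K^{\mathrm{Gal}_{\sigma\delta}(K/F)} = F$.

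The main obstacle will be the remaining inclusion $\mathrm{Gal}_{\sigma\delta}(K/K^H) \subseteq H$. The standard route is to invoke a Chevalley-type theorem for linear differential algebraic groups (due to Cassidy \cite{cassidy:1972}): one finds a finite-dimensional $\delta$-rational representation $V$ of $G$ over $k^\sigma$ and a line $L \subseteq V$ with $H = \mathrm{Stab}_G(L)$. One must then realize $V$ concretely inside tensor constructions built from $R$ and its dual, locate a generator of $L$ as an element of $K$ stabilized up to scalar by $H$, and produce from it a nonzero invariant in $K^H$ that any hypothetical $\gamma \in \mathrm{Gal}_{\sigma\delta}(K/K^H) \setminus H$ would fail to fix, yielding a contradiction. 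The hard part is that $K$ is not a field but a finite product of fields, which forces one to carry out the Tannakian-style reconstruction in the category of $K$-modules rather than $K$-vector spaces, carefully tracking the permutation action of $G$ on the idempotents $e_i$; additionally, the $\delta$-closedness of $k^\sigma$ is used crucially to guarantee that the differential algebraic subgroup $H$ is determined by its invariants in sufficiently rich $\delta$-representations.
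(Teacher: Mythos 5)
The paper does not prove this theorem at all: it is imported verbatim from Hardouin--Singer (the ``Cf.~[Thm.~2.7]'' is the entire justification), so there is no in-paper argument to compare yours against. Your outline does, however, follow the standard architecture of the proof given there (which adapts the difference-equation correspondence of van der Put--Singer, Thm.~1.29): decompose $R$ into a $\sigma$-orbit of domains via idempotents, prove $K^{\mathrm{Gal}_{\sigma\delta}(K/F)}=F$ by exhibiting $R_F=F\{Z,1/\mathrm{det}(Z)\}_\delta$ as a $\sigma\delta$-PV ring over $F$, and prove $\mathrm{Gal}_{\sigma\delta}(K/K^H)\subseteq H$ by a Chevalley-type theorem for linear differential algebraic groups. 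Several of your intermediate claims are in fact fine with short arguments you did not write down: e.g.\ $\sigma\delta$-simplicity of $R_F$ really does follow by contraction, because every element of $K$ has the form $rs^{-1}$ with $r\in R$ and $s$ a non-zero-divisor of $R$, so a nonzero $\sigma\delta$-ideal of $R_F$ meets $R$ nontrivially; similarly $K^\sigma=k^\sigma$ and $\mathrm{Quot}(R_F)=K$ are routine.

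The one place where your reduction is close to circular is the ``standard fact'' that the invariants of $\mathrm{Gal}_{\sigma\delta}(R_F/F)$ in $K$ are exactly $F$: that fact \emph{is} the $F=k$ instance of direction (a) of the correspondence, and the paper itself derives it as a \emph{consequence} of Theorem~\ref{correspondence} (see the sentence immediately following the statement). To avoid circularity you must prove it independently for an arbitrary simple $\sigma\delta$-ring with $\delta$-closed $\sigma$-constants --- the usual route is via the ideal structure of $R_F\otimes_F R_F$ (or the torsor isomorphism after a base change), showing that an element of $K$ fixed by every $\sigma\delta$-$F$-automorphism already lies in $F$; this is the genuinely substantive step and is not a one-liner. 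Likewise, the Chevalley direction as you describe it is a program rather than a proof: you still owe (i) an argument that $\mathrm{Gal}_{\sigma\delta}(K/F)$ is a \emph{differential algebraic} subgroup (which needs $F$ to be finitely $\delta$-generated over $k$, or an argument through the $\delta$-Hopf algebra $C\{G\}$), and (ii) the realization of the Chevalley representation inside tensor constructions on $R$ compatibly with the idempotent decomposition. None of this is wrong, but as written the proposal defers exactly the two points that carry the weight of the theorem.
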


This implies in particular that an element $r\in K$ is left fixed by all of $\mathrm{Gal}_{\sigma\delta}(K/k)$ if and only if $r\in k$. The following result is a difference-differential analogue of \cite[Cor.~1.38]{vanderput-singer:2003}, and is proved similarly.

\begin{lem} \label{sfin} Suppose that $R$ is a $\sigma\delta$-PV ring over $k$ with total ring of fractions $K$. Let $G=\mathrm{Gal}_{\sigma\delta}(R/k)$ and suppose that $C=k^\sigma$ is $\delta$-closed. For any $z\in K$, the following properties are equivalent.
\begin{enumerate}
\item[(i)] $z\in R$.
\item[(ii)] The $C$-linear span $C\langle Gz\rangle$ of the orbit $Gz=\{\gamma(z) \ | \ \gamma\in G\}$ is finite dimensional as a $C$-vector space.
\item[(iii)] The $k$-linear span $k\langle \sigma^i(z)\rangle_{i\in \mathbb{Z}}$ of the orbit $\{\sigma^i(z) \ | \ i\in\mathbb{Z}\}$ is finite dimensional as a $k$-vector space.
\end{enumerate}\end{lem}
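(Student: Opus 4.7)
My plan is to establish the cycle of implications $(i)\Rightarrow(iii)\Rightarrow(ii)\Rightarrow(i)$, adapting the proof of \cite[Cor.~1.38]{vanderput-singer:2003} with $\sigma$ playing the role played there by $\delta$. For $(i)\Rightarrow(iii)$, I would filter $R=k\{Z,1/\det(Z)\}_\delta$ by the finite-dimensional $k$-subspaces $R_{N,d}$ spanned by monomials of total degree at most $d$ in the generators $\{\delta^i z_{jk}:i\leq N\}$ together with powers $(\det Z)^{-r}$ for $r\leq d$. Applying $\sigma\delta=\delta\sigma$ to $\sigma(Z)=AZ$ yields
\begin{equation*}
\sigma(\delta^i Z)=\delta^i(AZ)=\sum_{j=0}^i\binom{i}{j}\delta^{i-j}(A)\,\delta^j(Z),
\end{equation*}
so $\sigma(\delta^i z_{jk})$ lies in the $k$-linear span of $\{\delta^l z_{mn}:l\leq i\}$. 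The symmetric calculation from $\sigma^{-1}(Z)=\sigma^{-1}(A^{-1})Z$ handles $\sigma^{-1}$, and $\sigma((\det Z)^{-1})=\det(A)^{-1}(\det Z)^{-1}$ preserves the degree in $(\det Z)^{-1}$. Hence each $R_{N,d}$ is a finite-dimensional $k$-subspace stable under both $\sigma$ and $\sigma^{-1}$, and any $z\in R$ lies in some such $R_{N,d}$.

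For $(iii)\Rightarrow(ii)$, suppose $n:=\dim_k k\langle\sigma^i z\rangle_{i\in\mathbb{Z}}<\infty$, so that $z$ satisfies a monic linear $\sigma$-equation $\mathcal{L}(z)=0$ with $\mathcal{L}\in k[\sigma]$ of order $n$. Each $\gamma\in G$ commutes with $\sigma$ and fixes the coefficients of $\mathcal{L}$, so $\mathcal{L}(\gamma(z))=0$ as well. The solution set $\{y\in K:\mathcal{L}(y)=0\}$ is a module over $K^\sigma=C$ (by Theorem~\ref{nnc}) of $C$-dimension at most $n$, by the standard $K/k$-linear independence / Casoratian argument for difference equations, and it contains the entire $G$-orbit of $z$. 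Therefore $C\langle Gz\rangle$ is finite-dimensional.

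For $(ii)\Rightarrow(i)$, which I expect to be the principal obstacle, I would introduce the set
\begin{equation*}
\mathcal{R}:=\{f\in K:\dim_C C\langle Gf\rangle<\infty\}
\end{equation*}
and verify that $\mathcal{R}$ is a $\sigma\delta$-$k$-subalgebra of $K$: closure under sum and product uses that $V_1+V_2$ and $V_1V_2$ remain finite-dimensional; closure under $\sigma^{\pm1}$ follows because $\sigma$ fixes $C$ and commutes with $G$, yielding $C\langle G\sigma^{\pm1}f\rangle=\sigma^{\pm1}(V)$; closure under $\delta$ uses that $C$ is $\delta$-stable (since $\sigma(\delta c)=\delta(\sigma c)=\delta c$ for $c\in C$) together with the product-rule containment $\delta(Gf)\subseteq V+C\langle\delta v_1,\ldots,\delta v_m\rangle$ for any $C$-basis $v_1,\ldots,v_m$ of $V=C\langle Gf\rangle$. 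The inclusion $R\subseteq\mathcal{R}$ is just the already-established $(i)\Rightarrow(ii)$. The core difficulty is the reverse inclusion $\mathcal{R}\subseteq R$: in the purely differential case of \cite{vanderput-singer:2003} this follows from the geometry of the $G$-torsor $\mathrm{Spec}(R)$, but here $R$ need not be a domain and $K$ is a finite product of fields. My plan is to decompose $R=R_1\oplus\cdots\oplus R_t$ into $\delta$-integral domains cyclically permuted by $\sigma$ (the $\sigma\delta$-analogue of the structure theorem \cite[Cor.~1.16]{vanderput-singer:1997}), note that the stabilizer $G^\circ\leq G$ of each $R_i$ has index $t$, and reduce to the analogous statement inside each domain factor $R_i\subset K_i=\mathrm{Frac}(R_i)$, with $G^\circ$ playing the role of the full Galois group and the domain case settled by direct translation of the argument from \cite[Cor.~1.38]{vanderput-singer:2003}.
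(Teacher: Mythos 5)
Your implications (i)$\Rightarrow$(iii) and (iii)$\Rightarrow$(ii) are sound. The filtration of $R$ by the finite-dimensional, $\sigma^{\pm1}$-stable spaces $R_{N,d}$ is a legitimate and more elementary route to (iii) than the paper's, which instead proves (i)$\Rightarrow$(ii) via the torsor isomorphism $\tilde{k}\otimes_kR\simeq\tilde{k}\otimes_CC\{G\}$ and Cassidy's finiteness theorem for orbits in $C\{G\}$. Your (iii)$\Rightarrow$(ii) is the easy direction of the correspondence between finite-dimensional $\sigma$-stable spaces and linear difference equations (one needs a relation of minimal width to get invertible extreme coefficients, and the bound $\dim_C\leq n$ on the solution space in $K$ is \cite[Lem.~A.2]{hendriks-singer:1999}); this is fine.

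The genuine gap is (ii)$\Rightarrow$(i), which you correctly flag as the principal obstacle but then do not prove. Deferring to a ``direct translation of the argument from \cite[Cor.~1.38]{vanderput-singer:2003}'' hides the two facts that actually carry the weight. First, the route in \emph{loc.~cit.} for getting from the orbit condition back into $R$ passes through condition (iii): one needs the Galois-descent statement that a finite-dimensional $G$-stable $C$-subspace of $K$ is the solution space of a homogeneous linear difference equation over $k$ (this is \cite[Lem.~A.6]{hendriks-singer:1999}, and it is exactly how the paper proves (ii)$\Rightarrow$(iii)); once you have that, your auxiliary subalgebra $\mathcal{R}$ and the decomposition into domains buy you nothing. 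Second, the final descent into $R$ is a conductor-ideal argument: for $W=k\langle\sigma^i(z)\rangle_{i\in\mathbb{Z}}$ finite-dimensional, the ideal $I=\{a\in R \mid aW\subseteq R\}$ is nonzero (clear denominators in the total ring of fractions) and $\sigma$-stable, and $R$ is $\sigma$-simple by \cite[Cor.~6.22]{hardouin-singer:2008} --- a nontrivial input, since a priori $R$ is only simple as a $\sigma\delta$-ring --- so $1\in I$ and $W\subseteq R$. This argument works directly in the total ring of fractions, so no reduction to domain factors is needed. As written, your proposal establishes (i)$\Leftrightarrow$(iii) and (iii)$\Rightarrow$(ii) but never returns from (ii) to (i); the fix is to replace your planned (ii)$\Rightarrow$(i) by (ii)$\Rightarrow$(iii) via \cite[Lem.~A.6]{hendriks-singer:1999} and close the cycle with the conductor-ideal argument for (iii)$\Rightarrow$(i).
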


\begin{proof} (i)$\Rightarrow$(ii). By \cite[Prop.~6.24]{hardouin-singer:2008} and \cite[Rem.~4.37]{hardouin:2016}, $R$ is the coordinate ring of a $G$-torsor over $k$, which implies that there is a $\delta$-field extension $\tilde{k}$ of $k$ and a $G$-equivariant isomorphism (where $\tilde{k}$ is endowed with the trivial $G$-action) of $\tilde{k}$-algebras $\tilde{k}\otimes_kR\simeq \tilde{k}\otimes_C C\{G\}$, where $C\{G\}$ denotes the $\delta$-Hopf algebra of coordinate functions on $G$ \cite[Def.~5]{ovchinnikov:2008}. It follows from \cite[Prop.~10]{cassidy:1972} (see also \cite[Prop.~2.2.3]{minchenko-ovchinnikov-singer:2013b}) that the $G$-orbit of any element of $C\{G\}$ spans a finite dimensional vector space over $C$. This property is inherited by $\tilde{k}\otimes_CC\{G\}$ and also by $R$.

(ii)$\Rightarrow$(iii). It follows from the proof of \cite[Lem.~A.6]{hendriks-singer:1999}, with our $K$ replacing the $R$ in the statement of that Lemma, that $C\langle Gz\rangle$ is the solution space of a homogeneous linear difference equation over $k$.

(iii)$\Rightarrow$(i). Let $W=k\langle \sigma^i(z)\rangle_{i\in \mathbb{Z}}$, and let $I$ be the ideal of $R$ consisting of elements $a\in R$ such that $aW\subset R$. Since $W$ is $k$-finite dimensional, $I$ is non-zero, because for any finite collection $r_1,\dots,r_m\in K$ there exists a nonzero $a\in R$ such that $ar_1,\dots, ar_m\in R$. We claim that $I$ is a $\sigma$-ideal of $R$: if $a\in I$ and $w\in W$, then $\sigma^{-1}(\sigma(a)w)=a\sigma^{-1}(w)\in R$. Since $\sigma$ is an automorphism of $R$, this implies that $\sigma(a)w\in R$, and therefore $\sigma(a)\in I$. By \cite[Cor.~6.22]{hardouin-singer:2008}, $R$ is $\sigma$-simple, and therefore $1\in I$, which implies that $W\subset R$ and in particular $z\in R$.
\end{proof}

The following result relates the $\sigma\delta$-PV rings and $\sigma\delta$-Galois groups of \cite{hardouin-singer:2008} to the $\sigma$-PV rings and $\sigma$-Galois groups considered in \cite{vanderput-singer:1997, hendriks:1998}.

\begin{prop} \label{dense} (Cf.~\cite[Prop.~2.8]{hardouin-singer:2008}) Assume $k^\sigma$ is $\delta$-closed. Let $R$ be a $\sigma\delta$-PV ring over $k$ for \eqref{difeq1} with fundamental solution matrix $Z\in\mathrm{GL}_n(R)$, and let $S=k[Z,1/\mathrm{det}(Z)]\subset R$. Then:
\begin{enumerate}
\item[(i)] $S$ is a $\sigma$-PV ring over $k$ for \eqref{difeq1}; and
\item[(ii)] $\mathrm{Gal}_{\sigma\delta}(R/k)$ is Zariski-dense in the $\sigma$-Galois group $\mathrm{Gal}_\sigma(S/k)$.
\end{enumerate}
\end{prop}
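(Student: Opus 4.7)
The plan for part (i) is to verify the three conditions of Definition~\ref{pv-ring-def} specialized to the $\sigma$-only case. Two of these---$\sigma(Z)=AZ$ and $S=k[Z,1/\mathrm{det}(Z)]$---are immediate from the construction of $S$, so the substance of (i) lies in showing that $S$ is $\sigma$-simple. Because $k^\sigma$ is $\delta$-closed, it is in particular algebraically closed, so by the theory of \cite{vanderput-singer:1997} a $\sigma$-PV ring $S'$ over $k$ for \eqref{difeq1} exists and is unique up to $\sigma$-$k$-isomorphism. For any maximal $\sigma$-ideal $\mathfrak{m}\subset S$, the quotient $S/\mathfrak{m}$ is a $\sigma$-simple $\sigma$-$k$-algebra in which the image of $Z$ remains a fundamental solution matrix (since $1/\mathrm{det}(Z)\in S$ descends to a unit), so $S/\mathfrak{m}$ is itself a $\sigma$-PV ring and hence isomorphic to $S'$. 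To conclude $\mathfrak{m}=0$, I plan to extend $\delta$ to $S/\mathfrak{m}$ (using the $\delta$-closedness of $k^\sigma$ to handle any obstruction), construct a $\sigma\delta$-PV ring $R'$ over $k$ on top of $S/\mathfrak{m}$, and then appeal to the uniqueness clause of Theorem~\ref{nnc} to produce an isomorphism $R\cong R'$ compatible with the quotient map $S\twoheadrightarrow S/\mathfrak{m}$; since this isomorphism is injective on $S\subset R$, we get $\mathfrak{m}=0$.

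For part (ii), I would first note that for any $\gamma\in\mathrm{Gal}_{\sigma\delta}(R/k)$ the relation $\gamma(Z)=Z\,T_\gamma$ with $T_\gamma\in\mathrm{GL}_n(k^\sigma)$ shows that $\gamma$ restricts to a $\sigma$-$k$-automorphism of $S$, yielding a natural injection $\mathrm{Gal}_{\sigma\delta}(R/k)\hookrightarrow\mathrm{Gal}_\sigma(S/k)$ which, in the $Z$-representation, realizes both groups as subgroups of the same $\mathrm{GL}_n(k^\sigma)$. Let $\bar G$ denote the Zariski closure of $\mathrm{Gal}_{\sigma\delta}(R/k)$ in $\mathrm{Gal}_\sigma(S/k)$. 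By the Galois correspondence for $\sigma$-PV rings \cite{vanderput-singer:1997}, the equality $\bar G=\mathrm{Gal}_\sigma(S/k)$ is equivalent to $S^{\bar G}=k$. For each fixed $s\in S$, the condition $\gamma(s)=s$ cuts out a Zariski-closed subset of $\mathrm{GL}_n(k^\sigma)$ (since the Galois action is polynomial in the matrix entries of $\gamma$), so $S^{\bar G}=S^{\mathrm{Gal}_{\sigma\delta}(R/k)}$. Theorem~\ref{correspondence} applied to the full $\sigma\delta$-PV Galois group gives $K^{\mathrm{Gal}_{\sigma\delta}(K/k)}=k$, and therefore $S^{\mathrm{Gal}_{\sigma\delta}(R/k)}\subseteq S\cap k=k$, from which the density follows.

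The hard part is the construction in (i) of a $\sigma\delta$-PV ring $R'$ over $k$ that contains $S/\mathfrak{m}$ as its $k[Z,1/\mathrm{det}(Z)]$-part: one must choose a $\delta$-structure on $S/\mathfrak{m}$ compatible with the $\sigma$-structure, and then extend to a $\sigma\delta$-simple $\sigma\delta$-$k$-algebra with the correct fundamental solution matrix, without corrupting either the simplicity or the match with $R$. Once (i) is in place, (ii) is essentially bookkeeping that combines the two Galois correspondences.
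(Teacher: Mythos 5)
The paper offers no proof of this proposition: it is quoted from \cite[Prop.~2.8]{hardouin-singer:2008}, so I am judging your write-up on its own terms. Your part (ii) is essentially the standard argument and is sound: $\gamma(Z)=ZT_\gamma$ with $T_\gamma\in\mathrm{GL}_n(k^\sigma)$ shows that each $\gamma$ preserves $S$, fixing a given element is a Zariski-closed condition on $T_\gamma$, and Theorem~\ref{correspondence} gives $K^{G}=k$, hence $S^{G}=S\cap k=k$. The one point to tighten is that the $\sigma$-Galois correspondence of \cite{vanderput-singer:1997} detects proper closed subgroups via the \emph{total quotient ring} of $S$ rather than $S$ itself; your closed-condition observation applies verbatim to fractions $p/q$ with $p,q\in S$, but you then need to know that nonzero divisors of $S$ remain nonzero divisors in $R$ (or run the whole argument inside the total quotient ring of $S$). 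This is a repairable bookkeeping issue.

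Part (i) contains a genuine gap, and it sits exactly where you flag ``the hard part.'' Your reduction is fine: a maximal $\sigma$-ideal $\mathfrak{m}\subset S$ yields a quotient $S/\mathfrak{m}$ that is a $\sigma$-PV ring, hence isomorphic to the unique one since $k^\sigma$ is algebraically closed. But the step meant to force $\mathfrak{m}=0$ is precisely the step you do not carry out, and there are two concrete obstacles. First, $S=k[Z,1/\mathrm{det}(Z)]$ is \emph{not} a $\delta$-subring of $R$ (the derivatives $\delta(z_{ij})$ generally lie outside it), so there is no derivation on $S$ to descend to $S/\mathfrak{m}$; a compatible $\delta$-structure on an extension of $S/\mathfrak{m}$ must be built from scratch, and this construction --- done in \cite{hardouin-singer:2008} via prolongations, realizing $R$ as a direct limit of $\sigma$-PV rings $S_i=k[Z,\delta(Z),\dots,\delta^i(Z),1/\mathrm{det}(Z)]$ for the prolonged systems --- is the actual mathematical content of the proposition. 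Second, even granting such an $R'$, the uniqueness isomorphism $R\simeq R'$ from Theorem~\ref{nnc} sends $Z$ to $\bar{Z}D$ for some unknown $D\in\mathrm{GL}_n(k^\sigma)$ and is not ``compatible with the quotient map''; what you obtain is only an abstract $k$-algebra isomorphism $S\simeq S/\mathfrak{m}$, from which $\mathfrak{m}=0$ follows only after an additional argument (e.g., a surjective endomorphism of the Noetherian ring $S$ is injective). The second issue is easily patched; the first is a missing construction, not a detail, so as written the proof of (i) is incomplete.
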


The following result characterizes those difference equations whose $\sigma\delta$-Galois groups are $\delta$-constant.

\begin{prop} \label{dconst} (Cf.~\cite[Prop.~2.9]{hardouin-singer:2008}) Let $R$ be a $\sigma\delta$-PV ring over $k$ for $\sigma(Y)=AY$, where $A\in\mathrm{GL}_n(k)$ and $k^\sigma$ is $\delta$-closed. Then $\mathrm{Gal}_{\sigma\delta}(R/k)$ is a $\delta$-constant linear differential algebraic group if and only if there exists a matrix $B\in\mathfrak{gl}_n(k)$ such that $$\sigma(B)=ABA^{-1} +\delta(A)A^{-1}.$$ In this case, there exists a fundamental solution matrix $Z\in\mathrm{GL}_n(R)$ that satisfies the system $$ \begin{cases} \sigma(Z)  =AZ; \\ \delta(Z) = BZ.\end{cases}$$
\end{prop}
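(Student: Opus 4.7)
My plan is to prove both implications, with the explicit fundamental solution matrix emerging naturally from the converse direction. For the forward direction, assume $\mathrm{Gal}_{\sigma\delta}(R/k)$ is $\delta$-constant; by definition there exists $P \in \mathrm{GL}_n(k^\sigma)$ conjugating the Galois representation into $\mathrm{GL}_n(k^{\sigma\delta})$, and after replacing the given fundamental solution matrix $Z$ by $ZP^{-1}$ (still a fundamental solution matrix, since the entries of $P^{-1}$ are $\sigma$-constants in $k$), I may assume that $\delta(T_\gamma) = 0$ for every $\gamma \in G$. I then define $B := \delta(Z) Z^{-1} \in \mathfrak{gl}_n(R)$. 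A direct calculation using $\gamma(Z) = ZT_\gamma$ and $\delta(T_\gamma)=0$ shows that $\gamma(B) = B$ for every $\gamma \in G$, so the Galois correspondence (Theorem~\ref{correspondence}) places the entries of $B$ in $k$. Applying $\sigma$ to the identity $B = \delta(Z) Z^{-1}$ and using $\sigma \circ \delta = \delta \circ \sigma$ together with $\sigma(Z)=AZ$ yields $\sigma(B) = \delta(AZ)(AZ)^{-1} = \delta(A)A^{-1} + ABA^{-1}$, as required; by construction this $Z$ already satisfies $\delta(Z) = BZ$.

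For the converse, assume we are given $B \in \mathfrak{gl}_n(k)$ satisfying the functional equation. Starting from any fundamental solution matrix $Z$ of $R$, set $B' := \delta(Z) Z^{-1} \in \mathfrak{gl}_n(R)$; the same computation as above shows $\sigma(B') = \delta(A)A^{-1} + AB'A^{-1}$, so $\sigma(B'-B) = A(B'-B)A^{-1}$. Conjugating by $Z$ makes $N := Z^{-1}(B'-B) Z$ fixed by $\sigma$, so $N \in \mathfrak{gl}_n(R^\sigma) = \mathfrak{gl}_n(k^\sigma)$ by Theorem~\ref{nnc}. The system $\delta(U) = -NU$ together with $\det(U) \neq 0$ can be rephrased as a consistent system of $\delta$-polynomial equations (adjoining an auxiliary variable for $\det(U)^{-1}$) with a solution in a suitable $\delta$-field extension, so by $\delta$-closedness of $k^\sigma$ there is a solution $U \in \mathrm{GL}_n(k^\sigma)$. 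Then $\tilde Z := ZU$ is another fundamental solution matrix for $R$ (since $U$ has $\sigma$-constant entries in $k$, the two matrices generate the same $\delta$-$k$-algebra), and a direct calculation gives $\delta(\tilde Z) = B' \tilde Z + Z\delta(U) = B' \tilde Z - (B'-B)\tilde Z = B \tilde Z$. For any $\gamma \in G$ with $\gamma(\tilde Z) = \tilde Z T'_\gamma$, applying $\delta$ to both sides and using $\delta(\tilde Z) = B\tilde Z$ with $B \in \mathfrak{gl}_n(k)$ forces $\tilde Z \delta(T'_\gamma) = 0$, whence $\delta(T'_\gamma) = 0$; thus $G \subseteq \mathrm{GL}_n(k^{\sigma\delta})$ in this basis, so $G$ is $\delta$-constant.

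The substantive point is the converse, where the existence of the change-of-basis matrix $U \in \mathrm{GL}_n(k^\sigma)$ is essential and relies decisively on the $\delta$-closedness hypothesis on $k^\sigma$; everything else reduces to bookkeeping with the commutation $\sigma\circ\delta = \delta\circ\sigma$ and the identification $K^G = k$ of $G$-fixed elements of the total ring of fractions with elements of $k$.
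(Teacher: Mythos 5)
Your argument is correct, and it is essentially the standard proof of the cited result: the paper itself gives no proof of Proposition~\ref{dconst}, merely referring to \cite[Prop.~2.9]{hardouin-singer:2008}, whose proof runs exactly along your lines (the entries of $\delta(Z)Z^{-1}$ are $G$-invariant, hence lie in $k$ by the Galois correspondence; conversely, $Z^{-1}(B'-B)Z$ is $\sigma$-invariant, hence lies in $\mathfrak{gl}_n(k^\sigma)$ by Theorem~\ref{nnc}, and $\delta$-closedness supplies the constant gauge matrix $U$). Both directions, including the existence of the integrable fundamental matrix $\tilde Z=ZU$, are handled correctly.
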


The following result is proved in \cite[Prop.~3.1]{hardouin-singer:2008}.

\begin{prop}\label{diffsum} Let $R$ be a $\sigma\delta$-$k$-algebra with $R^\sigma=k^\sigma$. Let $b_1,\dots,b_m\in k$ and $z_1,\dots,z_m\in R$ satisfy $$\sigma(z_i)-z_i=b_i; \quad i=1,\dots,m.$$ Then $z_1,\dots,z_m$ are differentially dependent over $k$ if and only if there exists a nonzero homogeneous linear differential polynomial $\mathcal{L}(Y_1,\dots,Y_m)$ with coefficients in $k^\sigma$ and an element $f\in k$ such that $$\mathcal{L}(b_1,\dots,b_m)=\sigma(f)-f.$$\end{prop}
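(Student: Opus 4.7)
The plan is to handle the two implications separately, with the reverse direction being a direct calculation and the forward direction requiring a Galois-theoretic argument based on Cassidy's classification of $\delta$-subgroups of $\mathbb{G}_a^m$.

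For the reverse direction, suppose $\mathcal{L}(Y_1,\ldots,Y_m)=\sum_{i,j}c_{ij}\delta^j(Y_i)$ with $c_{ij}\in k^\sigma$ and $\mathcal{L}(b_1,\ldots,b_m)=\sigma(f)-f$. I would compute
\[
\sigma(\mathcal{L}(z_1,\ldots,z_m))-\mathcal{L}(z_1,\ldots,z_m)=\mathcal{L}(b_1,\ldots,b_m)=\sigma(f)-f,
\]
using that $\sigma$ commutes with $\delta$ and that each $c_{ij}$ is $\sigma$-fixed. Hence $\mathcal{L}(z_1,\ldots,z_m)-f$ is $\sigma$-invariant and, by the hypothesis $R^\sigma=k^\sigma$, lies in $k$. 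Since $\mathcal{L}$ is nonzero, the resulting identity $\mathcal{L}(z_1,\ldots,z_m)\in k$ is a nontrivial $\delta$-algebraic relation among the $z_i$'s over $k$.

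For the forward direction, my strategy is to transport the problem into the Picard-Vessiot framework where Proposition~\ref{classification} can be invoked. I would construct a $\sigma\delta$-PV ring $R_0/k$ for the block-diagonal system $\sigma(Y)=AY$ whose diagonal blocks are the unipotent matrices $\bigl(\begin{smallmatrix}1&b_i\\0&1\end{smallmatrix}\bigr)$; its fundamental solution matrix yields elements $u_1,\ldots,u_m\in R_0$ with $\sigma(u_i)-u_i=b_i$, and its Galois group $G$ embeds into $\mathbb{G}_a(k^\sigma)^m$. By Proposition~\ref{classification}, $G$ is cut out by a family of linear $\delta$-polynomials $\mathcal{L}_\alpha$ over $k^\sigma$, and a direct check shows that each $\mathcal{L}_\alpha(u_1,\ldots,u_m)$ is pointwise $G$-invariant. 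By the Galois correspondence (Theorem~\ref{correspondence}), each $\mathcal{L}_\alpha(u_1,\ldots,u_m)$ lies in $k$; conversely, $\delta$-dependence of $u_1,\ldots,u_m$ over $k$ forces $G$ to be a proper $\delta$-subgroup of $\mathbb{G}_a^m$, producing such a nonzero $\mathcal{L}_\alpha$, and the calculation of the reverse direction then extracts the required pair $(\mathcal{L},f)$.

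The remaining step is to transfer this conclusion from $u_i\in R_0$ back to the given $z_i\in R$. I would embed both rings into a common $\sigma\delta$-$k$-algebra in which the ring of $\sigma$-constants is still $k^\sigma$; in such an overalgebra each difference $z_i-u_i$ is $\sigma$-invariant, hence lies in $k^\sigma\subseteq k$, and $\delta$-dependence of the $z_i$'s over $k$ becomes equivalent to $\delta$-dependence of the $u_i$'s over $k$. The main obstacle will be the construction of this common overalgebra: the tensor product $R\otimes_k R_0$ need not have $k^\sigma$ as its ring of $\sigma$-constants, so one must quotient by a carefully chosen $\sigma\delta$-ideal that keeps the image of $R_0$ $\sigma\delta$-simple while simultaneously not collapsing $R$ or $R_0$, in analogy with the existence argument for $\sigma\delta$-PV rings behind Theorem~\ref{nnc}.
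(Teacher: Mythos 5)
First, note that the paper does not prove this statement itself: it is quoted from \cite[Prop.~3.1]{hardouin-singer:2008}, so the only comparison available is with that reference's argument, which your proposal largely reproduces. Your reverse direction is exactly right, and the core of your forward direction is the standard one: build the $\sigma\delta$-PV ring $R_0$ for the block-unipotent system, identify its Galois group $G$ with a $\delta$-subgroup of $\mathbb{G}_a(k^\sigma)^m$ cut out by linear $\delta$-polynomials (Cassidy), observe that each such $\mathcal{L}_\alpha(u_1,\dots,u_m)$ is $G$-invariant and hence lies in $k$ by Theorem~\ref{correspondence}, and use the torsor/$\delta$-dimension fact to see that $\delta$-dependence of the $u_i$ forces $G$ to be proper. (Two small caveats there: Proposition~\ref{classification} as stated only covers $\mathbb{G}_a(C)$, so you need the several-variable version of Cassidy's result; and the implication ``$u_i$ $\delta$-dependent $\Rightarrow$ $G$ proper'' is exactly the place where \cite[Prop.~6.26]{hardouin-singer:2008} or an equivalent dimension count must be invoked, not just asserted.)

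The genuine gap is the transfer step, which you flag as ``the main obstacle'' but do not close, and which as formulated would not go through. You ask for a $\sigma\delta$-simple quotient of $R\otimes_k R_0$ with $\sigma$-constants $k^\sigma$ that collapses neither factor. But $R$ is an arbitrary $\sigma\delta$-$k$-algebra: a maximal $\sigma\delta$-ideal of the tensor product can perfectly well meet $R\otimes 1$ nontrivially, and nothing guarantees the existence of one that avoids every nonzero $\sigma\delta$-ideal of $R$; moreover, the statement ``a $\sigma\delta$-simple $k$-algebra has $\sigma$-constants $k^\sigma$'' requires finite $\delta$-generation over $k$ (together with $\delta$-closedness of $k^\sigma$), which fails for $R\otimes_k R_0$ when $R$ is arbitrary. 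Both difficulties evaporate once you notice that you do not need $R$ to embed anywhere: a $\delta$-polynomial relation among the $z_i$ over $k$ survives any $\sigma\delta$-$k$-algebra homomorphism. So replace $R$ by $U=k\{z_1,\dots,z_m\}_\delta$ and quotient $U$ by a maximal $\sigma\delta$-ideal --- no tensor product is needed. The quotient is $\sigma\delta$-simple, finitely $\delta$-generated over $k$, and contains a fundamental solution matrix of determinant $1$ for the block-unipotent system, so it is itself a $\sigma\delta$-PV ring for that system; by uniqueness (Theorem~\ref{nnc}) it is $k$-$\sigma\delta$-isomorphic to $R_0$, and under this isomorphism the image of $z_i$ differs from $u_i$ by an element of $k^\sigma$. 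This transfers the dependence to the $u_i$ and completes the argument. With that replacement your proof is correct and is essentially the proof in \cite{hardouin-singer:2008}.
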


The following notion defined in \cite[Def.~2.3]{chen-singer:2012} will be crucial in several proofs in this paper.

\begin{defn} (Discrete residues) Consider the $\sigma$-field $k=C(x)$, where $k^\sigma=C$ is algebraically closed and $\sigma(x)=x+1$. For any $\beta\in C$, we call the subset $[\beta]=\beta+\mathbb{Z}\subset  C$ the $\mathbb{Z}$-orbit of $\alpha$ in $C$. Any $f\in k$ can be decomposed into the form $$f=p+\sum_{i=1}^m\sum_{j=1}^{n_i}\sum_{\ell=0}^{d_{i,j}}\frac{\alpha_{i,j,\ell}}{(x-(\beta_i+\ell))^j},$$ where $p\in C[x]$, $m,n_i,d_{i,j}\in\mathbb{N}$, $\alpha_{i,j,\ell},\beta_i\in C$, and the $\beta_i$ belong to different $\mathbb{Z}$-orbits. We define the \emph{discrete residue} of $f$ at the $\mathbb{Z}$-orbit $[\beta_i]$ of multiplicity $j$ (with respect to $x$) as: $$\mathrm{dres}_x(f,[\beta_i],j)=\sum_{\ell=0}^{d_{i,j}}\alpha_{i,j,\ell}.$$
\end{defn}

The usefulness of the notion of discrete residue stems from the following result.

\begin{prop}\label{indsum}(Cf.~\cite[Prop.~2.5]{chen-singer:2012}) Let $k=C(x)$ be the $\sigma$-field defined by $k^\sigma=C$ and $\sigma(x)=x+1$, and let $f,g\in C[x]$ be non-zero, relatively prime polynomials. There exists $h\in k$ such that $\sigma(h)-h=f/g$ if and only if $\mathrm{dres}_x(f/g,[\beta],j)=0$ for every multiplicity $j$ and every $\mathbb{Z}$-orbit $[\beta]$ such that $\beta\in \bar{C}$ satisfies $g(\beta)=0$.
\end{prop}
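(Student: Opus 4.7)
The plan is to reduce the question to an explicit computation on the partial fraction decomposition of $f/g$, by exploiting the fact that the operator $\sigma - \mathrm{id}$ acts in a transparent way on the building blocks $1/(x-\alpha)^j$ and preserves the partition of $\bar C$ into $\mathbb{Z}$-orbits.

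First I would write $f/g$ in the normal form given in the Definition, namely $f/g = p + \sum_{i,j,\ell}\alpha_{i,j,\ell}/(x-(\beta_i+\ell))^j$, with the $\beta_i$ in distinct $\mathbb{Z}$-orbits. The polynomial part is always in the image of $\sigma-\mathrm{id}$: the map $\sigma-\mathrm{id}\colon C[x]\to C[x]$ is surjective by an easy induction on degree (any monomial $x^n$ equals $\sigma(q)-q$ for some $q\in C[x]$ of degree $n+1$, and one clears lower-order terms inductively). Hence, after subtracting a suitable $\sigma(q)-q$, we may assume $p=0$. Also, since the orbits $[\beta_i]$ are disjoint, the poles of the rational terms coming from different orbits cannot cancel one another under $\sigma-\mathrm{id}$ (the operator preserves each orbit and each multiplicity $j$), so the problem decouples into one subproblem per pair $([\beta_i],j)$.

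Next I would compute directly that if $h = \sum_\ell \gamma_\ell/(x-(\beta_i+\ell))^j$, then
\[
\sigma(h) - h = \sum_\ell \frac{\gamma_{\ell+1}-\gamma_\ell}{(x-(\beta_i+\ell))^j},
\]
using $\sigma(1/(x-\alpha)^j) = 1/(x-(\alpha-1))^j$ and reindexing. This is the key identity. For the necessity direction, it gives immediately that the discrete residue at $([\beta_i],j)$ of $\sigma(h)-h$ equals the telescoping sum $\sum_\ell(\gamma_{\ell+1}-\gamma_\ell) = 0$, since only finitely many $\gamma_\ell$ are nonzero; summing over all orbits and multiplicities shows that every discrete residue of $\sigma(h)-h$ vanishes. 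For the sufficiency direction, given that $\sum_\ell \alpha_{i,j,\ell}=0$ for each pair $(i,j)$, I would solve the difference equation $\gamma_{\ell+1}-\gamma_\ell=\alpha_{i,j,\ell}$ by partial summation, setting $\gamma_0 = 0$ and $\gamma_L = \sum_{\ell=0}^{L-1}\alpha_{i,j,\ell}$ for $L\geq 1$ and $\gamma_{-L} = -\sum_{\ell=-L}^{-1}\alpha_{i,j,\ell}$ for $L\geq 1$. The vanishing of the discrete residue together with the finite support of $\{\alpha_{i,j,\ell}\}$ ensures that only finitely many $\gamma_\ell$ are nonzero, so the resulting $h_{i,j}\in k$ is an honest rational function; summing $h_{i,j}$ over all $(i,j)$ and adding the polynomial $q$ produces the desired $h$.

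The proof is essentially a bookkeeping exercise once one has the explicit formula for $\sigma-\mathrm{id}$ on partial fractions; the only point requiring any care is ensuring that the orbit-by-orbit decomposition is legitimate, which uses the hypothesis that distinct orbits $[\beta_i]$ are disjoint (so that the poles of the terms associated to different orbits genuinely lie in disjoint subsets of $\bar C$, and no cancellation among them is possible) combined with $\mathrm{gcd}(f,g)=1$ so that no spurious poles are introduced. Once this is in place, both directions of the equivalence reduce to the single observation that the image of the shift operator $\gamma_\ell\mapsto\gamma_{\ell+1}-\gamma_\ell$ on finitely supported sequences consists precisely of those sequences with zero sum.
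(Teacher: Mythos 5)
Your argument is correct and complete: the paper does not prove this proposition but imports it from Chen--Singer (Prop.~2.5 of that reference), and your proof --- reduce to the partial-fraction normal form, kill the polynomial part by surjectivity of $\sigma-\mathrm{id}$ on $C[x]$, decouple orbit-by-orbit and multiplicity-by-multiplicity using $\sigma(1/(x-\alpha)^j)=1/(x-(\alpha-1))^j$, and observe that the map $\gamma_\ell\mapsto\gamma_{\ell+1}-\gamma_\ell$ on finitely supported sequences has image exactly the zero-sum sequences --- is precisely the standard argument given there. No gaps.
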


The following result is a variant of \cite[Cor.~3.3]{hardouin-singer:2008} and \cite[Thm.~4.2]{hardouin:2008}.

\begin{cor} \label{diffprod} Let $C$ be a $\delta$-closed field, and consider $k=C(x)$ as a $\sigma\delta$-field by letting $\delta(x)=1$ and $\sigma$ be the $C$-linear automorphism of $k$ given by $\sigma(x)=x+1$. Let $R$ be a $\sigma\delta$-$k$-algebra with $R^\sigma=k^\sigma=C$. Let $a_1,\dots,a_m\in C^\delta(x)$ and $z_1,\dots,z_m\in R$, all nonzero, such that $$\sigma(z_i)=a_iz_i; \quad i=1,\dots,m.$$ Then $z_1,\dots,z_m$ are differentially dependent over $k$ if and only if there exist integers $n_1,\dots,n_m\in\mathbb{Z}$, not all zero, and an element $f\in k$ such that \begin{equation*} n_1\frac{\delta (a_1)}{a_1}+\dots+n_m\frac{\delta (a_m)}{a_m}=\sigma(f)-f.\end{equation*}\end{cor}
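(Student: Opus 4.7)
The plan is to reduce this multiplicative question to the additive setting of Proposition~\ref{diffsum} via logarithmic derivatives, and then to exploit the very restricted structure of the $b_i:=\delta(a_i)/a_i$ (imposed by the hypothesis $a_i\in C^\delta(x)$) in order to refine the conclusion of Proposition~\ref{diffsum} from a general $C$-linear $\delta$-polynomial relation to a $\mathbb{Z}$-linear relation without derivatives.

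I would first introduce $u_i:=\delta(z_i)/z_i$ in an appropriate localization of $R$ in which each $z_i$ is invertible, and compute directly from $\sigma(z_i)=a_iz_i$ and $\sigma\delta=\delta\sigma$ that $\sigma(u_i)-u_i=b_i$, so that the $u_i$ and $b_i$ satisfy the hypotheses of Proposition~\ref{diffsum}. I would then observe that $z_1,\dots,z_m$ are $\delta$-dependent over $k$ if and only if $u_1,\dots,u_m$ are, because each $u_i$ lies in the $\delta$-field generated over $k$ by $z_i$ while each $z_i$ is $\delta$-algebraic over the $\delta$-field generated over $k$ by the $u_i$ (since it satisfies the first-order linear $\delta$-equation $\delta(Y)=u_iY$), so these two $\delta$-fields have the same $\delta$-transcendence degree over $k$. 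Proposition~\ref{diffsum} then translates the problem into the following: the $u_i$ are $\delta$-dependent over $k$ if and only if there exist a nonzero homogeneous linear $\delta$-polynomial $\mathcal{L}=\sum_{i,j}c_{i,j}\delta^j(Y_i)$ with $c_{i,j}\in C$ and an $f\in k$ satisfying $\mathcal{L}(b_1,\dots,b_m)=\sigma(f)-f$. The $(\Leftarrow)$ direction of the corollary follows immediately by taking $\mathcal{L}=\sum_i n_iY_i$.

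The main obstacle is the $(\Rightarrow)$ direction, which requires descending from a general $C$-linear $\delta$-polynomial witness to a $\mathbb{Z}$-linear combination of the $b_i$'s with no derivatives. Here I would use the hypothesis $a_i\in C^\delta(x)$ to factor $a_i=c_i\prod_\alpha(x-\alpha)^{e_{i,\alpha}}$ with $c_i\in(C^\delta)^\times$ and $e_{i,\alpha}\in\mathbb{Z}$, whence $b_i=\sum_\alpha e_{i,\alpha}/(x-\alpha)$ has only simple poles. The identity $\delta^j\bigl(1/(x-\alpha)\bigr)=(-1)^jj!/(x-\alpha)^{j+1}$ shows that $\mathcal{L}(b_1,\dots,b_m)$ decomposes cleanly by pole order, and the discrete-residue criterion of Proposition~\ref{indsum} then forces $\sum_i c_{i,j}E_{i,[\beta]}=0$ for every $j$ and every $\mathbb{Z}$-orbit $[\beta]$, where $E_{i,[\beta]}:=\sum_{\alpha\in[\beta]}e_{i,\alpha}\in\mathbb{Z}$. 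Choosing some $j_0$ for which the $c_{i,j_0}$ are not all zero yields a nontrivial $C$-linear relation among the integer-valued vectors $(E_{i,[\beta]})_{[\beta]}$ for $i=1,\dots,m$; because these vectors have integer entries, the $C$-linear relation descends to a $\mathbb{Z}$-linear relation, producing integers $n_1,\dots,n_m$ not all zero with $\sum_i n_iE_{i,[\beta]}=0$ for all $[\beta]$. Applying Proposition~\ref{indsum} in reverse to the simple-pole rational function $\sum_i n_ib_i$ then produces the desired $f\in k$.
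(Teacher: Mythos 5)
Your proof is correct and follows essentially the same route as the paper: reduce to Proposition~\ref{diffsum} via logarithmic derivatives, then use the discrete-residue criterion of Proposition~\ref{indsum} together with the integrality of $\mathrm{dres}_x\bigl(\tfrac{\delta(a_i)}{a_i},[\beta],1\bigr)$ to descend from a $C$-linear $\delta$-polynomial witness to a $\mathbb{Z}$-linear combination of the $\tfrac{\delta(a_i)}{a_i}$ with no derivatives. The only differences are cosmetic: you spell out the equivalence between $\delta$-dependence of the $z_i$ and of their logarithmic derivatives (which the paper leaves implicit), and you extract the integer relation from an arbitrary level $j_0$ with not all $c_{i,j_0}$ zero, whereas the paper uses the top-order coefficients $c_{i,r}$.
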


\begin{proof} Since $\sigma(\frac{\delta(z_i)}{z_i})=\frac{\delta(z_i)}{z_i}+\frac{\delta(a_i)}{a_i}$ for each $i=1,\dots,m$, Proposition~\ref{diffsum} implies that the $z_i$ are differentially dependent over $k$ if and only if there exists an element $f\in k$ and a nonzero linear differential polynomial $$\mathcal{L}(Y_1,\dots,Y_m)=\sum_{i=1}^m\sum_{j=0}^{r_i} c_{i,j}\delta^jY_i, \quad c_{i,j}\in C,$$ such that \begin{equation}\label{ord1rel}g=\mathcal{L}\left(\frac{\delta(a_1)}{a_1},\dots,\frac{\delta(a_m)}{a_m}\right)=\sigma(f)-f.\end{equation}

Let $r=\mathrm{max}\{r_i \ | \ c_{i,r_i}\neq 0 \ \text{for some} \ i\}$. For each $\beta\in C$, it follows from \eqref{ord1rel} and Proposition~\ref{indsum} that $$\mathrm{dres}_x(g,[\beta],r+1)=(-1)^rr!\sum_{i=1}^m c_{i,r}\mathrm{dres}_x\left(\frac{\delta(a_i)}{a_i},[\beta],1\right)=0.$$ Since $\mathrm{dres}_x(\tfrac{\delta(a_i)}{a_i},[\beta],1)\in\mathbb{Z}$ for each $\beta\in C$ and each $i=1,\dots,m$, we may take the $c_{i,r}=n_i$ to be integers, and the conclusion follows from another application of Proposition~\ref{indsum}.
\end{proof}

The following result will be used later to assume that the $\sigma\delta$-Galois groups that we wish to compute are connected, after replacing $\sigma$ with $\sigma^t$ for some $t\in\mathbb{N}$.

\begin{prop} \label{idemp} Let $R$ be a $\sigma\delta$-PV ring over $k$ for \eqref{difeq1}, where $k=C(x)$, $\delta(x)=1$, $C$ is $\delta$-closed, and $\sigma$ is the $C$-linear automorphism of $k$ given by $\sigma:x\mapsto x+1$. There exist idempotents $e_0,\dots,e_{t-1}\in R$ such that:
\begin{enumerate}
\item[(i)] $R=R_0\oplus\dots\oplus R_{t-1}$, where $R_i= e_i R$;
\item[(ii)] the action of $\sigma$ permutes the set $\{ R_0,\dots,R_{t-1}\}$ transitively, and each $R_i$ is left invariant by $\sigma^t$;
\item[(iii)] each $R_i$ is a domain, and is a $\sigma^t\delta$-PV ring over $k$ for $\sigma^t(Y)=A_tY$, where $A_t=\sigma^{t-1}(A)\dots\sigma(A)A$;
\item[(iv)] the $\sigma^t\delta$-Galois group $\mathrm{Gal}_{\sigma^t\delta}(R_0/k)$ is identified with the connected component of the identity $\mathrm{Gal}_{\sigma\delta}(R/k)^0$.
\end{enumerate}
\end{prop}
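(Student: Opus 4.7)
The plan is to extract the idempotents from the $\sigma$-PV subring $S=k[Z,1/\det(Z)]\subseteq R$ guaranteed by Proposition~\ref{dense}, lift them to $R$, and then verify the four claimed properties. The structure theorem for $\sigma$-PV rings over $k$ (\cite[Cor.~1.16, Lem.~1.26]{vanderput-singer:1997}) provides orthogonal idempotents $e_0,\dots,e_{t-1}\in S$, indexed by the connected components of the $\sigma$-Galois group $H=\mathrm{Gal}_\sigma(S/k)$, such that $S=\bigoplus_i e_iS$ with each $e_iS$ an integral domain, $\sigma$ cyclically permuting them, and each $e_iS$ a $\sigma^t$-PV ring over $k$ for $\sigma^t(Y)=A_tY$; in particular $t=[H:H^0]$.

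The first key observation is that any idempotent $e$ of a reduced $\sigma\delta$-ring is automatically a $\delta$-constant: differentiating $e^2=e$ yields $2e\delta(e)=\delta(e)$, and multiplying by $e$ gives $e\delta(e)=0$, hence $\delta(e)=0$. Since $R$ is reduced, each $e_i$ above is a $\delta$-constant in $R$, so the decomposition $R=\bigoplus_i e_iR$ is preserved by both $\sigma$ and $\delta$; setting $R_i:=e_iR$ yields (i), while (ii) follows from the cyclic $\sigma$-action on the $e_i$ already noted in $S$. For (iii), any $\sigma^t\delta$-ideal $I\subseteq R_0$ produces the $\sigma\delta$-ideal $I\oplus\sigma(I)\oplus\cdots\oplus\sigma^{t-1}(I)\subseteq R$, and $\sigma\delta$-simplicity of $R$ forces $I=0$ or $I=R_0$. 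Iterating $\sigma(Z)=AZ$ gives $\sigma^t(Z)=A_tZ$, so $Z_0:=e_0Z\in\mathrm{GL}_n(R_0)$ satisfies $\sigma^t(Z_0)=A_tZ_0$; and projecting the equality $R=k\{Z,1/\det(Z)\}_\delta$ onto $R_0$ gives $R_0=k\{Z_0,1/\det(Z_0)\}_\delta$. Hence $R_0$ is a $\sigma^t\delta$-PV ring for $\sigma^t(Y)=A_tY$; it is moreover a domain because $R_0$ contains the domain $e_0S$ and is obtained from it by adjoining $\delta$-derivatives inside the reduced ring $R$.

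Finally, for (iv), restriction to $R_0$ defines a homomorphism from the stabilizer $G':=\{\gamma\in G\mid\gamma(e_0)=e_0\}$ into $\mathrm{Gal}_{\sigma^t\delta}(R_0/k)$, with inverse extending each $\tau\in\mathrm{Gal}_{\sigma^t\delta}(R_0/k)$ to $R$ by $\tilde\tau(\sigma^j(r)):=\sigma^j(\tau(r))$ for $r\in R_0$ and $0\leq j<t$. It then remains to identify $G'$ with the identity component $G^0$, and this is the main point requiring care. By Proposition~\ref{dense}, $G$ is Zariski-dense in $H$, which gives $[G:G^0]=[H:H^0]=t$ and also implies that the $G$-orbit of $e_0$ coincides with the transitive $H$-orbit (the condition $h(e_0)=e_i$ being Zariski-closed in $h$), so $[G:G']=t$. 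Since the discrete $G$-action on $\{e_0,\dots,e_{t-1}\}$ factors through $G/G^0$, we have $G^0\subseteq G'$, and comparing indices yields $G^0=G'$.
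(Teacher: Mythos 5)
Most of your argument runs parallel to the paper's, which delegates items (i)--(iii) to \cite[Lem.~6.8]{hardouin-singer:2008} and \cite[Lem.~1.26]{vanderput-singer:1997}: your observation that any idempotent is automatically a $\delta$-constant (so the decomposition of the $\sigma$-PV subring $S$ lifts to a $\sigma\delta$-stable decomposition of $R$), your verification that $R_0$ is $\sigma^t\delta$-simple and $\delta$-generated by $e_0Z$ and $e_0/\det(Z)$, and your index-counting proof of (iv) are all sound. Indeed, your proof of (iv) via $[G:G']=t=[G:G^0]$ is a legitimate alternative to the paper's exact-sequence argument, though you should note that the equality $[G:G^0]=[H:H^0]$ is exactly the content of \cite[Cor.~3.7]{minchenko-ovchinnikov-singer:2013b} (Zariski density alone does not formally give it), and that the reason $G^0\subseteq G'$ is that $G'$ is a Kolchin-closed subgroup of finite index.

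The genuine gap is the claim in (iii) that each $R_i$ is a \emph{domain}. Your stated justification --- that $R_0$ ``contains the domain $e_0S$ and is obtained from it by adjoining $\delta$-derivatives inside the reduced ring $R$'' --- is not a valid implication: a reduced ring generated over a domain by further elements need not be a domain (e.g.\ $\mathbb{Q}[y]/(y^2-1)\cong\mathbb{Q}\times\mathbb{Q}$ is reduced and generated over $\mathbb{Q}$ by one element). Even granting that $R_0$ is $\sigma^t\delta$-simple, this only shows $R_0$ is reduced and that $\sigma^t$ permutes its minimal primes transitively; it does not preclude $R_0$ from splitting further into several domains. This is precisely the nontrivial content of \cite[Lem.~6.8(3)]{hardouin-singer:2008}, whose proof rests on the torsor structure of $R$ (as in the proof of Lemma~\ref{sfin} above): one shows that the indecomposable idempotents of $R$ correspond to the connected components of $G$, of which there are exactly $t$ because $G$ is Zariski-dense in $H$, so that $e_0R$ corresponds to a single Kolchin-connected (hence irreducible) component and is therefore a domain. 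Alternatively, since your proof of (iv) does not use the domain property, you could run it first, conclude that $\mathrm{Gal}_{\sigma^t\delta}(R_0/k)=G^0$ is connected, and then invoke the torsor isomorphism $R_0\otimes_k\tilde{k}\simeq\tilde{k}\otimes_C C\{G^0\}$ to deduce that $R_0$ is a domain; but some such input beyond reducedness is unavoidable.
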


\begin{proof} Items (i) and (ii) are proved in \cite[Lem.~6.8(1,3)]{hardouin-singer:2008}. Item (iii) follows from \cite[Lem.~6.8(3)]{hardouin-singer:2008}, Proposition~\ref{dense}, and \cite[Lem.~1.26]{vanderput-singer:1997} (although this last result is proved in the non-differential setting, the same proof works here \emph{mutatis mutandis}). By \cite[Thm.~2.6(2)]{hardouin-singer:2008}, $\mathrm{Gal}_{\sigma^t\delta}(R_0/k)$ is connected, since $R_0$ is a domain. By \cite[Cor.~1.17]{vanderput-singer:1997}, there is an exact sequence $$0\longrightarrow \mathrm{Gal}_{\sigma^t\delta}(R_0/k)\longrightarrow\mathrm{Gal}_{\sigma\delta}(R/k)\longrightarrow\mathbb{Z}/t\mathbb{Z}\longrightarrow 0,$$ which proves (iv) (cf. \cite[Prop.~4.5 and its proof]{vanderput-singer:1997}).
\end{proof}

\section{Hendriks' algorithm} \label{hendriks-sec}

In this section, we summarize the results of \cite{hendriks:1998} that we will need in our algorithm. From now on, we restrict our attention to equations of the form \begin{equation}\label{difeq} \sigma^2(y) + a\sigma(y) + by = 0,\end{equation} where $a,b\in\bar{\mathbb{Q}}(x)$, and $\sigma$ is the $\bar{\mathbb{Q}}$-linear automorphism of $\bar{\mathbb{Q}}(x)$ defined by $\sigma(x)=x+1$. We consider $\bar{\mathbb{Q}}(x)$ as a $\sigma\delta$-field by setting $\delta=\tfrac{d}{dx}$. The matrix equation corresponding to \eqref{difeq} is \begin{equation}\label{mateq} \sigma(Y)=\begin{pmatrix} 0 & 1 \\ -b & -a\end{pmatrix}Y.\end{equation} In order to apply the theory of \cite{hardouin-singer:2008} to study \eqref{difeq}, we will consider \eqref{difeq} as a difference equation over $k=C(x)$, where $C$ is a $\delta$-closed field extension of $(\bar{\mathbb{Q}},\delta)$ (the existence of such a $C$ is guaranteed by \cite{kolchin:1974, trushin:2010}), and the $\sigma\delta$-structure of $k$ extends that of $\bar{\mathbb{Q}}(x)$: $\sigma$ is the $C$-linear automorphism of $k$ defined by $\sigma(x)=x+1$, and $\delta(x)=1$.

Let $R$ denote a $\sigma\delta$-PV ring over $k$ for \eqref{mateq} with fundamental solution matrix $Z$, and let $k[Z,1/\mathrm{det}(Z)]=S\subseteq R$ denote a $\sigma$-PV ring for \eqref{mateq} (cf.~Proposition~\ref{dense}). We will write $G=\mathrm{Gal}_{\sigma\delta}(R/k)$ and $H=\mathrm{Gal}_\sigma(S/k)$. In view of Proposition~\ref{dense}, in order to compute $G$ we will first apply the results of \cite{hendriks:1998} to compute $H$, and then compute the additional $\delta$-algebraic equations that define $G$ as a subgroup of $H$.

The algorithm developed in \cite{hendriks:1998} to compute $H$ proceeds as follows. We first decide whether there exists a solution $u\in\bar{\mathbb{Q}}(x)$ to the \emph{Riccati equation} \begin{equation}\label{ric1} u\sigma(u)+au+b=0.\end{equation} If such a solution $u$ exists, then $H$ is conjugate to an algebraic subgroup of $$\mathbb{G}_m(C)^2\ltimes\mathbb{G}_a(C)\simeq\left\{\begin{pmatrix} \alpha & \beta \\ 0 & \lambda \end{pmatrix} \ \middle| \ \alpha,\beta,\lambda\in C, \ \alpha\lambda\neq 0\right\}.$$ Moreover, if there exist at least two distinct solutions $u, v\in\bar{\mathbb{Q}}(x)$ to \eqref{ric1}, then $H$ is conjugate to an algebraic subgroup of $$\mathbb{G}_m(C)^2\simeq\left\{\begin{pmatrix} \alpha & 0 \\ 0 & \lambda\end{pmatrix} \ \middle| \ \alpha,\lambda\in C, \ \alpha\lambda\neq 0\right\};$$ and if there are at least three distinct solutions in $\bar{\mathbb{Q}}(x)$ to \eqref{ric1}, then $H$ is an algebraic subgroup of $$\mathbb{G}_m(C)\simeq\left\{\begin{pmatrix} \alpha & 0 \\ 0 & \alpha\end{pmatrix} \ \middle| \ \alpha\in C, \ \alpha\neq 0\right\}.$$

If there is no solution $u\in\bar{\mathbb{Q}}(x)$ to the Riccati equation \eqref{ric1}, we then attempt to find $T\in\mathrm{GL}_2(\bar{\mathbb{Q}}(x))$ and $r\in \bar{\mathbb{Q}}(x)$ such that \begin{equation}\label{impequiv}\sigma(T)\begin{pmatrix} 0 & 1 \\ -b & -a\end{pmatrix}T^{-1}=\begin{pmatrix} 0 & 1 \\ -r & 0\end{pmatrix}.\end{equation} If $a=0$ already, then we may take $T=\left(\begin{smallmatrix}1 & 0 \\ 0 & 1\end{smallmatrix}\right)$ and $r=b$. If $a\neq 0$, we then attempt to find a solution $e\in\bar{\mathbb{Q}}(x)$ to the Riccati equation \begin{equation}\label{ric2} e\sigma^2(e) + \bigl(\sigma^2(\tfrac{b}{a}) - \sigma(a) + \tfrac{\sigma(b)}{a}\bigr)e + \tfrac{\sigma(b)b}{a^2}=0.\end{equation} If there exists such a solution $e\in\bar{\mathbb{Q}}(x)$ to \eqref{ric2}, then it is proved in \cite[Thm.~4.6]{hendriks:1998} that there exists a matrix $T\in\mathrm{GL}_2(\bar{\mathbb{Q}}(x))$ such that \eqref{impequiv} is satisfied with $r=-a\sigma(a)+\sigma(b)+a\sigma^2(\tfrac{b}{a})+a\sigma^2(e)$, and $H$ is conjugate to an algebraic subgroup of \begin{equation}\label{dihedral}\{\pm1\}\ltimes\mathbb{G}_m(C)^2\simeq\left\{\begin{pmatrix} \alpha & 0 \\ 0 & \lambda\end{pmatrix} \ \middle| \ \alpha,\lambda\in C, \ \alpha\lambda\neq 0\right\} \cup\left\{\begin{pmatrix} 0 & \beta \\ \epsilon & 0\end{pmatrix} \ \middle| \ \beta,\epsilon\in C, \ \beta\epsilon\neq 0\right\}.\end{equation}

Finally, if $a\neq 0$ and neither \eqref{ric1} nor \eqref{ric2} admits a solution in $\bar{\mathbb{Q}}(x)$, then $\mathrm{SL}_2(C)\subseteq H$, and we compute $H$ as in \cite[\S4.4]{hendriks:1998}.

\begin{rem}[(Descent from $k$ to $\bar{\mathbb{Q}}(x)$)] \label{c-ok} The application of the algorithm of \cite{hendriks:1998} to compute the $\sigma$-Galois group of \eqref{difeq} over $k$, rather than over $\bar{\mathbb{Q}}(x)$, requires some justification. The point is that the explicit steps involved in this algorithm require finding all solutions in $k$ to some polynomial $\sigma$-equations defined over $\bar{\mathbb{Q}}(x)$. If there exist solutions to these $\sigma$-equations in $k$, then there also exist solutions in $\bar{\mathbb{Q}}(x)$.

This follows from an elementary argument: suppose that a given polynomial $\sigma$-equation over $\bar{\mathbb{Q}}(x)$ admits a solution $\frac{p}{q}\in C(x)$, where $p=a_nx^n+\dots+a_ix+a_0$ and $q=b_mx^m+\dots+b_1x+b_0$. This is equivalent to the coefficients $a_i$ and $b_j$ satisfying a system of polynomial equations defined over $\bar{\mathbb{Q}}$, which defines an affine algebraic variety $V$ over $\bar{\mathbb{Q}}$. Since $\bar{\mathbb{Q}}$ is algebraically closed, if $V(C)$ is nonempty then $V(\bar{\mathbb{Q}})$ is also nonempty. \end{rem}

\section{Diagonalizable groups} \label{diagonalizable-sec} We recall the notation introduced in the previous sections: $k=C(x)$, where $C$ is a $\delta$-closure of $\bar{\mathbb{Q}}$, $\sigma$ denotes the $C$-linear automorphism of $k$ defined by $\sigma(x)=x+1$, and $\delta(x)=1$. Let us first suppose that there exist at least two distinct solutions $u,v\in\bar{\mathbb{Q}}(x)$ to the Riccati equation \eqref{ric1}. Then it is shown in \cite[proof of Thm.~4.2(3)]{hendriks:1998} that \eqref{mateq} is equivalent to $$\sigma(Y)=\begin{pmatrix} u & 0 \\ 0 & v\end{pmatrix}Y.$$ In this case, we compute $G$ with the following result.

\begin{prop} \label{galdiag} Assume that $u,v\in \bar{\mathbb{Q}}(x)$ are both different from $0$, and let $P$ be the $\sigma\delta$-PV ring over $k$ corresponding to the system$$\sigma(Y)=\begin{pmatrix} u & 0 \\ 0 & v\end{pmatrix}Y.$$ Then $\mathrm{Gal}_{\sigma\delta}(P/k)$ is the subgroup of \begin{equation}\label{galdiag-eq}\mathbb{G}_m(C)^2=\left\{\begin{pmatrix} \alpha & 0 \\ 0 & \lambda\end{pmatrix} \ \middle| \ \alpha,\lambda\in C, \ \alpha\lambda\neq 0 \right\}\end{equation} defined by the following conditions on $\alpha$ and $\lambda$.

\begin{enumerate}
\item[(i)] There exist: nonzero elements $f,g\in \bar{\mathbb{Q}}(x)$, a primitive $m$-th root of unity $\zeta_m$ such that $u=\zeta_m\frac{\sigma(f)}{f}$, and a primitive $n$-th root of unity $\zeta_n$ such that $v=\zeta_n\frac{\sigma(g)}{g}$, if and only if $\alpha^m=1$, $\lambda^n=1$, and $\alpha^{eg_m}\lambda^{g_n}=1$ for some positive integer $e$ with $\mathrm{gcd}(m,n,e)=1$, where $g_m=\frac{m}{\mathrm{gcd}(m,n)}$ and $g_n=\frac{n}{\mathrm{gcd}(m,n)}$.
\item[(ii)] There exist: integers $m$ and $n$, not both zero, and a nonzero element $f\in \bar{\mathbb{Q}}(x)$ such that $u^mv^n=\frac{\sigma(f)}{f}$, if and only if $\alpha^m\lambda^n=1$.
\item[(iii)] There exists an element $f\in \bar{\mathbb{Q}}(x)$ such that $\frac{\delta(u)}{u}=\sigma(f)-f$ (resp., such that $\frac{\delta(v)}{v}=\sigma(f)-f$) if and only if $\delta(\alpha)=0$ (resp., $\delta(\lambda)=0$).
\item[(iv)] There exist relatively prime integers $m$ and $n$ and an element $f\in \bar{\mathbb{Q}}(x)$ such that $m\frac{\delta(u)}{u}+n\frac{\delta(v)}{v}=\sigma(f)-f$ if and only if $\delta(\alpha^m\lambda^n)=0$.
\item[(v)] If none of the conditions above are satisfied, then $\mathrm{Gal}_{\sigma\delta}(P/k)=\mathbb{G}_m(C)^2$.
\end{enumerate}
\end{prop}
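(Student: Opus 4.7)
The plan is to apply the Galois correspondence (Theorem~\ref{correspondence}) to each candidate condition on $(\alpha,\lambda)\in G$ separately. Since the system is already diagonal, one can take a fundamental solution matrix of the form $Z=\mathrm{diag}(z_1,z_2)\in\mathrm{GL}_2(P)$ with $\sigma(z_1)=uz_1$ and $\sigma(z_2)=vz_2$, relative to which every $\gamma\in G$ acts as $\gamma(z_i)=\alpha_{i,\gamma}\,z_i$ (with $(\alpha_1,\alpha_2)=(\alpha,\lambda)$ and $(u_1,u_2)=(u,v)$). The key semi-invariants are the monomials $z_1^m z_2^n$ for the multiplicative conditions (i), (ii), and the logarithmic derivatives $m\,\delta(z_1)/z_1+n\,\delta(z_2)/z_2$ for the $\delta$-conditions (iii), (iv); these yield the identities $\sigma(z_1^m z_2^n)=u^m v^n\cdot z_1^m z_2^n$ and $\gamma(z_1^m z_2^n)=\alpha^m\lambda^n\cdot z_1^m z_2^n$, together with $\sigma(\delta(z_i)/z_i)-\delta(z_i)/z_i=\delta(u_i)/u_i$ and $\gamma(\delta(z_i)/z_i)-\delta(z_i)/z_i=\delta(\alpha_i)/\alpha_i$.

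From these identities, item (ii) is immediate: $\alpha^m\lambda^n=1$ on $G$ iff $z_1^m z_2^n\in K^G=k$ (Theorem~\ref{correspondence}), iff there exists $f\in\bar{\mathbb{Q}}(x)^*$ with $\sigma(f)/f=u^m v^n$ (using the descent of Remark~\ref{c-ok}). Items (iii) and (iv) follow by the parallel argument with the logarithmic-derivative identities: $\delta(\alpha^m\lambda^n)=0$ on $G$ iff $m\,\delta(z_1)/z_1+n\,\delta(z_2)/z_2\in K^G=k$, iff there exists $f\in\bar{\mathbb{Q}}(x)$ with $\sigma(f)-f=m\,\delta(u)/u+n\,\delta(v)/v$; item (iii) specializes to $(m,n)\in\{(1,0),(0,1)\}$. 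Item (v) then follows from Proposition~\ref{classification}: every proper $\delta$-algebraic subgroup of $\mathbb{G}_m(C)^2$ is cut out by a condition of the form $\alpha^m\lambda^n=1$ or by a linear $\delta$-polynomial relation on the logarithmic derivative of such a character, and by Corollary~\ref{diffprod} the latter reduces to a $\mathbb{Z}$-linear combination condition of the form treated in (iv), so if none of (i)--(iv) holds then $G=\mathbb{G}_m(C)^2$.

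The main obstacle is the converse direction of item (i). The forward direction combines (ii) with an explicit computation: if $u=\zeta_m\sigma(f)/f$ and $v=\zeta_n\sigma(g)/g$ with $\zeta_m,\zeta_n$ primitive roots of unity of the stated orders, then $u^m=\sigma(f^m)/f^m$ and $v^n=\sigma(g^n)/g^n$ yield $\alpha^m=\lambda^n=1$; the element $w:=(z_1/f)^{ag_m}(z_2/g)^{g_n}$, for an integer $a$ chosen so that $(\zeta_m^{g_m})^a\zeta_n^{g_n}=1$ (possible because both factors are primitive $d$-th roots of unity for $d=\gcd(m,n)$, which forces $\gcd(d,a)=1$), is $\sigma$-invariant, hence in $P^\sigma=C$, so $z_1^{ag_m}z_2^{g_n}\in k$, giving $\alpha^{ag_m}\lambda^{g_n}=1$ with $\gcd(m,n,a)=1$. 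For the converse, $\alpha^m=1$ on $G$ (with $m$ taken as the exact order of $\alpha$) together with (ii) yields only $u^m=\sigma(F)/F$, and the difficulty is to extract an $m$-th root. I would handle this via the explicit structure of $\bar{\mathbb{Q}}(x)^*$: writing $u=c\prod_\beta(x-\beta)^{e_\beta}$, the equation $u^m=\sigma(F)/F$ forces $c^m=1$ and the $\mathbb{Z}$-orbit sums $\sum_{i\in\mathbb{Z}}e_{\beta+i}$ to vanish for each orbit $[\beta]$ (the multiplicative analogue of the criterion underlying Proposition~\ref{indsum}); these vanishing conditions then imply $u=c\cdot\sigma(f)/f$ for some $f\in\bar{\mathbb{Q}}(x)^*$, and minimality of $m$ upgrades $c$ to a primitive $m$-th root of unity. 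The parallel argument gives $v=\zeta_n\sigma(g)/g$, and the relation $\alpha^{eg_m}\lambda^{g_n}=1$ pins down the compatibility parameter $e$ between $\zeta_m$ and $\zeta_n$.
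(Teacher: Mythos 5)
Your proof is correct, and for the genuinely differential items (iii)--(v) it is essentially the paper's argument: apply Theorem~\ref{correspondence} to the logarithmic derivatives $\delta(z_i)/z_i$ and their integer combinations, and invoke Corollary~\ref{diffprod} for (v). The real divergence is in items (i) and (ii). These conditions are purely algebraic (no $\delta$ appears in the defining equations), so the paper disposes of them in one line by citing Hendriks' Lem.~4.4 --- the point being that an algebraic condition holds on $G$ iff it holds on the Zariski closure $H$ (Proposition~\ref{dense}), so the classical difference-Galois computation applies verbatim. You instead reprove these from scratch: the forward direction via the explicit $\sigma$-invariant $w=(z_1/f)^{ag_m}(z_2/g)^{g_n}$ (your verification that $\zeta_m^{g_m}$ and $\zeta_n^{g_n}$ are both primitive $d$-th roots of unity, forcing $\gcd(a,d)=1$ and hence $\gcd(m,n,a)=1$, is exactly what is needed), and the converse via orbit-sums of the divisor of $u$ in $\bar{\mathbb{Q}}(x)^*$ to extract the $m$-th root of $u^m=\sigma(F)/F$ --- this is in substance \cite[Lem.~2.1]{vanderput-singer:1997}, which the paper uses elsewhere (Remark~\ref{diagonalizable-rem}) to make case (i) effective. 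Your route is more self-contained; the paper's is shorter and makes explicit that no new differential phenomena enter in (i)--(ii). One small inaccuracy: in (v) you appeal to Proposition~\ref{classification} for the $\delta$-subgroups of $\mathbb{G}_m(C)^2$, but that proposition as stated only classifies subgroups of $\mathbb{G}_a(C)$ and of $\mathbb{G}_m(C)$, not of the two-dimensional torus. This does not damage the argument, since your fallback --- Corollary~\ref{diffprod} shows $z_1,z_2$ are $\delta$-independent over $k$ when (i)--(iv) all fail, forcing the group to have $\delta$-dimension $2$ and hence equal $\mathbb{G}_m(C)^2$ --- is exactly the paper's justification; just drop the appeal to Proposition~\ref{classification} or replace it with the correct reference for subgroups of tori.
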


\begin{proof} We begin by observing that, if we can find elements $f,g\in k$ witnessing the relations in items (i)--(iv), then we may take $f,g\in \bar{\mathbb{Q}}(x)$, since $u,v\in\bar{\mathbb{Q}}(x)$ (cf. \cite[Lem.~2.4, Lem.~2.5]{hardouin:2008} and Remark~\ref{c-ok}). Note that by Theorem~\ref{nnc}, $P^\sigma=C$.

Items (i) and (ii) are proved in \cite[Lem.~4.4]{hendriks:1998}.

Let $y_1, y_0\in P$ be nonzero elements such that $\sigma(y_1)=uy_1$ and $\sigma(y_0)=vy_0$, and let $g_i=\frac{\delta(y_i)}{y_i}$ for $i=0,1$. Then $$\sigma(g_1)-g_1=\frac{\delta(u)}{u} \qquad \text{and}\qquad \sigma(g_0)-g_0=\frac{\delta(v)}{v}.$$ On the other hand, for every $\gamma\in\mathrm{Gal}_{\sigma\delta}(P/k)$ we have that $\gamma(y_1)=\alpha_\gamma y_1$ and $\gamma(y_0)=\lambda_\gamma y_0$, and therefore $$\gamma(g_1)=g_1+\frac{\delta(\alpha_\gamma)}{\alpha_\gamma} \qquad \text{and}\qquad \gamma(g_0)=g_0+\frac{\delta(\lambda_\gamma)}{\lambda_\gamma}.$$ Hence, it follows from Theorem~\ref{correspondence} that $\delta(\alpha_\gamma)=0$ (resp., $\delta(\lambda_\gamma)=0$) for every $\gamma\in\mathrm{Gal}_{\sigma\delta}(R/k)$ if and only if $g_1\in k$ (resp., $g_0\in k$), which implies (iii).

Moreover, for any integers $m,n\in\mathbb{Z}$ and any $\gamma\in\mathrm{Gal}_{\sigma\delta}(P/k)$ we have that $$\gamma(mg_1+ng_0)=mg_1+ng_0+m\frac{\delta(\alpha_\gamma)}{\alpha_\gamma}+n\frac{\delta(\lambda_\gamma)}{\lambda_\gamma}=mg_1+ng_0+\frac{\delta(\alpha_\gamma^m\lambda_\gamma^n)}{\alpha_\gamma^m\lambda_\gamma^n}.$$ Hence, Theorem~\ref{correspondence} implies that $\delta(\alpha_\gamma^m\lambda_\gamma^n)=0$ for every $\gamma\in\mathrm{Gal}_{\sigma\delta}(P/k)$ if and only if $mg_1+ng_0\in k$, and since $$\sigma(mg_1+ng_0)-(mg_1+ng_0)=m\frac{\delta(u)}{u}+n\frac{\delta(v)}{v},$$ this implies (iv).

Part (v) follows from Corollary~\ref{diffprod}.
\end{proof}

\begin{rem} \label{diagonalizable-rem} To compute the difference-differential Galois group $G$ for \eqref{difeq} when there exist at least two distinct solutions $u,v\in\bar{\mathbb{Q}}(x)$ to the Riccati equation \eqref{ric1}, we proceed as follows. First, compute the \emph{discrete residues} $r_{u,[d]}=\mathrm{dres}_x(\frac{\delta(u)}{u},[d],1)$ and $r_{v,[d]}=\mathrm{dres}_x(\frac{\delta(v)}{v},[d],1)$ at each $\mathbb{Z}$-orbit $[d]$ for $d\in \bar{\mathbb{Q}}$. Observe that $r_{u,[d]},r_{v,[d]}\in\mathbb{Z}$ for every $[d]$.

The following cases all refer to Proposition~\ref{galdiag}. By \cite[Lem.~2.1]{vanderput-singer:1997}, case (i) occurs when: $r_{u,[d]}=r_{v,[d]}=0$ for every $[d]$, $u(\infty)=\zeta_m$, and $v(\infty)=\zeta_n$; the integer $e$ in this case is the smallest positive integer such that $\zeta_m^{eg_m}\zeta_n^{g_n}=1$. By \cite[Lem.~2.1]{vanderput-singer:1997}, case (ii) occurs when: $mr_{u,[d]}+nr_{v,[d]}=0$ for every $[d]$ simultaneously, and $(u^m\cdot v^n)(\infty)=1$. By Proposition~\ref{indsum}, case (iii) occurs when $r_{u,[d]}=0$ (resp., when $r_{v,[d]}=0$) for every $[d]$. By Proposition~\ref{indsum}, case (iv) occurs when there exist nonzero integers $m,n\in\mathbb{Z}$ such that $mr_{u,[d]}+nr_{v,[d]}=0$ for every $[d]$ simultaneously.
\end{rem}

\section{Reducible groups} \label{reducible-sec}
We recall the notation introduced in the previous sections: $k=C(x)$, where $C$ is a $\delta$-closure of $\bar{\mathbb{Q}}$, $\sigma$ denotes the $C$-linear automorphism of $k$ defined by $\sigma(x)=x+1$, and $\delta(x)=1$.

We now proceed to define the additional notation that we will use throughout this section. We will assume that there exists exactly one solution $u\in\bar{\mathbb{Q}}(x)$ to the Riccati equation \eqref{ric1}, so that the $\sigma$-Galois group $H$ for \eqref{difeq} is reducible but not completely reducible, and the difference operator implicit in \eqref{difeq} factors as $$\sigma^2+a\sigma+b=(\sigma-\tfrac{b}{u})\circ(\sigma - u),$$ as we saw in \S\ref{hendriks-sec}.
This means that there is a $C$-basis of solutions $\{y_1,y_2\}$ in any $\sigma\delta$-PV ring $R$ for \eqref{difeq} such that $y_1,y_2\neq 0$ satisfy $\sigma(y_1)=uy_1$ and $\sigma(y_2)-uy_2=y_0$, where $y_0\neq 0$ satisfies $\sigma(y_0)=\tfrac{b}{u}y_0$. A fundamental solution matrix for \eqref{mateq} is given by \begin{equation}\label{fundsol}\begin{pmatrix} y_1 & y_2 \\ \sigma(y_1) & \sigma(y_2)\end{pmatrix}=\begin{pmatrix} y_1 & y_2 \\ uy_1 & uy_2+y_0\end{pmatrix}.\end{equation}

If we now let $A=\left(\begin{smallmatrix} 0 & 1 \\ -b & -a\end{smallmatrix}\right)$, $T=\left(\begin{smallmatrix} 1-u & 1 \\ -u & 1\end{smallmatrix}\right)$, and $v=\tfrac{b}{u}=-\sigma(u)-a$ (since $u$ satisfies \eqref{ric1}), we have that \begin{align*}\sigma(T)AT^{-1}&=\begin{pmatrix} 1-\sigma(u) & 1 \\ -\sigma(u) & 1\end{pmatrix}\begin{pmatrix} 0 & 1 \\ -b & -a \end{pmatrix}\begin{pmatrix}1 & -1 \\ u & 1-u\end{pmatrix} \\ &=\begin{pmatrix}  u -u\sigma(u)-au-b & 1-u-\sigma(u)-a+u\sigma(u)+au+b \\ -u\sigma(u)-au-b & -\sigma(u)-a + u\sigma(u)+au+b\end{pmatrix} \\ &=\begin{pmatrix}u & 1-u+v \\ 0 & v\end{pmatrix}=B.\end{align*} Therefore, the systems \eqref{mateq} and $\sigma(Y)=BY$ are equivalent (in the sense of Definition~\ref{equivalent-def}), and a fundamental solution matrix for the latter system is given by $$\begin{pmatrix} 1-u & 1 \\ -u & 1\end{pmatrix}\begin{pmatrix} y_1 & y_2 \\ uy_1 & uy_2+y_0\end{pmatrix}=\begin{pmatrix} y_1 & y_2+y_0 \\ 0 & y_0\end{pmatrix}.$$

For any $\gamma\in H$, the $\sigma$-Galois group for \eqref{difeq}, we have that \begin{equation}\label{h-emb} \gamma\begin{pmatrix} y_1 & y_2+y_0 \\ 0 & y_0\end{pmatrix}=\begin{pmatrix} y_1 & y_2+y_0 \\ 0 & y_0\end{pmatrix}\begin{pmatrix} \alpha_\gamma & \beta_\gamma \\ 0 & \lambda_\gamma\end{pmatrix} = \begin{pmatrix} \alpha_\gamma y_1 & \beta_\gamma y_1+\lambda_\gamma y_2+\lambda_\gamma y_0 \\ 0 & \lambda_\gamma y_0\end{pmatrix},\end{equation} and therefore the action of $H$ on the solutions is defined by \begin{equation}\label{h-action}\gamma(y_1)=\alpha_\gamma y_1; \qquad \gamma(y_0)=\lambda_\gamma y_0; \qquad \text{and}\qquad \gamma(y_2)=\lambda_\gamma y_2 + \beta_\gamma y_1.\end{equation} It will be convenient to define the auxiliary elements \begin{equation} \label{zw-def} w=\frac{y_0}{uy_1}  \qquad \text{and}\qquad z=\frac{y_2}{y_1},\end{equation} on which $\sigma$ acts via \begin{gather} \label{zw-sigma}  \sigma(w)=\frac{b}{u\sigma(u)}w; \qquad \sigma(z)=z+w, \intertext{and $H$ acts via}  \label{zw-h}   \gamma(w)=\frac{\lambda_\gamma}{\alpha_\gamma}w; \qquad   \gamma(z)=\frac{\lambda_\gamma}{\alpha_\gamma}z+\frac{\beta_\gamma}{\alpha_\gamma}.\end{gather}
We observe that the $\sigma$-PV ring $$S=k[y_1, y_2+y_0, y_0, (y_1y_0)^{-1}]=k[y_1,w, z, (y_1w)^{-1}]$$ and the $\sigma\delta$-PV ring $$R=k\{y_1,y_2+y_0,y_0,(y_1y_0)^{-1}\}_\delta=k\{y_1,w, z, (y_1w)^{-1}\}_\delta.$$

Our computation of the $\sigma\delta$-Galois group $G$ for \eqref{difeq} in this section will be accomplished by studying the action of $G$ on $y_1$, $w$, and $z$. We begin by defining the \emph{unipotent radicals} \begin{equation} \label{uniprad-def} R_u(H)=H\cap\left\{\begin{pmatrix} 1& \beta \\ 0 & 1\end{pmatrix} \ \middle| \ \beta\in C\right\}\qquad\text{and}\qquad R_u(G)=G\cap\left\{\begin{pmatrix} 1& \beta \\ 0 & 1\end{pmatrix} \ \middle| \ \beta\in C\right\},\end{equation} and observe that $R_u(H)$ (resp., $R_u(G)$) is an algebraic (resp., differential algebraic) subgroup of $\mathbb{G}_a(C)$, the additive group of $C$. By \cite[Thm.~4.2(2)]{hendriks:1998}, $R_u(H)=\mathbb{G}_a(C)$ if and only if there exists exactly one solution $u\in k$ to \eqref{ric1}. We observe that $$R_u(G)=\{\gamma\in G \ | \ \gamma(y_i)=y_i \ \text{for} \ i=0,1\}.$$The reductive quotient $$G/R_u(G)\simeq\left\{\begin{pmatrix} \alpha_\gamma & 0 \\ 0 & \lambda_\gamma\end{pmatrix} \ \middle| \ \gamma\in G\right\}$$ is the $\sigma\delta$-Galois group corresponding to the matrix equation \begin{equation}\label{redeq}\sigma(Y)=\begin{pmatrix} u & 0 \\ 0 & v \end{pmatrix}Y,\end{equation} which we compute with Proposition~\ref{galdiag} and Remark~\ref{diagonalizable-rem}.

\begin{rem}[(Reduction to the connected case)] \label{connected-rem} Since $R_u(G)$ is connected, $G$ and $G/R_u(G)$ have the same number $t$ of connected components, and this number $t$ can be read off the description of the reductive quotient $G/R_u(G)$ provided by Proposition~\ref{galdiag}: in case~(i), $t=\mathrm{lcm}(m,n)$ (least common multiple); in case~(ii), $t=\mathrm{gcd}(m,n)$ (greatest common divisor) if $m$ and $n$ are both nonzero, otherwise $t$ coincides with whichever exponent is nonzero; in all other cases of Proposition~\ref{galdiag}, $G/R_u(G)$ is connected and $t=1$.

By Proposition~\ref{idemp}, the connected component of the identity $G^0$ is identified with $\mathrm{Gal}_{\sigma^t\delta}(R_0/k)$, where $R_0$ is a $\sigma^t\delta$-PV ring over the $\sigma^t\delta$-field $k$ for the system $$\sigma^t(Y)=B_tY, \quad \text{where} \ B_t=\sigma^{t-1}(B)\dots\sigma(B)B.$$ For each integer $n\geq 1$, we define the sequences \begin{align*}u_n &=\sigma^{n-1}(u)\dots\sigma(u)u=\sigma^{n-1}(u)u_{n-1} \\ v_n &=\sigma^{n-1}(v)\dots\sigma(v)v=\sigma^{n-1}(v)v_{n-1} \\ f_1 &=1; \ f_{n+1} = \sigma^n(u)f_n+ v_n \\ w_n &=\frac{f_n}{u_n}\cdot\frac{y_0}{y_1}, \end{align*} and observe that $u_1=u$, $v_1=v$, and $w_1=w$. We claim that $$B_n=\begin{pmatrix} u_n & f_n-u_n+v_n \\ 0 & v_n\end{pmatrix}.$$ We proceed by induction: $B=B_1$, and for each $n\geq 1$ we have $$B_{n+1}=\sigma^n(B)B_n =\begin{pmatrix}\sigma^n(u) & 1-\sigma^n(u)+\sigma^n(v) \\ 0 & \sigma^n(v)\end{pmatrix}\begin{pmatrix} u_n & f_n-u_n+v_n \\ 0 & v_n\end{pmatrix} =\begin{pmatrix} u_{n+1} & f_{n+1}-u_{n+1}+v_{n+1} \\ 0 & v_{n+1}\end{pmatrix}.$$ Since $$\sigma^n\left(\begin{pmatrix} y_1 & y_2+y_0 \\ 0 & y_0\end{pmatrix}\right)=B_n\begin{pmatrix} y_1 & y_2+y_0 \\ 0 & y_0\end{pmatrix},$$ we have that $\sigma^n(y_1)=u_ny_1$, $\sigma^n(y_0)=v_ny_0$, and $\sigma^n(y_2)=u_ny_2+f_ny_0$. Letting again $z=\tfrac{y_2}{y_1}$, we have that $$\sigma^n(z)=\frac{\sigma^n(y_2)}{\sigma^n(y_1)}=\frac{u_ny_2+f_ny_0}{u_ny_1}=z+w_n.$$ Hence, after replacing $\sigma$ with $\sigma^t$, $u$ with $u_t$, $v$ with $v_t$, and $w$ with $w_t$, we may assume in the rest of this section that $G$ is connected. Indeed, if we write $\tilde{x}=\frac{x}{t}$, $\tilde{\sigma}=\sigma^t$, and $\tilde{\delta}= t\delta$, then $k=C(\tilde{x})$, $\tilde{\sigma}(\tilde{x})=\tilde{x}+1$, and $\tilde{\delta}(\tilde{x})=1$, whence the replacement of $\sigma$ with $\sigma^t$ is immaterial for our purposes (n.b.: this observation already appears in \cite[Rem.~4.7]{hendriks:1998} and \cite[Rem.(1), p.~242]{hendriks-singer:1999}).
\end{rem}

In the following result, we compute the defining equations for the $\sigma\delta$-Galois group $G$ for \eqref{difeq} in a special case. Recall that $u\in\bar{\mathbb{Q}}(x)$ denotes the unique solution to the Riccati equation \eqref{ric1}, $H$ denotes the $\sigma$-Galois group for \eqref{difeq}, and $w$ is as in \eqref{zw-def}.

\begin{prop} \label{wrat} Suppose that $H$ is a connected subgroup of \begin{equation} \label{bigut}\mathbb{G}_m(C)\ltimes\mathbb{G}_a(C)=\left\{\begin{pmatrix} \alpha & \beta \\ 0 & \alpha\end{pmatrix} \ \middle| \ \alpha,\beta\in C, \ \alpha\neq 0\right\}\end{equation} with $R_u(H)=\mathbb{G}_a(C)$. Then $w\in\bar{\mathbb{Q}}(x)$, and $G$ is the subgroup of $\mathbb{G}_m(C)\ltimes\mathbb{G}_a(C)$ defined by one of the following conditions. \begin{enumerate}
\item[(i)] There exists $h\in\bar{\mathbb{Q}}(x)$ such that $u=\frac{\sigma(h)}{h}$ if and only if $\alpha=1$ and $R_u(G)=\mathbb{G}_a(C)$, i.e., $G=H=\mathbb{G}_a(C)$.
\item[(ii)] Case (i) does not hold and there exists $f\in \bar{\mathbb{Q}}(x)$ such that $\frac{\delta(u)}{u}=\sigma(f)-f$ if and only if $\delta(\alpha)=0$ and $R_u(G)=\mathbb{G}_a(C)$, i.e., $G=\mathbb{G}_m(C^\delta)\ltimes\mathbb{G}_a(C)$.
\item[(iii)] Cases (i) and (ii) do not hold and there exist $g\in \bar{\mathbb{Q}}(x)$ and a linear $\delta$-polynomial $\mathcal{L}\in \bar{\mathbb{Q}}\{Y\}_\delta$ such that $\mathcal{L}(\frac{\delta(u)}{u})-w=\sigma(g)-g$ if and only if $\beta=\alpha\mathcal{L}(\frac{\delta(\alpha)}{\alpha})$, i.e., $G\simeq\mathbb{G}_m(C)$, where the embedding $G\hookrightarrow\mathrm{GL}_2(C)$ associated to the choice of fundamental solution matrix \eqref{fundsol} is given by \begin{equation}\label{nondiagemb}\alpha\mapsto\begin{pmatrix}\alpha & \alpha\mathcal{L}\left(\tfrac{\delta(\alpha)}{\alpha}\right) \\ 0 & \alpha \end{pmatrix}.\end{equation}
\item[(iv)] If none of (i), (ii), or (iii) holds, then $G=H=\mathbb{G}_m(C)\ltimes\mathbb{G}_a(C)$.
\end{enumerate}
\end{prop}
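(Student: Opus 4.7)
The plan is to first show that $w\in\bar{\mathbb{Q}}(x)$, and then to analyze $G$ through its action on the auxiliary elements $y_1$, $g_1=\delta(y_1)/y_1$, and $z=y_2/y_1$ of $R$, using the Galois correspondence to translate each proposed condition on $u$ and $w$ into a condition on $G$, and identifying the algebraic structure of $G$ via Theorem~\ref{rad} and Corollary~\ref{diffprod}.

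First, I would observe that since $H\subseteq\mathbb{G}_m(C)\ltimes\mathbb{G}_a(C)$ has $\alpha_\gamma=\lambda_\gamma$ for every $\gamma\in H$, the element $w$ is fixed by all of $H$ by \eqref{zw-h}, so $w\in K^H=k$ by the $\sigma$-Galois correspondence applied to the $\sigma$-PV subring $S$, and descent as in Remark~\ref{c-ok} then yields $w\in\bar{\mathbb{Q}}(x)$. Direct computation gives $\sigma(g_1)-g_1=\delta(u)/u$ with $\gamma(g_1)=g_1+\delta(\alpha_\gamma)/\alpha_\gamma$, and $\sigma(z)-z=w$ with $\gamma(z)=z+\beta_\gamma/\alpha_\gamma$ for every $\gamma\in G$. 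The three biconditionals in (i)--(iii) then follow from Theorem~\ref{correspondence} together with the fact that $R^\sigma=C$: for instance, $\mathcal{L}(\delta(u)/u)-w=\sigma(g)-g$ with $g\in k$ iff $\mathcal{L}(g_1)-z-g\in R^\sigma=C$ iff $\mathcal{L}(g_1)-z\in k$ iff $\beta_\gamma=\alpha_\gamma\mathcal{L}(\delta(\alpha_\gamma)/\alpha_\gamma)$ for every $\gamma\in G$; the arguments for (i) and (ii) are entirely analogous, starting from $y_1$ and $g_1$ respectively. In every case descent brings the witnesses (together with the coefficients of $\mathcal{L}$) into $\bar{\mathbb{Q}}$.

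Next, I would identify the image $G_u$ of $G$ under the diagonal projection using Corollary~\ref{diffprod}: applied to $\sigma(y_1)=uy_1$, it shows that $y_1$ is $\delta$-algebraic over $k$ iff $\delta(u)/u\in\sigma(k)-k$, so if cases (i) and (ii) fail, then $G_u=\mathbb{G}_m(C)$. Combined with Theorem~\ref{rad}, which forces $R_u(G)$ to be algebraic and hence equal to either $\{I\}$ or $\mathbb{G}_a(C)=R_u(H)$, this pins down $G$ in cases (i)--(iii): in (i), $G\subseteq\mathbb{G}_a(C)$ and Zariski density in $H$ forces $G=H=\mathbb{G}_a(C)$; in (ii), $R_u(G)=\{I\}$ would produce the Zariski-non-dense subgroup $\mathbb{G}_m(C^\delta)\times\{0\}$ of $H$, contradicting Proposition~\ref{dense}, so $R_u(G)=\mathbb{G}_a(C)$ and $G=\mathbb{G}_m(C^\delta)\ltimes\mathbb{G}_a(C)$; in (iii), the equation $\beta_\gamma=\alpha_\gamma\mathcal{L}(\delta(\alpha_\gamma)/\alpha_\gamma)$ forces $R_u(G)=\{I\}$ (set $\alpha_\gamma=1$) while $G$ projects isomorphically onto $\mathbb{G}_m(C)$ via \eqref{nondiagemb}.

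The main obstacle will be case (iv), where I must rule out $R_u(G)=\{I\}$. Suppose for contradiction that $R_u(G)$ is trivial; then the projection $G\to G_u=\mathbb{G}_m(C)$ is an isomorphism and $G$ is the graph of a $\delta$-polynomial group homomorphism $\phi\colon\mathbb{G}_m(C)\to\mathbb{G}_a(C)$. The semi-direct product multiplication $(\alpha_1,\beta_1)(\alpha_2,\beta_2)=(\alpha_1\alpha_2,\alpha_1\beta_2+\beta_1\alpha_2)$ forces the map $\psi(\alpha)=\phi(\alpha)/\alpha$ to be an additive homomorphism of differential algebraic groups $\mathbb{G}_m(C)\to\mathbb{G}_a(C)$. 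By the Cassidy-style classification of such morphisms, drawing on Proposition~\ref{classification}, one shows $\psi(\alpha)=\mathcal{L}(\delta(\alpha)/\alpha)$ for some linear $\delta$-polynomial $\mathcal{L}$, whence $\beta_\gamma=\alpha_\gamma\mathcal{L}(\delta(\alpha_\gamma)/\alpha_\gamma)$ for every $\gamma\in G$; by the second paragraph this places us in case (iii), contradicting the assumption. Therefore $R_u(G)=\mathbb{G}_a(C)$ and $G=\mathbb{G}_m(C)\ltimes\mathbb{G}_a(C)=H$.
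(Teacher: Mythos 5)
Your translation of conditions (i)--(iii) into statements about $G$ via Theorem~\ref{correspondence}, and your derivation of $w\in\bar{\mathbb{Q}}(x)$, match the paper's argument. But the structural half of the proof has a genuine gap, rooted in a circularity: you invoke Theorem~\ref{rad} to get the dichotomy $R_u(G)\in\{\{I\},\mathbb{G}_a(C)\}$, yet in the paper Theorem~\ref{rad} handles precisely the case $w\in k$ (i.e., the hypothesis of Proposition~\ref{wrat}) by citing Corollary~\ref{wrat-cor}, which is itself extracted from the proof of Proposition~\ref{wrat}. In the setting of this proposition the dichotomy is a \emph{conclusion} of the result you are trying to prove, not an available input. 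This matters in several places: in case (i), Zariski density alone does not force $G=\mathbb{G}_a(C)$, since $\mathbb{G}_a(C^\delta)$ (and more generally $\ker\mathcal{L}_0$ for any nonzero linear $\delta$-polynomial $\mathcal{L}_0$) is a proper Zariski-dense differential algebraic subgroup of $\mathbb{G}_a(C)$; the paper instead argues that $R_u(H)=\mathbb{G}_a(C)$ precludes $\sigma(g)-g=w$ in $k$, so $z$ is $\delta$-transcendental over $k$ (resp.\ over $L$ in case (ii)) and $R_u(G)=\mathbb{G}_a(C)$ follows from [Prop.~3.9(2)] and [Prop.~6.26] of Hardouin--Singer.

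The same gap undermines case (iv). Your graph-of-a-homomorphism argument only excludes $R_u(G)=\{I\}$; without the dichotomy you must also exclude the intermediate possibilities $R_u(G)=\ker\mathcal{L}_0$ with $\{0\}\subsetneq\ker\mathcal{L}_0\subsetneq\mathbb{G}_a(C)$, and in that situation $G$ is not the graph of a homomorphism $\mathbb{G}_m(C)\to\mathbb{G}_a(C)$, so the Cassidy classification (which, incidentally, is a classification of \emph{morphisms} $\mathbb{G}_m\to\mathbb{G}_a$, not of subgroups as in Proposition~\ref{classification}) does not apply directly. This intermediate case is exactly where the paper does its real work: from $R_u(G)\subsetneq\mathbb{G}_a(C)$ it deduces $\mathcal{L}_0(z)\in L$, uses Lemma~\ref{sfin} and the grading $k\{y_1,y_1^{-1}\}_\delta=\bigoplus_n y_1^nP$ to conclude that $z$ and $\delta(y_1)/y_1$ are $\delta$-dependent over $k$, and then runs a discrete-residue computation to manufacture a relation $\mathcal{L}(\delta(u)/u)-w=\sigma(g)-g$ with $\mathcal{L}\in\bar{\mathbb{Q}}\{Y\}_\delta$ and $g\in\bar{\mathbb{Q}}(x)$, landing in case (iii). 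That discrete-residue step also supplies the descent of the coefficients of $\mathcal{L}$ from $C$ to $\bar{\mathbb{Q}}$, which your proposal dismisses in one clause; since the order of $\mathcal{L}$ is not bounded a priori, this descent is not a routine application of Remark~\ref{c-ok}. To repair your proof you would either have to reproduce the paper's case-(iv) analysis (after which Theorem~\ref{rad} is no longer needed) or give an independent, non-circular proof that $R_u(G)$ is algebraic when $w\in k$.
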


\begin{proof} We recall the notation introduced at the beginning of this section: $v=\frac{b}{u}$, $\{y_1,y_2\}$ is a $C$-basis of solutions for \eqref{difeq} such that $\sigma(y_1)=uy_1$ and $\sigma(y_2)-uy_2=y_0$, where $y_0\neq 0$ satisfies $\sigma(y_0)=vy_0$. The embedding $H\hookrightarrow \mathrm{GL}_2(C):\gamma\mapsto T_\gamma$ is as in \eqref{h-emb}, and the action of $H$ on the solutions is given in \eqref{h-action}. The auxiliary elements $w$ and $z$ are defined as in \eqref{zw-def}; they are acted upon by $\sigma$ as in \eqref{zw-sigma} and by $H$ as in \eqref{zw-h}.

The relation $\gamma(w)=\frac{\lambda_\gamma}{\alpha_\gamma}w$ for each $\gamma\in H$ from \eqref{zw-h}, together with Theorem~\ref{correspondence}, imply that $w\in k$. Since $\sigma(w)=\frac{b}{u\sigma(u)}w$ from \eqref{zw-sigma} and $b,u\in\bar{\mathbb{Q}}(x)$, if $w\in k$ we may actually take $w\in \bar{\mathbb{Q}}(x)$ by \cite[Lem.~2.5]{hardouin:2008} (cf.~Remark~\ref{c-ok}).

Part (i) was proved in Proposition~\ref{galdiag}, except for the statement concerning the unipotent radical. In this case, we see that $R_u(H)$ is the $\sigma$-Galois group over $k$ for $\sigma(z)-z=w$. Since $R_u(H)=\mathbb{G}_a(C)$, there is no $g\in k$ such that $\sigma(g)-g=w$, for otherwise this $\sigma$-Galois group would be trivial. Therefore $z$ is $\delta$-transcendental over $k$ by \cite[Prop.~3.9(2)]{hardouin-singer:2008}, which implies that $R_u(G)=\mathbb{G}_a(C)$ by \cite[Prop.~6.26]{hardouin-singer:2008}. This proves (i).

Part (ii) was proved in Proposition~\ref{galdiag}, except for the statement concerning the unipotent radical. If (i) does not hold, then $H=\mathbb{G}_m(C)\ltimes\mathbb{G}_a(C)$ by \cite[Lem.~4.4]{hendriks:1998}. By Proposition~\ref{dense}, $G$ is Zariski-dense in $H$, and therefore $G$ is connected by \cite[Cor.~3.7]{minchenko-ovchinnikov-singer:2013b}. Proposition~\ref{idemp} says that the $\sigma\delta$-PV ring $$k\{y_1,y_0, (y_0y_1)^{-1}\}_\delta=k\{y_1,y_1^{-1}\}_\delta$$ is a domain, and its total ring of fractions $L$ is a field. If there exists $f\in\bar{\mathbb{Q}}(x)$ such that $\frac{\delta(u)}{u}=\sigma(f)-f$, then $L$ consists of $\delta$-algebraic elements over $k$ by \cite[Cor.~3.4(1)]{hardouin-singer:2008}. If $\gamma\in R_u(H)$, then $\gamma(z)=z+\beta_\gamma$ by \eqref{zw-h}. Since $\sigma(z)-z=w$ and the unipotent radical of $H$ is $\mathbb{G}_a(C)$, there is no $g\in k$ such that $w=\sigma(g)-g$, whence by \cite[Prop.~3.9(2)]{hardouin-singer:2008} $z$ must be $\delta$-transcendental over $k$. Therefore, $z$ is also $\delta$-transcendental over $L$, since $L$ consists of $\delta$-algebraic elements over $k$, which implies that $R_u(G)=\mathbb{G}_a(C)$. This proves (ii).

Since $$\gamma\left(\mathcal{L}\left(\frac{\delta(y_1)}{y_1}\right)-z\right)=\mathcal{L}\left(\frac{\delta(y_1)}{y_1}\right)-z + \left[\mathcal{L}\left(\frac{\delta(\alpha_\gamma)}{\alpha_\gamma}\right)-\frac{\beta_\gamma}{\alpha_\gamma}\right]$$ for any linear $\delta$-polynomial $\mathcal{L}\in C\{Y\}_\delta$ and $\gamma\in G$, Theorem~\ref{correspondence} implies that $\mathcal{L}(\frac{\delta(\alpha_\gamma)}{\alpha_\gamma})=\frac{\beta_\gamma}{\alpha_\gamma}$ for every $\gamma\in G$ if and only if \begin{equation}\label{wrat-2} \mathcal{L}\left(\frac{\delta(y_1)}{y_1}\right)-z=g\in k.\end{equation} Applying $\sigma-1$ to each side of \eqref{wrat-2}, we obtain \begin{equation} \label{wrat-2l}  \mathcal{L}\left(\frac{\delta(u)}{u}\right)-w=\sigma(g)-g.\end{equation} If $\mathcal{L}\in\bar{\mathbb{Q}}\{Y\}_\delta$, then the left-hand side of \eqref{wrat-2l} belongs to $\bar{\mathbb{Q}}(x)$, and therefore we may take $g\in\bar{\mathbb{Q}}(x)$ by \cite[Lem.~2.4]{hardouin:2008} (cf.~Remark~\ref{c-ok}). It is clear that if $\beta_\gamma=\alpha_\gamma\mathcal{L}(\frac{\delta(\alpha_\gamma)}{\alpha_\gamma})$ for every $\gamma\in G$, then $\beta_\gamma=0$ whenever $\alpha_\gamma=1$, so $R_u(G)=\{0\}$. This proves (iii).

Now suppose that neither (i) nor (ii) holds, i.e., there is no $f\in k$ such that $\frac{\delta(u)}{u}=\sigma(f)-f$. Then it follows from Corollary~\ref{diffprod} that $y_1$ is $\delta$-transcendental over $k$, and therefore $G/R_u(G)\simeq\mathbb{G}_m(C)$. In this case, we have that $G=\mathbb{G}_m(C)\ltimes\mathbb{G}_a(C)$ if and only if $R_u(G)=\mathbb{G}_a(C)$. Hence, to prove (iv) we have to show that if $R_u(G)\subsetneq\mathbb{G}_a(C)$ is a proper subgroup, then there exist $g\in \bar{\mathbb{Q}}(x)$ and a linear $\delta$-polynomial $\mathcal{L}\in \bar{\mathbb{Q}}\{Y\}_\delta$ as in (iii).

Since $w\in k\subset L$, it follows from Proposition~\ref{diffsum} that either $z$ is $\delta$-transcendental over $L$, or else there exist $h\in L$ and a nonzero linear $\delta$-polynomial $\mathcal{L}_0\in C\{Y\}_\delta$ such that $\mathcal{L}_0(w)=\sigma(h)-h$, which occurs if and only if $\mathcal{L}_0(z)\in L$. On the other hand, since $\gamma(\mathcal{L}_0(z))=\mathcal{L}_0(z)+\mathcal{L}_0(\beta_\gamma)$ for each $\gamma\in R_u(G)=\mathrm{Gal}_{\sigma\delta}(R/L)$, Theorem~\ref{correspondence} implies that $z$ is $\delta$-transcendental over $L$ if and only if $R_u(G)=\mathbb{G}_a(C)$. We will show that if \begin{equation}\label{wrat-3}\mathcal{L}_0(w)=\sigma(h)-h\end{equation} for some $h\in L$ and nonzero linear $\delta$-polynomial $\mathcal{L}_0\in C\{Y\}_\delta$, then there exist $g\in \bar{\mathbb{Q}}(x)$ and $\mathcal{L}\in \bar{\mathbb{Q}}\{Y\}_\delta$ as in (iii).

Since $\mathcal{L}_0(w)\in k$, it follows from \eqref{wrat-3} that the $k$-vector space $k\langle\sigma^i(h)\rangle_{i\in\mathbb{Z}}$ is finite dimensional, and therefore Lemma~\ref{sfin} implies that $h$ belongs to the $\sigma\delta$-PV ring $k\{ y_1,y_1^{-1}\}_\delta$. We claim that in fact $$h\in P=k\left\{\frac{\delta(y_1)}{y_1}\right\}_\delta.$$ To see this, let $P(n)=y_1^n\cdot P$ for $n\in\mathbb{Z}$, and observe the decomposition $$k\{y_1,y_1^{-1}\}_\delta=P[y_1,y_1^{-1}]=\bigoplus_{n\in\mathbb{Z}}P(n)$$ into $\sigma$-stable $k$-vector spaces (the sum is direct because $y_1$ is $\delta$-transcendental over $k$, and therefore algebraically transcendental over $P$). Since $\mathcal{L}_0(w)\in k\subset P(0)$, we may assume that $\mathcal{L}_0(z)=h\in P(0)$ also, which implies that $z$ and $\frac{\delta(y_1)}{y_1}$ are $\delta$-dependent over $k$. By Proposition~\ref{diffsum}, there must exist linear $\delta$-polynomials $\mathcal{L}_1,\mathcal{L}_2\in C\{Y\}_\delta$ and $\tilde{g}\in k$ such that \begin{equation}\label{wrat-4}\mathcal{L}_1\left(\frac{\delta(u)}{u}\right)-\mathcal{L}_2(w)=\sigma(\tilde{g})-\tilde{g}.\end{equation}

We will construct a linear $\delta$-polynomial $$\mathcal{L}\in \bar{\mathbb{Q}}\{Y\}_\delta \quad\text{such that}\quad\mathcal{L}\left(\frac{\delta(u)}{u}\right)-w=\sigma(g)-g \quad\text{for some}\quad g\in \bar{\mathbb{Q}}(x).$$ Let $\mathrm{ord}(\mathcal{L}_i)=m_i$ and $\mathcal{L}_i=\sum_{j=0}^{m_i}c_{i,j}\delta^j(Y)$ for $i=1,2$. By Proposition~\ref{indsum}, the existence of $\tilde{g}\in k$ as in \eqref{wrat-4} is equivalent to \begin{equation}\label{wrat-5} 0=\mathrm{dres}_x\left(\mathcal{L}_1\left(\frac{\delta(u)}{u}\right)-\mathcal{L}_2(w), [d],n\right)\end{equation} for every $\mathbb{Z}$-orbit $[d]$ and every $n\in\mathbb{N}$. Let $r\in\mathbb{N}$ be the largest order such that $\mathrm{dres}_x(w,[d],r)\neq 0$ for some orbit $[d]$. Then it follows from \eqref{wrat-5} that, for each orbit $[d]$, the discrete residues \begin{gather*}c_{1,m_2+r-1}(-1)^{m_2+r-1}(m_2+r-1)!\mathrm{dres}_x\left(\frac{\delta(u)}{u},[d],1\right)=\mathrm{dres}_x\left(\mathcal{L}_1\left(\frac{\delta(u)}{u}\right),[d],m_2+r\right) \intertext{and} c_{2,m_2}(-1)^{m_2}\frac{(m_2+r)!}{(r-1)!}\mathrm{dres}_x(w,[d],r)=\mathrm{dres}_x(\mathcal{L}_2(w),[d],m_2+r)\end{gather*} are equal.

We define the leading coefficient $c_{r-1}$ of $\mathcal{L}$ by $$\mathcal{L}=\frac{c_{1,m_2+r-1}}{(m_2+r)c_{2,m_2}}\delta^{r-1}(Y)+\dots$$ and observe that \begin{equation}\label{wrat-6}\mathrm{dres}_x\left(c_{r-1}\delta^{r-1}\left(\frac{\delta(u)}{u}\right)-w,[d],r\right)=0\end{equation} for every $[d]$. Since $u,w\in\bar{\mathbb{Q}}(x)$ and $\mathrm{dres}_x(h,[d],n)$ is $C$-linear in $h$, it follows that the leading coefficient $c_{r-1}\in\bar{\mathbb{Q}}$.

We continue by taking the next highest $r'\leq r-1$ such that $\mathrm{dres}_x(w,[d],r')\neq 0$ for some $[d]$, and proceed as above to find the coefficient $c_{r'-1}\in \bar{\mathbb{Q}}$ of $\mathcal{L}$ such that \eqref{wrat-6} holds with $r'$ in place of $r$. Eventually we will have constructed a linear $\delta$-polynomial $\mathcal{L}\in \bar{\mathbb{Q}}\{Y\}_\delta$ such that $$\mathrm{dres}_x\left(\mathcal{L}\left(\frac{\delta(u)}{u}\right)-w,[d],n\right)=0$$ for each $[d]$ and $n\in\mathbb{N}$, which by Proposition~\ref{indsum} implies that $\mathcal{L}(\frac{\delta(u)}{u})-w=\sigma(g)-g$ for some $g\in k$. By \cite[Lem.~2.4]{hardouin:2008} (cf.~Remark~\ref{c-ok}), we may take $g\in \bar{\mathbb{Q}}(x)$, so we are indeed in case (iii), as we wanted to show.\end{proof}

\begin{rem} The situation described in Proposition~\ref{wrat}(iii) relates to a phenomenon for linear differential algebraic groups that has no analogue in the theory of linear algebraic groups. Namely, the existence of the logarithmic derivative map $\mathbb{G}_m(C)\twoheadrightarrow\mathbb{G}_a(C):\alpha\mapsto\frac{\delta(\alpha)}{\alpha}.$ This is what allows for the embeddings $\mathbb{G}_m(C)\hookrightarrow\mathrm{GL}_2(C)$ described in \eqref{nondiagemb} and the anomalous situation of Proposition~\ref{wrat}(iii) where $R_u(G)=\{0\}$ despite the fact that $R_u(H)=\mathbb{G}_a(C)$. We will see in Proposition~\ref{bigut2} that, if the connected component $H^0$ is not conjugate to a subgroup of $\mathbb{G}_m(C)\ltimes\mathbb{G}_a(C)$ as in \eqref{bigut}, then this phenomenon does not occur and we always have $R_u(G)=R_u(H)$.
\end{rem}

The following observation, made near the beginning of the proof of Proposition~\ref{wrat}, will be useful in the proof of Theorem~\ref{rad}.

\begin{cor} \label{wrat-cor} $H$ is a subgroup of $\mathbb{G}_m(C)\ltimes\mathbb{G}_a(C)$ as in \eqref{bigut} if and only if $w\in k$, where $w$ is defined as in \eqref{zw-def}. In this case, we have that $R_u(G)$ is either $\{0\}$ or $\mathbb{G}_a(C)$.\end{cor}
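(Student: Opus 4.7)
The plan is to prove the equivalence directly from the transformation rule recorded in \eqref{zw-h}, and then settle the dichotomy for $R_u(G)$ by splitting on $R_u(H)$ and appealing to Proposition~\ref{wrat}. For the iff, the key input is $\gamma(w)=(\lambda_\gamma/\alpha_\gamma)w$. If $H\subseteq\mathbb{G}_m(C)\ltimes\mathbb{G}_a(C)$ as in \eqref{bigut}, then $\alpha_\gamma=\lambda_\gamma$ for every $\gamma\in H$, so $w$ is fixed by $H$; by the Galois correspondence (Theorem~\ref{correspondence}), $w\in k$. Conversely, if $w\in k$ then $w$ is fixed by $H$, and since $w\neq 0$ (because $y_0,y_1,u$ are all nonzero), one must have $\lambda_\gamma=\alpha_\gamma$ for all $\gamma\in H$, placing $H$ inside \eqref{bigut}.

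For the claim on $R_u(G)$, assume that $H$ sits in \eqref{bigut} and split based on $R_u(H)$. If $R_u(H)=\{0\}$, then $R_u(G)\subseteq R_u(H)=\{0\}$ automatically, and there is nothing to prove. Otherwise $R_u(H)=\mathbb{G}_a(C)$, and by \cite[Thm.~4.2(2)]{hendriks:1998} the Riccati equation \eqref{ric1} has a unique solution $u\in\bar{\mathbb{Q}}(x)$, so we are exactly in the setting of Section~\ref{reducible-sec}. Since $R_u(G)$ is connected (as noted in Remark~\ref{connected-rem}), it coincides with $R_u(G^0)$, and by that same remark we may reduce to the connected case governed by Proposition~\ref{wrat}. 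That proposition enumerates precisely four mutually exclusive cases for $G$: in (i), (ii), and (iv) one reads off $R_u(G)=\mathbb{G}_a(C)$, whereas in (iii) one has $R_u(G)=\{0\}$, so the dichotomy holds in every instance.

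The subtle point, more than a genuine obstacle, is this second claim: by Proposition~\ref{classification}, $\mathbb{G}_a(C)$ admits a rich family of proper differential algebraic subgroups (one for each nonzero linear $\delta$-polynomial $\mathcal{L}$), any of which could a priori appear as $R_u(G)$. The substantive content of the corollary is that this family collapses under the present hypotheses and only the two extremes occur. That collapse is precisely what the discrete residue argument in the proof of Proposition~\ref{wrat}(iv) achieves, by promoting any relation of the shape $\mathcal{L}_0(w)=\sigma(h)-h$ in the total ring of fractions of $R$ to a relation $\mathcal{L}(\delta(u)/u)-w=\sigma(g)-g$ with $\mathcal{L}\in\bar{\mathbb{Q}}\{Y\}_\delta$ and $g\in\bar{\mathbb{Q}}(x)$, which is exactly case (iii) and forces $R_u(G)=\{0\}$; so the dichotomy is a clean corollary of that analysis.
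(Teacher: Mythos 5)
Your proof is correct and follows essentially the same route as the paper, which presents this corollary as an observation extracted from the proof of Proposition~\ref{wrat}: the equivalence comes from $\gamma(w)=(\lambda_\gamma/\alpha_\gamma)w$ together with the Galois correspondence and Zariski density, and the dichotomy for $R_u(G)$ comes from the exhaustive case analysis of Proposition~\ref{wrat} (after the routine reduction to the connected case via Remark~\ref{connected-rem}). You also correctly locate the real content in the proof of Proposition~\ref{wrat}(iv), where the discrete-residue argument rules out proper nontrivial differential algebraic subgroups of $\mathbb{G}_a(C)$ as possible unipotent radicals.
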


The following theorem is the main theoretical result of this section. The result holds true assuming only that $G$ is the $\sigma\delta$-Galois group for \eqref{difeq}, i.e., there is no hypothesis concerning the existence of solutions $u\in \bar{\mathbb{Q}}(x)$ to the Riccati equations \eqref{ric1} or \eqref{ric2}. However, since $R_u(G)\subseteq R_u(H)$ in general, the only non-trivial case of the theorem occurs when we assume that $R_u(H)\neq \{0\}$. By the results of \cite{hendriks:1998} summarized in \S\ref{hendriks-sec}, this is equivalent to assuming that there exists precisely one solution $u\in\bar{\mathbb{Q}}(x)$ to \eqref{ric1}, so we will keep this assumption throughout the course of the proof of Theorem~\ref{rad}.

\begin{thm}\label{rad} The unipotent radical $R_u(G)$ of the $\sigma\delta$-Galois group $G$ of \eqref{mateq} is algebraic. In other words, $R_u(G)$ is either $\{0\}$ or $\mathbb{G}_a(C)$.
\end{thm}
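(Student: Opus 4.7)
The plan is to split into two cases based on whether $H$ is contained in $\mathbb{G}_m(C)\ltimes\mathbb{G}_a(C)$ as in \eqref{bigut}, or not. In the former case, which by Corollary~\ref{wrat-cor} is equivalent to $w\in k$, the conclusion $R_u(G)\in\{\{0\},\mathbb{G}_a(C)\}$ is already contained in Corollary~\ref{wrat-cor}, so nothing remains to do. The real content is the latter case, where $w\notin k$, and the goal becomes to show $R_u(G)=\mathbb{G}_a(C)$. After reducing to $G$ connected via Remark~\ref{connected-rem}, I would argue by contradiction: by Proposition~\ref{classification}, write $R_u(G)=\ker\mathcal{L}$ for a linear $\delta$-polynomial $\mathcal{L}\in C\{Y\}_\delta$; if $\mathcal{L}$ has order zero then $R_u(G)=\{0\}$ and we are done, so otherwise pick such an $\mathcal{L}$ of minimal order $n\geq 1$ and seek a contradiction.

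The Galois correspondence (Theorem~\ref{correspondence}) yields $\mathcal{L}(z)\in L$, and applying $\sigma-1$ together with Lemma~\ref{sfin} produces $h\in R_{red}$ with $\mathcal{L}(w)=\sigma(h)-h$. Now apply an arbitrary $\gamma\in G_{red}$ and set $t=\chi(\gamma)\in C^*$ with $\chi=\lambda/\alpha$: the left side rewrites as $\mathcal{L}_t(w)$, where $\mathcal{L}_t$ is obtained by Leibniz-expanding each $\delta^j(tw)=\sum_i\binom{j}{i}\delta^i(t)\delta^{j-i}(w)$. The difference $(\mathcal{L}_t-t\mathcal{L})(w)$ is a $\delta$-polynomial in $w$ of order at most $n-1$ lying in $(\sigma-1)R_{red}$, so by the same Galois-correspondence argument $(\mathcal{L}_t-t\mathcal{L})(z)\in L$, whence $(\mathcal{L}_t-t\mathcal{L})$ vanishes on $R_u(G)=\ker\mathcal{L}$. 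Since no nonzero $\delta$-operator of order below $n$ can vanish on $\ker\mathcal{L}$, we conclude $\mathcal{L}_t-t\mathcal{L}\equiv 0$ as a formal $\delta$-operator. The coefficient of $\delta^{n-1}$ in this identity is $nc_n\delta(t)$, forcing $\delta(\chi(\gamma))=0$ for every $\gamma\in G_{red}$, i.e.\ $\chi(G_{red})\subseteq C^\delta$.

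By Proposition~\ref{galdiag}(iv) applied with the coprime integers $-1$ and $1$, the condition $\chi(G_{red})\subseteq C^\delta$ is equivalent to $-\delta(u)/u+\delta(v)/v=\sigma(f)-f$ for some $f\in k$, and this in turn forces $\delta(w)/w=\tilde g+c_0$ for some $\tilde g\in k$ and $c_0\in C$. Hence every $\delta^j(w)$ is a $k$-multiple of $w$, and $\mathcal{L}(w)=Q\cdot w$ for a specific nonzero $Q\in k$ determined by $\mathcal{L}$. A direct inspection showing that the $\chi$-isotypic component of $R_{red}$ equals $k\cdot w$ (any attempt to enlarge it by adjoining, say, $(\delta(y_1)/y_1)\cdot w$ fails because $\gamma(\delta(y_1)/y_1)-\delta(y_1)/y_1=\delta(\alpha_\gamma)/\alpha_\gamma$ does not vanish identically on $G_{red}$ in case B) yields $h=p\cdot w$ for some $p\in k$, and the equation $\mathcal{L}(w)=\sigma(h)-h$ reduces to the twisted first-order $\sigma$-equation $\mu\sigma(p)-p=Q$ in $k$, with $\mu=\sigma(w)/w=v/\sigma(u)$.

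The main obstacle is the final step: to show that the $\sigma$-equation $\mu\sigma(p)-p=Q$ has no solution $p\in k$ given $w\notin k$. The hypothesis $w\notin k$ is precisely the statement that $\mu$ is not a $\sigma$-coboundary of $k^\times$, so the homogeneous equation has no nonzero solution in $k$; the substantive work is to leverage the explicit form of $Q$ as a nonzero $\delta$-polynomial in $\tilde g$ of positive degree to block the inhomogeneous equation. I would complete this by a discrete-residue analysis in the spirit of Proposition~\ref{indsum}, adapted to the twisted operator $p\mapsto\mu\sigma(p)-p$, exploiting the fact that the unique Riccati solution $u$ rigidly controls the pole structure of $\mu$ (and hence of $Q$) to produce a nonvanishing twisted residue on the $Q$-side of any candidate identity, yielding the contradiction and completing the proof.
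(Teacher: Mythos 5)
Your opening moves are sound and track the paper: the case $w\in k$ is indeed disposed of by Corollary~\ref{wrat-cor}, and your derivation of $\delta(\chi(\gamma))=0$ from the identity $\mathcal{L}_t-t\mathcal{L}\equiv 0$ is a correct (and essentially equivalent) reformulation of the normality argument in Lemma~\ref{wdtran}, which in particular settles the case where $w$ is $\delta$-transcendental over $k$. The problems begin after that, in the case $w\notin k$ but $\delta$-algebraic, and they are two. First, the assertion that the $\chi$-isotypic component of the $\sigma\delta$-PV ring for \eqref{redeq} is exactly $k\cdot w$ is not a ``direct inspection'': it presupposes that $y_1$, $w$, and the $\delta^i(\delta(y_1)/y_1)$ generate a ring with a genuine multigrading, which fails when $y_1$ is $\delta$-algebraic over $k$ (the paper must treat this by an entirely different argument, Lemma~\ref{wdalg1}) and also when there is a multiplicative relation $y_1^my_0^n\in k$ as in Proposition~\ref{galdiag}(ii), in which case monomials $y_1^{m_1}w^{m_0}$ with many different exponent pairs transform by $\chi$ and the isotypic component is strictly larger than $k\cdot w$. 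The paper's Lemma~\ref{wdalg2} and Claim~\ref{zspec} exist precisely to navigate these possibilities, and your parenthetical about ``case B'' does not cover them.

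The more serious gap is the endgame, which you yourself flag as the main obstacle. The non-solvability of $\mu\sigma(p)-p=Q$ in $k$ is not a local statement about pole structure that a ``twisted residue'' calculus could certify: for fixed $u,v,b$ it can perfectly well be solvable for suitable constant coefficients $c_j$ of $\mathcal{L}$, and what rules this out is the \emph{global} standing hypothesis that \eqref{ric1} has exactly one solution, i.e.\ that $R_u(H)=\mathbb{G}_a(C)$. The paper converts a putative solution into a contradiction through Lemma~\ref{zdalg}: if $p\in k$ existed, then $\mathcal{L}(z)=pw+c$ would be $\delta$-algebraic over $k$, hence so would $z$; and when $w$ and $z$ are both $\delta$-algebraic over $k$, the element $fz/w$ satisfies a first-order inhomogeneous $\sigma$-equation with $\delta$-algebraic solution, so by \cite[Prop.~3.9(2)]{hardouin-singer:2008} it already has a solution in $k$, forcing $z\in k+kw$ and therefore $R_u(H)=\{0\}$ --- the desired contradiction. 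Without this lemma (or an equivalent), your argument does not close; and the residue machinery of Proposition~\ref{indsum} is built for the untwisted operator $\sigma-1$ and does not extend to $\mu\sigma-1$ in the way you need. I would replace your final step by proving Lemma~\ref{zdalg} first and then observing that any solution $p$ of your twisted equation makes $z$ $\delta$-algebraic over $k$.
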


\begin{proof} By Proposition~\ref{classification}, either $R_u(G)=\mathbb{G}_a(C)$, or else \begin{equation}\label{radop}R_u(G)=\left\{\begin{pmatrix} 1 & \beta \\ 0 & 1\end{pmatrix} \ \middle| \ \beta\in C, \ \mathcal{L}(\beta)=0\right\}.\end{equation} for some nonzero, monic linear $\delta$-polynomial $\mathcal{L}\in C\{Y\}_\delta$. We recall the notation introduced at the beginning of this section: $u\in\bar{\mathbb{Q}}(x)$ is the unique solution in $k$ to the Riccati equation~\eqref{ric1}, $v=\frac{b}{u}$, $\{y_1,y_2\}$ is a $C$-basis of solutions for \eqref{difeq} such that $\sigma(y_1)=uy_1$ and $\sigma(y_2)-uy_2=y_0$, where $y_0\neq 0$ satisfies $\sigma(y_0)=vy_0$. The embedding $G\hookrightarrow \mathrm{GL}_2(C):\gamma\mapsto T_\gamma$ is as in \eqref{h-emb}, and the action of $G$ on the solutions is given in \eqref{h-action}. The auxiliary elements $w$ and $z$ are defined as in \eqref{zw-def}; they are acted upon by $\sigma$ as in \eqref{zw-sigma} and by $G$ as in \eqref{zw-h}.

\begin{lem}\label{wrels} The following are equivalent:
\begin{enumerate}
\item[(i)] $w$ is $\delta$-algebraic over $k$;
\item[(ii)] $\sigma(w)=c\frac{\sigma(f)}{f}w$ for some nonzero $c\in C^\delta$ and $f\in k$;
\item[(iii)] $\frac{\delta(w)}{w}\in k$;
\item[(iv)] $\delta(\alpha_\gamma\lambda_\gamma^{-1})=0$ for every $\gamma\in G$.
\end{enumerate}
\end{lem}

\begin{proof} By \cite[Cor.~3.4(1)]{hardouin-singer:2008}, (i) and (ii) are equivalent. It is clear that (iii) $\Rightarrow$ (i). If $w$ is as in (ii), then $$\sigma\left(\frac{\delta(w)}{w}\right)=\frac{\delta(\sigma(w))}{\sigma(w)}=\frac{\delta(w)}{w}+\sigma\left(\frac{\delta(f)}{f}\right)-\frac{\delta(f)}{f},$$ which implies that $\frac{\delta(w)}{w}-\frac{\delta(f)}{f}=d$ for some $d\in C$. Hence, (ii) $\Rightarrow$ (iii). It follows from \eqref{zw-h} that $$\gamma\left(\frac{\delta(w)}{w}\right)=\frac{\delta(w)}{w}-\frac{\delta(\alpha_\gamma\lambda_\gamma^{-1})}{\alpha_\gamma\lambda_\gamma^{-1}},$$ and therefore Theorem~\ref{correspondence} implies that (iii) and (iv) are equivalent. This concludes the proof of Lemma~\ref{wrels}.\end{proof}

The following result establishes Theorem~\ref{rad} under the supplementary assumption that $w$ is $\delta$-transcendental over $k$.

\begin{lem}\label{wdtran} Suppose that $w$ is $\delta$-transcendental over $k$. Then $R_u(G)$ is either $\{0\}$ or $\mathbb{G}_a(C)$.
\end{lem}

\begin{proof} Since $R_u(G)$ is normal in $G$, this implies that (cf. \cite[Lem.~3.6]{hardouin-singer:2008})$$T_\gamma\begin{pmatrix} 1 & \beta \\ 0 &1\end{pmatrix}T_\gamma^{-1}=\begin{pmatrix}\alpha_\gamma & \tilde{\beta}_\gamma \\ 0 & \lambda_\gamma\end{pmatrix}\begin{pmatrix} 1 & \beta \\ 0 &1\end{pmatrix}\begin{pmatrix}\alpha_\gamma^{-1} & -\alpha_\gamma^{-1}\lambda_\gamma^{-1}\tilde{\beta}_\gamma \\ 0 & \lambda_\gamma^{-1}\end{pmatrix}=\begin{pmatrix} 1 & \alpha_\gamma\lambda_\gamma^{-1}\beta \\ 0 & 1\end{pmatrix}\in R_u(G).$$ If $\mathcal{L}$ is as in \eqref{radop}, then $\mathcal{L}(\beta)=0\Rightarrow\mathcal{L}(\alpha_\gamma\lambda_\gamma^{-1}\beta)=0$ for each $\gamma\in G$ and $\left(\begin{smallmatrix} 1 & \beta \\ 0 & 1\end{smallmatrix}\right)\in R_u(G)$. By \cite[Lem.~3.7]{hardouin-singer:2008}, this implies that if $\mathrm{ord}(\mathcal{L})\neq 0$, then $\delta(\alpha_\gamma\lambda_\gamma^{-1})=0$ for every $\gamma\in G$, contradicting the hypothesis that $w$ is $\delta$-transcendental by Lemma~\ref{wrels}. This concludes the proof of Lemma~\ref{wdtran}. 
\end{proof}

It remains to prove Theorem~\ref{rad} when $w$ is $\delta$-algebraic over $k$. The case where $w\in k$ is treated in Corollary~\ref{wrat-cor}, so we may assume from now on that $w\notin k$. We give different arguments depending on whether $y_1$ is $\delta$-algebraic over $k$ (Lemma~\ref{wdalg1}) or $\delta$-transcendental over $k$ (Lemma~\ref{wdalg2}). We begin with a preliminary result.

\begin{lem}\label{zdalg} If $w$ is $\delta$-algebraic over $k$, then $z$ is $\delta$-transcendental over $k$. 
\end{lem}

\begin{proof} We proceed by contradiction: assuming that $w$ and $z$ are both $\delta$-algebraic over $k$, we will show that $R_u(H)=\{0\}$, contradicting our assumption that there exists exactly one solution $u\in\bar{\mathbb{Q}}(x)$ to the Riccati equation \eqref{ric1}. By Lemma~\ref{wrels}, $\sigma(w)=c\frac{\sigma(f)}{f}w$ for some nonzero $c\in C^\delta$ and $f\in k$. Since $$\sigma\left(\frac{fz}{w}\right)=\frac{\sigma(f)(z+w)}{c\tfrac{\sigma(f)}{f}w}=c^{-1}\left(\frac{fz}{w}\right)+c^{-1}f,$$ the element $w^{-1}fz$ is $\delta$-algebraic over $k$ and satisfies \begin{equation}\label{gosper2} \sigma(Y)-c^{-1}Y=c^{-1}f.\end{equation} Hence, \cite[Prop.~3.9(2)]{hardouin-singer:2008} implies that there exists an element $h\in k$ such that $\sigma(h)-c^{-1}h=c^{-1}f$, and therefore $$\sigma\left(\frac{fz}{w}-h\right)=c^{-1}\left(\frac{fz}{w}-h\right).$$ Hence, $w^{-1}fz-h=dw^{-1}f$ for some $d\in C$, whence $z=hwf^{-1}+df^{-1}$ and $R_u(H)=\{0\}$. This concludes the proof of Lemma~\ref{zdalg}.
\end{proof}

If $\mathcal{L}\in C\{Y\}_\delta$ is a linear $\delta$-polynomial as in \eqref{radop}, then for every $\gamma\in R_u(G)$ we have that $$\gamma\bigl(\mathcal{L}(z)\bigr)=\mathcal{L}(z)+\mathcal{L}(\beta_\gamma)=\mathcal{L}(z),$$ and therefore Theorem~\ref{correspondence} implies that $\mathcal{L}(z)\in L$, the total ring of fractions of the $\sigma\delta$-PV ring $k\{y_1,y_0, (y_1y_0)^{-1}\}_\delta$ for \eqref{redeq}. By Remark~\ref{connected-rem}, we may assume that $G$ is connected, in which case $L$ is a field by Proposition~\ref{idemp}.

\begin{lem}\label{wdalg1} Suppose that $G$ is connected and $w$ and $y_1$ are both $\delta$-algebraic over $k$. Then $R_u(G)=\mathbb{G}_a(C)$.
\end{lem}

\begin{proof}We proceed by contradiction: assuming that $w$ and $y_1$ are both $\delta$-algebraic over $k$ and that $R_u(G)\neq\mathbb{G}_a(C)$ is proper, we will show that $z$ is also $\delta$-algebraic over $k$, contradicting Lemma~\ref{zdalg}. Since $y_0=uwy_1$ is also $\delta$-algebraic over $k$, the ring of fractions $L$ of the $\sigma\delta$-PV ring $k\{y_1,y_0,(y_1y_0)^{-1}\}_\delta$ consists of $\delta$-algebraic elements over $k$. Since $G$ is connected, $L$ is a field. If $R_u(G)\neq\mathbb{G}_a(C)$ is as in \eqref{radop}, then $\mathcal{L}(z)\in L$ by Theorem~\ref{correspondence}. Hence, $\mathcal{L}(z)$ is $\delta$-algebraic over $k$, and therefore $z$ is also $\delta$-algebraic over $k$, concluding the proof of Lemma~\ref{wdalg1}.
\end{proof}

\begin{lem}\label{wdalg2} Suppose that $G$ is connected, $w\notin k$ is $\delta$-algebraic over $k$, and $y_1$ is $\delta$-transcendental over $k$. Then $R_u(G)=\mathbb{G}_a(C)$.
\end{lem}

\begin{proof} We proceed by contradiction: assuming that $w\notin k$ is $\delta$-algebraic over $k$, $y_1$ is $\delta$-transcendental over $k$, and $R_u(G)\neq\mathbb{G}_a(C)$ is proper, we will show that $z$ is also $\delta$-algebraic over $k$, contradicting Lemma~\ref{zdalg}. By Lemma~\ref{wrels}, $\sigma(w)=c\frac{\sigma(f)}{f}w$ for some nonzero $c\in C^\delta$ and $f\in k$, and $\frac{\delta(w)}{w}=g_1\in k$. We remark that $w\notin k$ implies that $c\neq 1$, and that since $G$ is connected, $c$ is not a root of unity and $w$ is algebraically transcendental over $k$. The following induction argument shows that $\frac{\delta^r(w)}{w}=g_r\in k$ for every $r\in\mathbb{N}$: if $\frac{\delta^{r-1}(w)}{w}=g_{r-1}\in k$, then $$g_r=\frac{\delta^r(w)}{w}=\delta\left(\frac{\delta^{r-1}(w)}{w}\right)+\frac{\delta^{r-1}(w)\delta(w)}{w^2}=\delta(g_{r-1})+g_{r-1}g_1\in k.$$ If $\mathcal{L}\in C\{Y\}_\delta$ is a linear $\delta$-polynomial as in \eqref{radop}, there exists a $g_\mathcal{L}\in k$ such that $\mathcal{L}(w)=g_\mathcal{L}w.$ It follows from the relations $$\sigma(\mathcal{L}(z))-\mathcal{L}(z)=\mathcal{L}(w)=g_\mathcal{L}w \qquad\text{and}\qquad \sigma(g_\mathcal{L}w)=c\frac{\sigma(g_\mathcal{L}f)}{g_\mathcal{L}f}w$$ that the $k$-vector space $k\langle\sigma^i(\mathcal{L}(z))\rangle_{i\in\mathbb{Z}}$ is finite-dimensional over $k$. Hence, Lemma~\ref{sfin} implies that $\mathcal{L}(z)\in L$ actually belongs to the $\sigma\delta$-PV ring $k\{y_1,y_0,(y_1y_0)^{-1}\}_\delta$ for \eqref{redeq}.

Since $\frac{\delta(y_0)}{y_0}=g_1+\frac{\delta(y_1)}{y_1}+\frac{\delta(u)}{u}$, we see that the $\sigma\delta$-PV ring $$k\{y_1,y_0,(y_1y_0)^{-1}\}_\delta=k\left[y_1,y_1^{-1},w,w^{-1},\delta^i\left(\frac{\delta(y_1)}{y_1}\right)\right]_{i\in\mathbb{Z}_{\geq 0}}.$$ We define $$P=k\left[\delta^i\left(\frac{\delta(y_1)}{y_1}\right)\right]_{i\in\mathbb{Z}_{\geq 0}},$$ and observe that we have a decomposition into $\sigma$-invariant $k$-vector spaces $$k\{ y_1,w,(y_1w)^{-1}\}_\delta\simeq\bigoplus_{m_1,m_0}P(m_1,m_0),\quad \text{where} \quad P(m_1,m_0)=y_1^{m_1}w^{m_0}\cdot P,$$ and the sum is direct because: $w$ is algebraically transcendental over $k$; $y_1$ is $\delta$-transcendental over $k$; and therefore $y_1$ is also $\delta$-transcendental over $k(w)$, since $w$ is $\delta$-algebraic over $k$; which implies that $y_1$ is algebraically transcendental over $P[w,w^{-1}]$. Hence, the relation $$\sigma(\mathcal{L}(z))-\mathcal{L}(z)=g_\mathcal{L}w\in P(0,1)$$ implies that $\mathcal{L}(z)-d\in P(0,1)$ for some $d\in C=P^\sigma$.

We order the monomials in $P$ lexicographically, as follows: first compare the orders of the highest derivatives of $\frac{\delta(y_1)}{y_1}$ appearing in each monomial. If these are the same $r$, then compare the algebraic exponents of $\delta^r(\frac{\delta(y_1)}{y_1})$. If these are the same, then compare the exponents of $\delta^{r-1}(\frac{\delta(y_1)}{y_1})$, and so on. Then $$\mathcal{L}(z)=hw\left(\delta^r\left(\frac{\delta(y_1)}{y_1}\right)\right)^{n_r}\dots\left(\frac{\delta(y_1)}{y_1}\right)^{n_0} +\ (\text{lower-order terms})$$ for some $h\in k$, and applying $\sigma-1$ on both sides we obtain that either $n_i=0$ for every $i$, or else the leading term $$\left(\sigma(h)c\frac{\sigma(f)}{f}w-hw\right)\left(\delta^r\left(\frac{\delta(y_1)}{y_1}\right)\right)^{n_r}\dots\left(\frac{\delta(y_1)}{y_1}\right)^{n_0}=0,$$ which would imply that $\sigma(hf)=c^{-1}hf$. But by \cite[Cor.~2.3]{vanderput-singer:1997}, the $\sigma$-Galois group of $\sigma(y)=c^{-1}y$ over $k$ is nontrivial whenever $c\neq 1$, in which case there is no nonzero element of $k$ that satisfies $\sigma(y)=c^{-1}y$. Hence, $\mathcal{L}(z)=hw$ is $\delta$-algebraic over $k$, which implies that $z$ is also $\delta$-algebraic over $k$, concluding the proof of Lemma~\ref{wdalg2}.
\end{proof}

To summarize: since $R_u(G)$ is connected, we may assume that $G$ is connected by Remark~\ref{connected-rem}. When $w$ is $\delta$-transcendental over $k$, Theorem~\ref{rad} follows from Lemma~\ref{wdtran}. When $w\notin k$ is $\delta$-algebraic over $k$, Theorem~\ref{rad} follows from Lemma~\ref{wdalg1} when $y_1$ is also $\delta$-algebraic over $k$, and from Lemma~\ref{wdalg2} when $y_1$ is $\delta$-transcendental over $k$. Finally, when $w\in k$, Theorem~\ref{rad} follows from Corollary~\ref{wrat-cor}, concluding the proof of Theorem~\ref{rad}.
\end{proof}

In the following result, we apply Theorem~\ref{rad} to conclude the computation of the $\sigma\delta$-Galois group $G$ for \eqref{difeq} begun in Proposition~\ref{galdiag}, Remark~\ref{diagonalizable-rem}, Remark~\ref{connected-rem}, and Proposition~\ref{wrat}. We recall that $H$ denotes the $\sigma$-Galois group for \eqref{difeq}. The unipotent radicals $R_u(H)$ and $R_u(G)$ are defined as in \eqref{uniprad-def}. By Remark~\ref{connected-rem}, we may assume that $H$ and $G$ are connected.

\begin{prop} \label{bigut2} Suppose that $H$ is connected, and not conjugate to a subgroup of $$\left\{\begin{pmatrix} \alpha & \beta \\ 0 & \alpha\end{pmatrix} \ \middle| \ \alpha,\beta\in C, \ \alpha\neq 0\right\}.$$ Then $R_u(G)=R_u(H)$.
\end{prop}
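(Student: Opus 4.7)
The plan is to rule out the remaining possibility $R_u(G)=\{0\}$ under the stated hypothesis, since $R_u(G)\subseteq R_u(H)$ is automatic and Theorem~\ref{rad} provides the dichotomy $R_u(G)\in\{\{0\},\mathbb{G}_a(C)\}$. If $R_u(H)=\{0\}$ there is nothing to prove, so assume $R_u(H)=\mathbb{G}_a(C)$. By Corollary~\ref{wrat-cor}, the hypothesis that $H$ is not conjugate to a subgroup of $\mathbb{G}_m(C)\ltimes\mathbb{G}_a(C)$ with $\alpha=\lambda$ is equivalent to $w\notin k$.

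When $w\notin k$ is $\delta$-algebraic over $k$, Lemma~\ref{wdalg1} (if $y_1$ is $\delta$-algebraic over $k$) or Lemma~\ref{wdalg2} (if $y_1$ is $\delta$-transcendental over $k$), established during the proof of Theorem~\ref{rad}, directly deliver $R_u(G)=\mathbb{G}_a(C)$ and hence the claim. The new case is when $w$ is $\delta$-transcendental over $k$, since then Lemma~\ref{wdtran} only supplies the dichotomy.

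In this remaining case, the plan is to suppose $R_u(G)=\{0\}$ for contradiction. By the Galois correspondence (Theorem~\ref{correspondence}) applied to the normal subgroup $R_u(G)\lhd G$, this forces $z\in M:=\mathrm{Frac}(L)$, where $L=k\{y_1,y_0,(y_1y_0)^{-1}\}_\delta$ is the $\sigma\delta$-PV ring for the diagonal equation \eqref{redeq}. The transformation rule \eqref{zw-h} then reads $\gamma(z)=(\lambda_\gamma/\alpha_\gamma)z+c_\gamma$ with $c_\gamma:=\beta_\gamma/\alpha_\gamma\in C$, and the assignment $\gamma\mapsto c_\gamma$ is a differential rational cocycle $c:\bar G\to\mathbb{G}_a(C)$ for the twisted action by $\lambda/\alpha$, satisfying $c(\gamma\gamma')=(\lambda_{\gamma'}/\alpha_{\gamma'})c(\gamma)+c(\gamma')$.

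The main obstacle is to show that every such $c$ is a coboundary, i.e., $c_\gamma=c_0(\lambda_\gamma/\alpha_\gamma-1)$ for some $c_0\in C$. Since $w$ is $\delta$-transcendental, Lemma~\ref{wrels} gives that the character $\lambda/\alpha:\bar G\to\mathbb{G}_m(C)$ has image not contained in $\mathbb{G}_m(C^\delta)$, and by connectedness of $\bar G$ (Remark~\ref{connected-rem}) and Proposition~\ref{classification} this image strictly contains $\mathbb{G}_m(C^\delta)$. Restricting the cocycle identity to the subgroups $\{\alpha=1\}$ and $\{\lambda=1\}$ of $\bar G$ (or equivalent one-parameter subgroups probing $\lambda/\alpha$), one reduces the problem to classifying differential rational solutions of functional equations of the form $f(\lambda\lambda')=f(\lambda')+\lambda'f(\lambda)$; a direct analysis (showing that genuine derivatives of $\lambda$ cannot appear in $f$, paralleling the treatment of linear differential characters of $\mathbb{G}_m$) forces $f(\lambda)=K(\lambda-1)$ and combines to the coboundary form of $c$. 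Once $c$ is a coboundary, conjugating the embedding $G\hookrightarrow\mathrm{GL}_2(C)$ by $\bigl(\begin{smallmatrix}1 & c_0\\ 0 & 1\end{smallmatrix}\bigr)$ makes $G$ land in the diagonal torus, so its Zariski closure $H$ is also conjugate to a diagonal subgroup, contradicting $R_u(H)=\mathbb{G}_a(C)$ and completing the proof.
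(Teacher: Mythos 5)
Your reduction to the case $R_u(H)=\mathbb{G}_a(C)$, $w\notin k$, and your use of Theorem~\ref{rad} together with Lemmas~\ref{wdalg1} and~\ref{wdalg2} for the $\delta$-algebraic case, match the paper. Where you genuinely diverge is the $\delta$-transcendental case: the paper works on the fixed-ring side of the Galois correspondence (Claim~\ref{zspec} produces $z=gw+d$ with $g$ in the ring $P$, and a leading-monomial computation forces $g\in k$, whence $z\in k(w)$ and $R_u(H)=\{0\}$), whereas you work on the group side and argue that the extension $1\to R_u(G)\to G\to D\to 1$ with $R_u(G)=\{0\}$ must split inside the Borel, i.e.\ that the cocycle $\gamma\mapsto\beta_\gamma$ is a coboundary. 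That strategy is sound and, once the coboundary claim is in hand, your final conjugation step correctly yields $R_u(H)=\{0\}$ and the contradiction.

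The weak point is your justification of the coboundary claim. Restricting to the subgroups $\{\alpha=1\}$ and $\{\lambda=1\}$ of $D$ can be vacuous ($D$ may be the graph of a relation such as $\lambda=\alpha^2$, in which case both intersections are trivial for connected $D$), and the asserted ``direct analysis showing that genuine derivatives of $\lambda$ cannot appear'' is precisely the kind of statement that \emph{fails} for the trivially twisted action --- that failure is the content of Proposition~\ref{wrat}(iii), where $\beta_\gamma=\alpha_\gamma\mathcal{L}\bigl(\tfrac{\delta(\alpha_\gamma)}{\alpha_\gamma}\bigr)$ gives a nontrivial cocycle --- so invoking a ``parallel'' with differential characters of $\mathbb{G}_m$ points in the wrong direction. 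Fortunately the claim is true and has a one-line proof that you should substitute: since $\ker\bigl(G\to\mathbb{G}_m(C)^2\bigr)=R_u(G)=\{0\}$, the group $G$ is abelian, and commuting two upper-triangular matrices gives $\beta_{\gamma'}(\alpha_\gamma-\lambda_\gamma)=\beta_\gamma(\alpha_{\gamma'}-\lambda_{\gamma'})$ for all $\gamma,\gamma'\in G$. By the hypothesis and Zariski density there is some $\gamma_0\in G$ with $\alpha_{\gamma_0}\neq\lambda_{\gamma_0}$, so $\beta_\gamma=c_0(\alpha_\gamma-\lambda_\gamma)$ with $c_0=\beta_{\gamma_0}/(\alpha_{\gamma_0}-\lambda_{\gamma_0})$, which is exactly your coboundary form. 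Note that this uses no differential algebra at all and does not need $w$ to be $\delta$-transcendental (only $\lambda/\alpha$ nontrivial on $G$), so it in fact subsumes your appeal to Lemmas~\ref{wdalg1} and~\ref{wdalg2}; it also makes the digression through $z\in M=L$ unnecessary, since the argument lives entirely in the matrix group.
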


\begin{proof} We keep the notation introduced at the beginning of this section: $u\in\bar{\mathbb{Q}}(x)$ is the unique solution in $k$ to the Riccati equation~\eqref{ric1}, $v=\frac{b}{u}$, $\{y_1,y_2\}$ is a $C$-basis of solutions for \eqref{difeq} such that $\sigma(y_1)=uy_1$ and $\sigma(y_2)-uy_2=y_0$, where $y_0\neq 0$ satisfies $\sigma(y_0)=vy_0$. The action of $G$ on the solutions is given in \eqref{h-action}. The auxiliary elements $w$ and $z$ are defined as in \eqref{zw-def}; they are acted upon by $\sigma$ as in \eqref{zw-sigma} and by $G$ as in \eqref{zw-h}.

Since $R_u(G)\subseteq\mathbb{G}_a(C)$ is algebraic by Theorem~\ref{rad}, we only have to show that $R_u(G)\neq\{0\}$. By Corollary~\ref{wrat-cor}, $H$ not being a subgroup of $\mathbb{G}_m(C)\ltimes\mathbb{G}_a(C)$ as in \eqref{bigut} is equivalent to assuming that $w\notin k$. If $w$ is $\delta$-algebraic over $k$, it follows from Lemma~\ref{wdalg1} and Lemma~\ref{wdalg2} that $R_u(G)=\mathbb{G}_a(C)$.

We proceed by contradiction: assuming that $w$ is $\delta$-transcendental over $k$ and $R_u(G)=\{0\}$, we will show that $R_u(H)=\{0\}$. Let $M$ be the total ring of fractions of $R$, and let $L$ be the total ring of fractions of the $\sigma\delta$-PV ring $k\{y_0,y_1,(y_0y_1)^{-1}\}_\delta$ for the system \eqref{redeq}. Since $G$ is connected, Proposition~\ref{idemp} implies that $M$ and $L$ are fields. Since $\{0\}=R_u(G)=\mathrm{Gal}_{\sigma\delta}(M/L)$, Theorem~\ref{correspondence} implies that $M= L$, and in particular $z\in L$. Consider the subfield $T\subset L$ obtained by taking the field of fractions of the $\sigma\delta$-PV ring $P=k\{\frac{\delta(y_0)}{y_0},\frac{\delta(y_1)}{y_1}\}_\delta$, and note that $L= T(y_0,y_1)$.

\begin{claim}\label{zspec}
There exist $g\in P$ and $d\in C$ such that $z=gw+d$.
\end{claim}
\begin{proof}
Recall from \eqref{zw-sigma} that $\sigma(w)=\tfrac{b}{u\sigma(u)}w$ and $\sigma(z)=z+w$. This implies that $\frac{z}{w}$ satisfies \begin{equation} \label{zow} \sigma\left(\frac{z}{w}\right)-\frac{u\sigma(u)}{b}\left(\frac{z}{w}\right)=\frac{u\sigma(u)}{b}.\end{equation} Since $G$ is connected, so is the reductive quotient $G/R_u(G)$, which we computed in Proposition~\ref{galdiag}. Since $w$ is $\delta$-algebraic over $k$, so is $\frac{y_0}{y_1}$, so case~(i) of Proposition~\ref{galdiag} does not hold under our present assumptions. But it is still possible for $G/R_u(G)$ to be as in case~(ii) of Proposition~\ref{galdiag}, provided that either the integers $m$ and $n$ are both nonzero and relatively prime, or else $\{m,n\}=\{0,1\}$ (cf.~Remark~\ref{connected-rem}).

We will first prove the claim under the assumption that case~(ii) of Proposition~\ref{galdiag} does not hold. Concretely, this means that $y_1$ and $y_0$ are algebraically independent over $T$. Since the coefficient $\frac{u\sigma(u)}{b}\in k\subset T$, a twofold application of \cite[Lem.~6.5]{hardouin-singer:2008} shows that there exists $g\in T$ such that $\sigma(g)-\frac{u\sigma(u)}{b}g=\frac{u\sigma(u)}{b}$. This implies that the $k$-vector space $k\langle \sigma^i(g)\rangle_{i\in\mathbb{Z}}$ is finite dimensional, and therefore $g\in P$ by Lemma~\ref{sfin}. It follows from $\sigma(\frac{z}{w}-g)=\frac{u\sigma(u)}{b}(\frac{z}{w}-g)$ and Theorem~\ref{nnc} that $\frac{z}{w}-g=dw^{-1}$ for some $d\in C$, concluding the proof of the claim in this case.

It remains to prove the claim when the reductive quotient $G/R_u(G)$ is as in case~(ii) of Proposition~\ref{galdiag} (where either $m$ and $n$ are both nonzero and relatively prime, or else $\{m,n\}=\{0,1\}$). Let us assume without loss of generality that $m\neq 0$. Since $\gamma(y_1^my_0^n)=\alpha_\gamma^m\lambda_\gamma^n(y_1^my_0^n)=y_1^my_0^n$, Theorem~\ref{correspondence} implies that $f=y_1^my_0^n\in k$. Since $w$ is $\delta$-transcendental over $k$, so is $\frac{y_0}{y_1}$, and it follows that $y_0$ is $\delta$-transcendental over $k$, and therefore algebraically transcendental over $T$. We may assume that $m>0$ is the smallest positive integer such that $y_1^m\in T(y_0)$. If $m>1$, then $\{1,y_1,\dots,y_1^{m-1}\}$ is a basis for $L=T(y_0,y_1)$ as a vector space over $T(y_0)$, so we may write $\frac{z}{w}\in L= T(y_0)(y_1)$ uniquely as a polynomial expression $\frac{z}{w}=q_0+q_1y_1+\dots+q_{m-1}y_1^{m-1}$, where each $q_i\in T(y_0)$. If we substitute this expression for $\frac{z}{w}$ in \eqref{zow}, we obtain $$\sigma\left(\sum_{i=0}^{m-1} q_iy_1^i\right)-\frac{u\sigma(u)}{b}\sum_{i=0}^{m-1} q_iy_1^i=\sum_{i=0}^{m-1} \left(\sigma(q_i)u^i-\frac{u\sigma(u)}{b}q_i\right)y_1^i=\frac{u\sigma(u)}{b},$$ which implies in particular that $q_0\in T(y_0)$ satisfies $\sigma(q_0)-\frac{u\sigma(u)}{b}q_0=\frac{u\sigma(u)}{b}$. Note that the existence of such an element $q_0$ is immediate when $m=1$. Since $y_0$ is algebraically transcendental over $T$, \cite[Lem.~6.5]{hardouin-singer:2008} implies that there exists $g\in T$ such that $\sigma(g)-\frac{u\sigma(u)}{b}g=\frac{u\sigma(u)}{b}$. This implies that the $k$-vector space $k\langle \sigma^i(g)\rangle_{i\in\mathbb{Z}}$ is finite dimensional, and therefore $g\in P$ by Lemma~\ref{sfin}. It follows from $\sigma(\frac{z}{w}-g)=\frac{u\sigma(u)}{b}(\frac{z}{w}-g)$ and Theorem~\ref{correspondence} that $\frac{z}{w}-g=dw^{-1}$ for some $d\in C$, which concludes the proof of the claim.\end{proof}

We will show that the element $g\in P$ described in Claim~\ref{zspec} actually belongs to $k$, which will imply that $z=gw+d \in k(w)$ and therefore $R_u(H)=0$, a contradiction. We order the monomials in $P$ lexicographically, as follows: if $y_0$ is $\delta$-transcendental over $k$, compare the orders of the highest derivatives of $\frac{\delta(y_0)}{y_0}$ appearing in each monomial. If these are the same $r_0$, then compare the algebraic exponents of $\delta^{r_0}(\frac{\delta(y_0)}{y_0})$ appearing in each monomial. If these are the same $n_{0,r_0}$, then compare the algebraic exponents of $\delta^{r_0-1}(\frac{\delta(y_0)}{y_0})$, and so on. If $y_0\in T$ or if $\delta^n(\frac{\delta(y_0)}{y_0})$ does not occur in the monomials for any $n\in\mathbb{N}$, then compare the highest orders of the derivatives of $\frac{\delta(y_1)}{y_1}$, and if these are the same $r_1$, then compare the algebraic exponents of $\delta^{r_1}(\frac{\delta(y_1)}{y_1})$, and so on, as with $y_0$. Then \begin{equation}\label{zlast} g=h\left(\delta^{r_0}\left(\frac{\delta(y_0)}{y_0}\right)\right)^{n_{0,r_0}}\dots\left(\frac{\delta(y_0)}{y_0}\right)^{n_{0,0}} \left(\delta^{r_1}\left(\frac{\delta(y_1)}{y_1}\right)\right)^{n_{1,r_1}}\dots\left(\frac{\delta(y_1)}{y_1}\right)^{n_{1,0}} + \ \text{(lower-order terms)},\end{equation} for some $h\in k$. If we substitute this expression for $g$ in $$\sigma(z)-z=\sigma(gw+d)-(gw+d)=\sigma(gw)-gw=w,$$ we see that either $n_{i,j_i}=0$ for every $0\leq j_i\leq r_i$, $i=1,2$, in which case $g\in k$, or else the leading coefficient $\sigma(h)\frac{b}{u\sigma(u)}w-hw$ of $\sigma(gw)-gw$ must be zero. But then $\sigma(h)=\frac{u\sigma(u)}{b}h$ implies that $h=cw^{-1}$ for some $c\in C$, contradicting the assumption that $w\notin k$. Therefore, $g\in k$ and $R_u(H)=\{0\}$. This contradiction concludes the proof of Proposition~\ref{bigut2}.
\end{proof}

\begin{rem} To compute the difference-differential Galois group $G$ for \eqref{difeq} when there is only one solution $u\in\bar{\mathbb{Q}}(x)$ to the Riccati equation \eqref{ric1}, we proceed as follows. We recall that $R_u(H)=\mathbb{G}_a(C)$ in this case. We write $v=\frac{b}{u}$, and compute the reductive quotient $G/R_u(G)$ as in Proposition~\ref{galdiag} and Remark~\ref{diagonalizable-rem}. We then compute the number of connected components $t$ of $G/R_u(G)$ as in Remark~\ref{connected-rem}, and proceed to compute $G^0$, the connected component of the identity in $G$.

Keeping the notation introduced in Remark~\ref{connected-rem}, we now proceed to check whether the hypotheses of Proposition~\ref{wrat} are satisfied (that is, whether $H^0$ is a subgroup of $\mathbb{G}_m(C)\ltimes\mathbb{G}_a(C)$ as in \eqref{bigut}), which by Corollary~\ref{wrat-cor} is the same as deciding whether $w_t\in\bar{\mathbb{Q}}(x)$. Note that $w_t$ satisfies \begin{equation}\label{wrat-check}\sigma^t(y)=\frac{\sigma^t(f_t)v_t}{\sigma^t(u_t)}y,\end{equation} and we may use the results of \cite{abramov:1989} to decide whether \eqref{wrat-check} admits a solution in $\bar{\mathbb{Q}}(x)$.

If \eqref{wrat-check} does not admit a solution in $\bar{\mathbb{Q}}(x)$, then $R_u(G^0)=R_u(H^0)$ by Proposition~\ref{bigut2}. Since $R_u(G)=R_u(G^0)$ and $R_u(H^0)=R_u(H)=\mathbb{G}_a(C)$, it follows that $G$ coincides with the subgroup of \begin{equation}\label{ut-last}\mathbb{G}_m(C)^2\ltimes\mathbb{G}_a(C)=\left\{\begin{pmatrix} \alpha & \beta \\ 0 & \lambda \end{pmatrix} \ \middle| \ \alpha,\beta,\lambda\in C, \ \alpha\lambda\neq 0\right\}\end{equation} defined by the conditions on $\alpha$ and $\gamma$ that we compute via Proposition~\ref{galdiag} and Remark~\ref{diagonalizable-rem}.

If \eqref{wrat-check} does admit a solution $w_t\in\bar{\mathbb{Q}}(x)$, then we have that $H^0$ is a connected subgroup of $\mathbb{G}_m(C)\ltimes\mathbb{G}_a(C)$ as in \eqref{bigut}, and we then verify which of the conditions in Proposition~\ref{wrat} holds. Conditions (i) and (ii) are verified just as in Remark~\ref{diagonalizable-rem}, bearing in mind that the $u$ and $v$ referred to there are both replaced with $u_t$ here. If any of these conditions are satisfied, so that $R_u(G)=R_u(G^0)=\mathbb{G}_a(C)$, then we again have that $G$ coincides with the subgroup of $\mathbb{G}_m(C)^2\ltimes\mathbb{G}_a(C)$ defined by the conditions on $\alpha$ and $\lambda$ that we compute via Proposition~\ref{galdiag} and Remark~\ref{diagonalizable-rem}.

In order to verify the conditions in Proposition~\ref{wrat}(iii), we proceed as follows: for each $\mathbb{Z}$-orbit $[d]$ for $d\in \bar{\mathbb{Q}}$, and each $n\in\mathbb{N}$, we compute the discrete residues $\mathrm{dres}_x(\frac{\delta(u_t)}{u_t},[d],1)$ and $\mathrm{dres}_x(w_t,[d],n)$. Let $r\in\mathbb{N}$ denote the largest degree such that $\mathrm{dres}_x(w_t,[d],r)\neq 0$ for some $[d]$. Then we may write the linear $\delta$-polynomial $\mathcal{L}\in\bar{\mathbb{Q}}\{Y\}_\delta$ of Propostition~\ref{wrat}(iii) with undetermined coefficients as $\mathcal{L}=\sum_{i=0}^{r-1}c_i\delta^i(Y)$, and compute the coefficients $c_i$ from the relations \begin{equation}\label{galdiag-syst}(-1)^ii!\mathrm{dres}_x\left(\frac{\delta(u_t)}{u_t},[d],i+1\right)c_i=\mathrm{dres}_x(w_t,[d],i+1),\end{equation} which must be satisfied for every $0\leq i \leq r-1$ and every $[d]$ simultaneously.

If the system \eqref{galdiag-syst} does not admit a solution $(c_0,\dots,c_{r-1})\in\bar{\mathbb{Q}}^r$, then $G^0$ and $H^0$ both coincide with $\mathbb{G}_m(C)\ltimes\mathbb{G}_a(C)$ as in \eqref{bigut}, and in this case $G$ is the subgroup of $\mathbb{G}_m(C)^2\ltimes\mathbb{G}_a(C)$ defined by the conditions on $\alpha$ and $\lambda$ that we compute via Proposition~\ref{galdiag} and Remark~\ref{diagonalizable-rem}. If the system \eqref{galdiag-syst} does admit a solution $(c_0,\dots,c_{r-1})\in\bar{\mathbb{Q}}^r$, then $G^0$ is as described in Proposition~\ref{wrat}(iii), with $\mathcal{L}=\sum_{i=0}^{r-1}c_i\delta^i(Y)$. \end{rem}

\section{Imprimitive groups}\label{dihedral-sec}

We recall the notation introduced in the previous sections: $k=C(x)$, where $C$ is a $\delta$-closure of $\bar{\mathbb{Q}}$, $\sigma$ denotes the $C$-linear automorphism of $k$ defined by $\sigma(x)=x+1$, and $\delta(x)=1$. Let us now suppose that there are no solutions in $\bar{\mathbb{Q}}(x)$ to the first Riccati equation \eqref{ric1}, and that either $a=0$ or else there exists a solution $e\in\bar{\mathbb{Q}}(x)$ to the second Riccati equation \eqref{ric2}, so that the matrix $\sigma$-equation \eqref{mateq} associated to \eqref{difeq} is equivalent (in the sense of Definition~\ref{equivalent-def}) to \begin{equation} \label{dih-mateq} \sigma(Y)=\begin{pmatrix} 0 & 1 \\ -r & 0 \end{pmatrix}Y\end{equation} for some $r\in\bar{\mathbb{Q}}(x)$, as in \eqref{impequiv}. Then the $\sigma\delta$-Galois group $G$ for \eqref{difeq} is conjugate to a non-diagonal subgroup of \eqref{dihedral}, by Proposition~\ref{dense} and the results of \cite{hendriks:1998} summarized in \S\ref{hendriks-sec}. In particular, $G$ has at least two connected components. More precisely, if we let $$\mathrm{Diag}=\left\{\begin{pmatrix} \alpha & 0 \\ 0 & \lambda\end{pmatrix} \ \middle| \ \alpha,\lambda\in C, \ \alpha\lambda\neq 0\right\},$$ then the connected component of the identity $G^0\subseteq\mathrm{Diag}\cap G$, and we have an exact sequence $$\{1\}\longrightarrow \mathrm{Diag}\cap G \longrightarrow G \longrightarrow \{\pm 1\}\longrightarrow \{1\}.$$ Thus, the computation of $G$ is (almost) reduced to the computation of the diagonal group $\mathrm{Diag}\cap G$.

By Proposition~\ref{idemp}, the $\sigma\delta$-PV ring $R$ for \eqref{dih-mateq} decomposes into a direct product $R=R_0\oplus R_1$, where each $R_i$ is a $\sigma^2\delta$-PV ring over $k$ for \begin{equation} \label{dih-mateq2} \sigma^2(Y) =\begin{pmatrix} 0 & 1 \\ \sigma(-r) & 0 \end{pmatrix}\begin{pmatrix} 0 & 1 \\ -r & 0 \end{pmatrix}Y=\begin{pmatrix} -r & 0 \\ 0 & -\sigma(r)\end{pmatrix}Y.\end{equation} Moreover, $\mathrm{Diag}\cap G$ is identified with $\mathrm{Gal}_{\sigma^2\delta}(R_i/k)$ for $i=0,1$.

If we write $\tilde{\sigma}=\sigma^2$, $\tilde{x}=\frac{x}{2}$, and $\tilde{\delta}=2\delta$, then $\bar{\mathbb{Q}}(x)=\bar{\mathbb{Q}}(\tilde{x})$, $k=C(\tilde{x})$, $\tilde{\sigma}(\tilde{x})=\tilde{x}+1$, and $\tilde{\delta}(\tilde{x})=1$ (cf.~the last paragraph of Remark~\ref{connected-rem}). If we consider $r$ and $\sigma(r)$ as rational functions in $\tilde{x}$ with coefficients in $\bar{\mathbb{Q}}$, we may compute the $\tilde{\sigma}\tilde{\delta}$-Galois group $\mathrm{Gal}_{\tilde{\sigma}\tilde{\delta}}(R_0/k)=G^0$ for $$\tilde{\sigma}(Y)=\begin{pmatrix} -r & 0 \\ 0 & -\sigma(r)\end{pmatrix}Y$$ as in Proposition~\ref{galdiag}, with $u=-r$ and $v=-\sigma(r)$. However, not every case of Proposition~\ref{galdiag} can occur.

\begin{prop} \label{dihedral-complete} Suppose that $0\neq r\in\bar{\mathbb{Q}}(x)$, and suppose that the $\sigma$-Galois group $H$ for \begin{equation}\label{dih-system}\sigma(Y)=\begin{pmatrix} 0 & 1 \\ -r & 0\end{pmatrix}Y,\end{equation} is irreducible and imprimitive. Then $G$ is the subgroup of \begin{equation}\label{dih-gp-prop}\{\pm 1\}\ltimes\mathbb{G}_m(C)^2=\left\{\begin{pmatrix} \alpha & 0 \\ 0 & \lambda\end{pmatrix} \ \middle| \ \alpha,\lambda\in C, \ \alpha\lambda\neq 0\right\} \cup\left\{\begin{pmatrix} 0 & \beta \\ \epsilon & 0\end{pmatrix} \ \middle| \ \beta,\epsilon\in C, \ \beta\epsilon\neq 0\right\}\end{equation} defined by the following conditions on $\alpha$, $\lambda$, $\beta$, and $\epsilon$.
\begin{enumerate}
\item[(i)] There exists a nonzero element $f\in\bar{\mathbb{Q}}(x)$ and a primitive $m^\text{th}$ root of unity $\zeta_m$ such that $r=\zeta_m\frac{\sigma(f)}{f}$ if and only if $\mathrm{det}(G)=\mu_m$, the group of $m^\text{th}$ roots of unity, or equivalently:
\begin{enumerate}
\item[(a)] when $m$ is odd, $(\alpha\lambda)^m=1$ and $(\beta\epsilon)^m=-1$.
\item[(b)] when $m$ is divisible by $4$, $(\alpha\lambda)^{\frac{m}{2}}=1$ and $(\beta\epsilon)^{\frac{m}{2}}=-1$.
\item[(c)] when $m$ is even but not divisible by $4$, $(\alpha\lambda)^{\frac{m}{2}}=1$ and $(\beta\epsilon)^{\frac{m}{2}}=1$.
\end{enumerate}
\item[(ii)] Case (i) does not hold and there exists an element $f\in\bar{\mathbb{Q}}(x)$ such that $\frac{\delta(r)}{r}=\sigma(f)-f$ if and only if $\mathrm{det}(G)=\mathbb{G}_m(C^\delta)$, or equivalently $\delta(\alpha\lambda)=\delta(\beta\epsilon)=0$.
\item[(iii)] If neither (i) nor (ii) holds, then $G=\{\pm 1\}\ltimes\mathbb{G}_m(C)^2$.
\end{enumerate}
\end{prop}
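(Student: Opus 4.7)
The plan is to combine Proposition~\ref{galdiag}, applied to the diagonalized $\sigma^2$-system obtained via Proposition~\ref{idemp}, with an analysis of the Casoratian $W := y_1\sigma(y_2) - y_2\sigma(y_1)$, which satisfies $\sigma(W) = rW$ and transforms under $G$ via $\gamma(W) = \det(\gamma)\cdot W$. As explained in the paragraphs preceding the statement, Proposition~\ref{dense} together with the algorithm of \S\ref{hendriks-sec} place $G$ inside $\{\pm 1\} \ltimes \mathbb{G}_m(C)^2$ as a subgroup with $G^0 \subseteq \mathrm{Diag}\cap G$, and Proposition~\ref{idemp} identifies $G^0$ with $\mathrm{Gal}_{\sigma^2\delta}(R_0/k)$ for the diagonal system $\sigma^2(Y) = \mathrm{diag}(-r,-\sigma(r))Y$.

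The first step is to determine $\det(G) \subseteq \mathbb{G}_m(C)$ by analyzing the scalar $\sigma\delta$-PV extension generated by $W$. Since $\det(G)$ is a differential algebraic subgroup of $\mathbb{G}_m(C)$, Proposition~\ref{classification} lists three possibilities, and translating each via \cite[Lem.~2.1]{vanderput-singer:1997} (for the finite case), Proposition~\ref{indsum} and Corollary~\ref{diffprod} (for the $\delta$-constants case) yields the three alternatives stated in the proposition: $\det(G) = \mu_m$ is equivalent to $r = \zeta_m\sigma(f)/f$ for some primitive $m$-th root of unity; $\det(G) = \mathbb{G}_m(C^\delta)$ is equivalent (given that case~(i) fails) to $\delta r/r = \sigma(f)-f$; and otherwise $\det(G) = \mathbb{G}_m(C)$.

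The second step converts the determinant conditions into explicit equations on $\alpha,\lambda,\beta,\epsilon$: diagonal elements have $\det = \alpha\lambda$ and swap elements have $\det = -\beta\epsilon$. In case~(iii) the condition $\det(G) = \mathbb{G}_m(C)$ imposes no $\delta$-restriction, so the entire group $\{\pm 1\}\ltimes\mathbb{G}_m(C)^2$ occurs. In case~(ii) the condition $\delta(\det) = 0$ gives $\delta(\alpha\lambda) = 0 = \delta(\beta\epsilon)$. In case~(i) the condition $(\det)^m = 1$ translates to $(\alpha\lambda)^m = 1$ and $(-\beta\epsilon)^m = 1$, i.e., $(\beta\epsilon)^m = (-1)^m$; the three sub-cases (a), (b), (c) emerge from a parity analysis of $m$, refined by applying Proposition~\ref{galdiag} to the diagonal $\sigma^2$-system with $u = -r$ and $v = -\sigma(r) = \sigma(u)$ to pin down the precise structure of $G^0$. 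In particular, for $m$ even, the square of any swap element lies in $G^0$ as a scalar multiple of $I$ and forces $(\alpha\lambda)^{m/2} = 1$; the sign $(-1)^{m/2}$ then distinguishes cases (b) and (c).

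The main obstacle is establishing that $G$ is exactly the stated subgroup, not merely a proper subgroup satisfying the determinant condition. This requires simultaneously showing that $G^0$ is cut out by precisely the stated conditions on $\alpha\lambda$ (using Proposition~\ref{galdiag} with the constraint $v = \sigma(u)$, which restricts which cases of that proposition can occur) and that the swap coset is nonempty with swap elements satisfying the stated determinant condition. The parity analysis for $m$ in case~(i) is the most delicate, since one must carefully track how the sign $-1$ interacts with $m$-th roots of unity and confirm that closure under products of swap elements forces exactly the refinements (a), (b), or (c) according as $m$ is odd, divisible by $4$, or even but not divisible by $4$.
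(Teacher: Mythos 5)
Your overall strategy --- determine $\det(G)$ from the Casoratian, translate that into conditions on the matrix entries, and use Proposition~\ref{galdiag} on the diagonalized $\sigma^2$-system to control $G^0$ --- is the same as the paper's, and your treatment of the determinant trichotomy and of the parity bookkeeping in case~(i) is essentially right (though the paper pins down the exact exponent $n$ and the sign of $(\beta\epsilon)^n$ from \cite[Lem.~4.8]{hendriks:1998}, whereas squares of swap elements by themselves only constrain the scalar matrices $(\beta\epsilon)I$, not all of $\mathrm{Diag}\cap G$). The genuine gap is exactly the point you flag as ``the main obstacle'' and then leave unresolved: knowing $\det(G)$ does not determine $G$, because $G^0$ could a priori be a proper differential algebraic subgroup of $\mathrm{Diag}\cap H$, cut out by a relation $\alpha^m\lambda^n=1$ or $\delta(\alpha^m\lambda^n)=0$ with $(m,n)$ not proportional to $(1,1)$, which is invisible to the determinant. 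Asserting that the constraint $v=\sigma(u)$ ``restricts which cases of Proposition~\ref{galdiag} can occur'' is not yet an argument.

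The paper closes this gap with two specific devices that your proposal would need. First, case~(iii) of Proposition~\ref{galdiag} (i.e.\ $\frac{\delta(r)}{r}=\sigma^2(f)-f$) is excluded because it would force $r=c\,\frac{\sigma^2(g)}{g}$, whence $u=\sqrt{-c}\,\frac{\sigma(g)}{g}$ solves the Riccati equation \eqref{ric1}, contradicting the irreducibility of $H$. Second, in case~(iv) of Proposition~\ref{galdiag}, conjugating $\gamma\in G^0$ by an off-diagonal element $\tau\in G$ interchanges $\alpha_\gamma$ and $\lambda_\gamma$; this symmetrizes any relation $\delta(\alpha^m\lambda^n)=0$ into $\delta(\alpha\lambda)=0$, i.e.\ into a condition on the determinant alone, and one must then descend the resulting telescoping identity $\frac{\delta(r)}{r}+\frac{\delta(\sigma(r))}{\sigma(r)}=\sigma^2(f)-f$ to $\frac{\delta(r)}{r}=\sigma(f)-f$, using that the only solution of $\sigma(y)=-y$ in $\bar{\mathbb{Q}}(x)$ is $y=0$, so as to land in case~(ii) of the statement. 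Without these two steps the argument shows only that $G$ is contained in the stated subgroup, not that it equals it.
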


\begin{proof} Let us write $T_\gamma\in\mathrm{GL}_2(C)$ for the matrix corresponding to $\gamma\in G$, so that $$T_\gamma=\begin{pmatrix} \alpha_\gamma & 0 \\ 0 & \lambda_\gamma\end{pmatrix} \qquad \text{or}\qquad T_\gamma=\begin{pmatrix} 0 & \beta_\gamma \\ \epsilon_\gamma & 0\end{pmatrix},$$ depending on whether $\gamma\in \mathrm{Diag}\cap G$, the connected component of the identity, or $T_\gamma\in G-\mathrm{Diag}\cap G$, the complement of $\mathrm{Diag}\cap G$ in $G$. Since $\mathrm{det}(G)$ and $\mathrm{det}(H)$ are the $\sigma\delta$-Galois group and $\sigma$-Galois group, respectively, for $\sigma(y)=ry$, it follows from Proposition~\ref{dense} that $\mathrm{det}(H)$ is finite if and only if $\mathrm{det}(G)$ is finite, and in this case $\mathrm{det}(G)=\mathrm{det}(H)$. It is proved in \cite[Lem.~4.8]{hendriks:1998} that either $\mathrm{det}(H)$ is infinite, or else $\mathrm{Diag}\cap H=\left\{\left(\begin{smallmatrix} \alpha & 0 \\ 0 & \lambda\end{smallmatrix}\right) \ \middle| \ (\alpha\lambda)^n=1\right\}$ for some positive integer $n$. In the latter case, if $n$ is even then $(\beta_\gamma\epsilon_\gamma)^n=-1$ for every $\gamma\in H$, and if $n$ is odd either  $(\beta_\gamma\epsilon_\gamma)^n=-1$ for every $\gamma\in H$ or $(\beta_\gamma\epsilon_\gamma)^n=1$ for every $\gamma\in H$.

So let us first assume that $D=\mathrm{det}(G)=\mathrm{det}(H)$ is finite and decide which of these possibilities occurs. It follows from \cite[Cor.~2.3]{vanderput-singer:1997} that $D$ is finite of order $m$ if and only if there exists $f\in\bar{\mathbb{Q}}(x)$ such that $r=\zeta_m\frac{\sigma(f)}{f}$ for some primitive $m^\text{th}$ root of unity $\zeta_m$ as in part (i). In any case, we see that $D=\{\alpha_\gamma\lambda_\gamma, -\beta_\gamma\epsilon_\gamma \ | \ \gamma\in G\}$, and therefore $(\alpha_\gamma\lambda_\gamma)^m=1$ for each $\gamma\in \mathrm{Diag}\cap G$ and $(-\beta_\gamma\epsilon_\gamma)^m=1$ for every $\gamma\in G-\mathrm{Diag}\cap G$. If $m$ is odd, this implies that $(\beta_\gamma\epsilon_\gamma)^m=-1$ for every $\gamma\in G-\mathrm{Diag}\cap G$, so $m=n$ and we are in case (a) of part (i). If $m=2n$ is even, then we see that $(\beta_\gamma\epsilon_\gamma)^n=\pm 1$ for every $\gamma\in G-\mathrm{Diag}\cap G$, which implies that $(\alpha_\gamma\lambda_\gamma)^n=1$ for every $\gamma\in \mathrm{Diag}\cap G$. In this case, if $(\beta_\gamma\epsilon_\gamma)^n=-1$ for every $\gamma\in G-\mathrm{Diag}\cap G$, then $n$ must be even, for otherwise $D$ would have order $n$, not $m=2n$, and we see that we are in case (b) of part (i). Finally, if $(\beta_\gamma\epsilon_\gamma)^n=1$ for every $\gamma\in G-\mathrm{Diag}\cap G$, then $n$ must be odd by \cite[Lem.~4.8]{hendriks:1998}, and we are in case (c) of part (i). This concludes the proof of part (i) and the computation of $G$ when $\mathrm{det}(G)$ is assumed to be finite.

Now suppose that $D=\mathrm{det}(G)$ is infinite, so that the $\sigma$-Galois group $H$ for \eqref{dih-system} coincides with \eqref{dih-gp-prop} by \cite[Lem.~4.8]{hendriks:1998}. Note that in this case $\mathrm{Diag}\cap G=G^0$, the connected component of the identity. By Lemma~\ref{idemp}, $G^0$ is identified with the $\sigma^2\delta$-Galois group for the system \begin{equation} \label{dih-conn} \sigma^2(Y)=\begin{pmatrix} -r & 0 \\ 0 & -\sigma(r)\end{pmatrix}Y.\end{equation} We will compute $G^0$ with Proposition~\ref{galdiag}, after replacing $\sigma$ with $\sigma^2$ as we explained above, with $u=-r$ and $v=-\sigma(r)$. We have already dealt with cases (i) and (ii) of Proposition~\ref{galdiag} (to wit: case (i) does not occur, and case (ii) only occurs with $m=n$; cf.~\cite[Lem.~4.8]{hendriks:1998}).

Suppose that $G^0$ is as in Proposition~\ref{galdiag}(iii), so that either there exists $f\in\bar{\mathbb{Q}}(x)$ with $\frac{\delta(r)}{r}=\sigma^2(f)-f$, or there exists $f'\in\bar{\mathbb{Q}}(x)$ such that $\frac{\delta(\sigma(r))}{\sigma(r)}=\sigma^2(f')-f'$. Then we see that both conditions are satisfied simultaneously, by setting $f'=\sigma(f)$ and $f=\sigma^{-1}(f')$. It follows from \cite[Cor.~3.4(1)]{hardouin-singer:2008} that $r=c\frac{\sigma^2(g)}{g}$ for some $c\in\bar{\mathbb{Q}}$ and $g\in\bar{\mathbb{Q}}(x)$. If we let $u=\sqrt{-c}\frac{\sigma(g)}{g}$, we see that both $u$ and $-u$ satisfy the Riccati equation \eqref{ric1}, contradicting the hypothesis that $H$ is irreducible. So $G^0$ cannot be as described in Proposition~\ref{galdiag}(iii).

Thus far, we have considered cases (i)--(iii) of Proposition~\ref{galdiag} (applied to \eqref{dih-conn}, where we consider $k$ as a $\sigma^2\delta$-field, with $u=-r$ and $v=-\sigma(r)$). Let us now suppose that $G^0$ is as in Proposition~\ref{galdiag}(iv), so that there exist nonzero integers $m$ and $n$ such that $m\frac{\delta(r)}{r}+n\frac{\delta(\sigma(r))}{\sigma(r)}=\sigma^2(g)-g$ for some $g\in\bar{\mathbb{Q}}(x)$, which occurs if and only if $\delta(\alpha_\gamma^m\lambda_\gamma^n)=0$ for every $\gamma\in G^0$. But then for every $\gamma\in G^0$ and any $\tau\in G-G^0$, we have $\tau\gamma\tau^{-1}\in G^0$, and $\alpha_{\tau\gamma\tau^{-1}}=\lambda_\gamma$ and $\lambda_{\tau\gamma\tau^{-1}}=\alpha_\gamma$, which implies that $\delta(\alpha_\gamma^n\lambda_\gamma^m)=0$ for every $\gamma\in G^0$, which again by Proposition~\ref{galdiag}(iv) implies that $n\frac{\delta(r)}{r}+m\frac{\delta(\sigma(r))}{\sigma(r)}=\sigma^2(\tilde{g})-\tilde{g}$ for some $\tilde{g}\in\bar{\mathbb{Q}}(x)$. If we let $f=(g+\tilde{g})/(m+n)$, we see that $\frac{\delta(r)}{r}+\frac{\delta(\sigma(r))}{\sigma(r)}=\sigma^2(f)-f$. We claim that $\frac{\delta(r)}{r}=\sigma(f)-f$. To see this, observe that $$\sigma\left(\sigma(f)-f-\frac{\delta(r)}{r}\right)=\sigma^2(f)-\sigma(f)-\frac{\delta(\sigma(r))}{\sigma(r)}=\frac{\delta(r)}{r}+f-\sigma(f)=-\left(\sigma(f)-f-\frac{\delta(r)}{r}\right).$$ By \cite[Cor.~2.3(2)]{vanderput-singer:1997}, the $\sigma$-Galois group over $\bar{\mathbb{Q}}(x)$ for $\sigma(y)=-y$ is $\{\pm 1\}$, whence the only element of $\bar{\mathbb{Q}}(x)$ that satisfies $\sigma(y)=-y$ is $y=0$, and therefore we are in case (iii) of Proposition~\ref{dihedral-complete}. On the other hand, if we assume that $\frac{\delta(r)}{r}=\sigma(f)-f$ for some $f\in\bar{\mathbb{Q}}(x)$, then the equation $\sigma(y)=ry$ has $\sigma\delta$-Galois group $D\subseteq\mathbb{G}_m(C^\delta)$ by Proposition~\ref{dense}, which implies that $\delta(\mathrm{det}(T_\gamma))=0$ for every $\gamma\in G$.

Finally, suppose that $G^0$ is as in Proposition~\ref{galdiag}(v), so that $G^0$ is the full diagonal group $\left\{\left(\begin{smallmatrix} \alpha & 0 \\ 0 & \lambda\end{smallmatrix}\right) \ \middle| \ \alpha\lambda\neq 0 \right\}$, which implies that $H^0=G^0$. Since $G$ is irreducible, there exists some $\left(\begin{smallmatrix} 0 & \beta \\ \epsilon & 0\end{smallmatrix}\right)\in G$. Since $G\subseteq H$ and $G/G^0$ coincides with $H/H^0$, it follows that $G=H$. This finishes the proof. \end{proof}

\begin{rem} To compute the difference-differential Galois group $G$ for \eqref{difeq} when there is no solution $u\in\bar{\mathbb{Q}}(x)$ to the Riccati equation \eqref{ric1}, we proceed as follows. If $a=0$, we let $r=b$. If $a\neq 0$, we apply the results of \cite{hendriks:1998} to compute a solution $e\in\bar{\mathbb{Q}}(x)$ to the Riccati equation \eqref{ric2}, assuming it exists (otherwise, $\mathrm{SL}_2(C)\subseteq H$ and the computation of $G$ is carried out in the next section), and set $r=-a\sigma(a)+\sigma(b)+a\sigma^2(\frac{b}{a})+a\sigma^2(e)$.

In order to verify the conditions of Proposition~\ref{dihedral-complete}, we proceed as follows. We first compute the discrete residues $q_{[d]}=\mathrm{dres}_x(\frac{\delta(r)}{r},[d],1)$ at each $\mathbb{Z}$-orbit $[d]$ for $d\in \bar{\mathbb{Q}}$. By \cite[Lem.~2.1]{vanderput-singer:1997} condition (i) occurs when $q_{[d]}=0$ for every $[d]$ and $r(\infty)=\zeta_m$. By Proposition~\ref{indsum}, condition (ii) occurs when $q_{[d]}=0$ for every $[d]$. 
\end{rem}

\begin{rem} We refer to \cite[\S4]{hendriks-singer:1999} for a general discussion, in the context of the classical $\sigma$-Picard-Vessiot theory of \cite{vanderput-singer:1997}, of a family of difference equations that includes \eqref{dih-mateq} as a special case, and of how this family is related to the phenomenon of \emph{interlacing}, which is one of the most striking differences between the Picard-Vessiot theories for differential equations \cite{vanderput-singer:2003} and for difference equations \cite{vanderput-singer:1997}.
\end{rem}

\section{Groups containing $\mathrm{SL}_2$} \label{large-sec}

We recall the notation introduced in the previous sections: $k=C(x)$, where $C$ is a $\delta$-closure of $\bar{\mathbb{Q}}$, $\sigma$ denotes the $C$-linear automorphism of $k$ defined by $\sigma(x)=x+1$, and $\delta(x)=1$. We write $H$ for the $\sigma$-Galois group and $G$ for the $\sigma\delta$-Galois group for \begin{equation}\label{difeqsl2}\sigma(Y)=\begin{pmatrix} 0 & 1 \\ -b & -a\end{pmatrix}Y\end{equation} over $k$, where $a,b\in\bar{\mathbb{Q}}(x)$ and $b\neq 0$. In this section we consider the case where $a\neq 0$ and there are no solutions in $\bar{\mathbb{Q}}(x)$ to \eqref{ric1} nor to \eqref{ric2}, which is equivalent to the condition that $\mathrm{SL}_2(C)\subseteq H$ by the results of \cite{hendriks:1998} summarized in \S\ref{hendriks-sec}.

The following result will allow us to reduce the computation of $G$ in Theorem~\ref{g-large} to the computation of the $\sigma\delta$-Galois group $\mathrm{det}(G)$ for the first-order equation $\sigma(y)=by$.

\begin{prop}\label{unimod-dense} If $\mathrm{SL}_2(C)\subseteq H$, then $\mathrm{SL}_2(C)\subseteq G$.\end{prop}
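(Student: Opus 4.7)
The plan is to combine the Zariski density of $G$ in $H$ (Proposition~\ref{dense}) with Cassidy's classification of Zariski-dense differential algebraic subgroups of the simple algebraic group $\mathrm{SL}_2(C)$, and then invoke the inverse problem results of \cite{arreche-singer:2016} to rule out the $\delta$-constant possibility.

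First I would show that $N := G \cap \mathrm{SL}_2(C)$ is Zariski-dense in $\mathrm{SL}_2(C)$. Because $\mathrm{SL}_2(C)$ is normal in $\mathrm{GL}_2(C)$, $N$ is normal in $G$, so its Zariski closure $\bar{N}$ is a closed normal subgroup of $\bar{G}$. By Proposition~\ref{dense} $\bar{G}=H$, so $\bar{N}$ is a closed normal subgroup of $H$ contained in $\mathrm{SL}_2(C)$; being normal in $\mathrm{SL}_2(C)$ and that group almost simple, $\bar{N}$ is either contained in the center $\{\pm I\}$ or equals $\mathrm{SL}_2(C)$. If $\bar{N}$ were finite, then $G/N$ would be Zariski-dense in $H/\mathrm{SL}_2(C) \simeq \mathrm{det}(H) \subseteq \mathbb{G}_m(C)$, forcing $\dim H \leq 1 + \dim \bar{N} = 1$, contradicting $H \supseteq \mathrm{SL}_2(C)$. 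Hence $N$ is Zariski-dense in $\mathrm{SL}_2(C)$.

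Next I would appeal to Cassidy's classification (see \cite{cassidy:1972} and its sequels), which says that a Zariski-dense differential algebraic subgroup of $\mathrm{SL}_2(C)$ is conjugate to $\mathrm{SL}_2(F)$ for some $\delta$-subfield $F$ of $C$ containing $C^\delta$. Thus either $N=\mathrm{SL}_2(C)$ and we are done, or $N$ is conjugate in $\mathrm{SL}_2(C)$ to a proper subgroup of the form $\mathrm{SL}_2(F)$, and in particular $N$ is a $\delta$-constant linear differential algebraic group (in the sense of Definition~\ref{ldag-def}).

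The main obstacle is ruling out this second alternative. Here I would invoke results from \cite{arreche-singer:2016}: if $N$ is $\delta$-constant, then the image of $G$ in $\mathrm{PGL}_2(C)$ is $\delta$-constant, which (by the projective analogue of Proposition~\ref{dconst} proved there) forces \eqref{difeqsl2} to be projectively integrable, i.e.\ there exists $B\in \mathfrak{gl}_2(k)$ and $\lambda\in k$ with $\sigma(B) = ABA^{-1} + \delta(A)A^{-1} - \lambda I$. The characterization in \cite{arreche-singer:2016} of the equations for which this happens then forces a rational solution of either the Riccati equation~\eqref{ric1} or the second Riccati equation~\eqref{ric2}, contradicting the hypothesis that $\mathrm{SL}_2(C)\subseteq H$ (which by the summary of Hendriks' algorithm in \S\ref{hendriks-sec} is equivalent to neither Riccati equation admitting a solution in $\bar{\mathbb{Q}}(x)$). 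This contradiction completes the proof.
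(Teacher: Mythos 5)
Your proposal follows essentially the same route as the paper: show $G\cap\mathrm{SL}_2(C)$ is Zariski-dense in $\mathrm{SL}_2(C)$ via normality and the almost-simplicity of $\mathrm{SL}_2$, invoke Cassidy's classification of Zariski-dense differential algebraic subgroups, and rule out the $\delta$-constant alternative by appealing to the projective-integrability results of \cite{arreche-singer:2016}, exactly as in the paper's proof (which uses \cite[Lem.~11]{mitschi-singer:2012a} to pass from $G\cap\mathrm{SL}_2(C)=\mathrm{SL}_2(C^\delta)$ to $G\subseteq\mathrm{SL}_2(C^\delta)\cdot\mathrm{Scal}(C)$ and then to solvability of $H$).

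One local justification does not hold up as written. In the case $\bar N$ finite, you claim ``$G/N$ Zariski-dense in $\det(H)\subseteq\mathbb{G}_m(C)$ forces $\dim H\leq 1+\dim\bar N$.'' This inequality would require that $\overline{G}\cap\mathrm{SL}_2(C)=\overline{G\cap\mathrm{SL}_2(C)}$, which is precisely what fails here: an algebraic group can contain a Zariski-dense abstract subgroup that is abstractly isomorphic to a subgroup of $\mathbb{G}_m(C)$ while having large dimension, so the bound $\dim(H/\bar N)\leq 1$ does not follow from $G/N\hookrightarrow\mathbb{G}_m(C)$. The correct way to close this case is the paper's commutator argument: $[G,G]\subseteq G\cap\mathrm{SL}_2(C)\subseteq\{\pm 1\}$, so (passing to $G^0$, whose closure is $H^0$ by \cite[proof of Cor.~3.7]{minchenko-ovchinnikov-singer:2013b}) the image of the continuous commutator map on the connected group $G^0$ is connected and hence trivial, so $G^0$ and therefore $H^0=\overline{G^0}$ is commutative, contradicting $\mathrm{SL}_2(C)\subseteq H^0$. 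With that repair, and with the normalizer lemma of Mitschi--Singer made explicit in the final step, your argument coincides with the paper's.
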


\begin{proof} We begin by showing that $G\cap\mathrm{SL}_2(C)$ is Zariski-dense in $\mathrm{SL}_2(C)$. We denote the Zariski closure of a subset $V\subseteq\mathrm{GL}_2(C)$ by $\overline{V}$. By Proposition~\ref{dense}, $\overline{G}=H$. Let us first assume that $G$ is connected, so $H$ is also connected by \cite[Cor.~3.7]{minchenko-ovchinnikov-singer:2013b}. Since $G\cap\mathrm{SL}_2(C)$ is normal in $G$, by \cite[Lem.~3.8]{minchenko-ovchinnikov:2011} $\overline{G\cap\mathrm{SL}_2(C)}$ is a normal algebraic subgroup of $\overline{G}$. Hence, $\overline{G\cap\mathrm{SL}_2(C)}$ is also a normal algebraic subgroup of $\mathrm{SL}_2(C)\subseteq H= \overline{G}$. Hence, $\overline{G\cap\mathrm{SL}_2(C)}$ is either $\mathrm{SL}_2(C)$ or a subgroup of $\{\pm 1\}$.

We proceed by contradiction, and assume that $\overline{G\cap\mathrm{SL}_2(C)}\subseteq\{\pm 1\}$. Then $G\cap\mathrm{SL}_2(C)\subseteq\{\pm 1\}$. Since the quotient $G/(G\cap\mathrm{SL}_2(C))\simeq\mathrm{det}(G)$ is commutative, the image $[G,G]$ of the commutator map $G\times G\rightarrow G:(g,h)\mapsto ghg^{-1}h^{-1}$ is contained in $G\cap\mathrm{SL}_2(C)\subseteq\{\pm 1\}$. But since $G$ is connected and the commutator map is continuous, this image is also connected, so $[G,G]=\{1\}$ and therefore $G$ is commutative. But this would imply that $\overline{G}=H$ is also commutative (since commutativity is a closed condition), contradicting the assumption that $\mathrm{SL}_2(C)\subseteq H$. We have just shown that if $G$ is connected, then $G\cap\mathrm{SL}_2(C)$ is Zariski-dense in $\mathrm{SL}_2(C)$.

If $G$ is not connected, let $G^0$ denote the connected component of the identity in $G$. It follows from \cite[proof of Cor.~3.7]{minchenko-ovchinnikov-singer:2013b} that $\overline{G^0}=H^0$. Now $\mathrm{SL}_2(C)\subseteq\overline{G^0}$, since $\mathrm{SL}_2(C)$ is connected, and the argument above shows that $G^0\cap\mathrm{SL}_2(C)$ is Zariski-dense in $\mathrm{SL}_2(C)$, concluding the proof $G\cap\mathrm{SL}_2(C)$ is Zariski-dense in $\mathrm{SL}_2(C)$.

By \cite[Prop.~42]{cassidy:1972}, a Zariski-dense differential-algebraic subgroup of $\mathrm{SL}_2(C)$ is either $\mathrm{SL}_2(C)$ or conjugate to $\mathrm{SL}_2(C^\delta)$. To see that $G\cap\mathrm{SL}_2(C)$ cannot be conjugate to $\mathrm{SL}_2(C^\delta)$, let us assume without loss of generality that $G\cap\mathrm{SL}_2(C)=\mathrm{SL}_2(C^\delta)$ and obtain a contradiction. Since $G$ normalizes $\mathrm{SL}_2(C^\delta)$, \cite[Lem.~11]{mitschi-singer:2012a} implies that \begin{equation}\label{projconst}G\subset \mathrm{GL}_2(C^\delta)\cdot\mathrm{Scal}(C)=\mathrm{SL}_2(C^\delta)\cdot\mathrm{Scal}(C),\end{equation} where $\mathrm{Scal}(C)$ denotes the group of scalar matrices in $\mathrm{GL}_2(C)$. This means that $G$ is projectively $\delta$-constant in the sense of \cite{arreche-singer:2016}, where it is shown that \eqref{projconst} implies that $H$ and $G$ are solvable, contradicting our assumption that $\mathrm{SL}_2(C)\subseteq H$. This concludes the proof that $\mathrm{SL}_2(C)\subseteq G$. \end{proof}

\begin{thm} \label{g-large} If $\mathrm{SL}_2(C)\subseteq H$, then $G$ is the subgroup of $\mathrm{GL}_2(C)$ defined by one of the following conditions. \begin{enumerate}
\item[(i)] There exists a nonzero element $f\in\bar{\mathbb{Q}}(x)$ and a primitive $m^\text{th}$ root of unity $\zeta_m$ such that $b=\zeta_m\frac{\sigma(f)}{f}$ if and only if $G=H=\{T\in\mathrm{GL}_2(C) \ | \ \mathrm{det}(T)^m=1\}$.
\item[(ii)] Case (i) does not hold and there exists an element $f\in\bar{\mathbb{Q}}(x)$ such that $\sigma(f)-f=\frac{\delta(b)}{b}$ if and only if $G=\{T\in\mathrm{GL}_2(C) \ | \ \delta(\mathrm{det}(T))=0\}$.
\item[(iii)] If neither (i) nor (ii) holds, then $G=H=\mathrm{GL}_2(C)$.
\end{enumerate}
\end{thm}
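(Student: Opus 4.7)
The plan is to reduce the computation of $G$ to that of its determinant $\mathrm{det}(G)$. By Proposition~\ref{unimod-dense}, the hypothesis $\mathrm{SL}_2(C) \subseteq H$ upgrades to $\mathrm{SL}_2(C) \subseteq G$, so $G$ is completely determined by $\mathrm{det}(G)$ via $G = \{T \in \mathrm{GL}_2(C) \mid \mathrm{det}(T) \in \mathrm{det}(G)\}$. Moreover, since $\mathrm{det}(A) = b$ for the companion matrix $A$ of \eqref{difeqsl2}, the group $\mathrm{det}(G)$ coincides with the $\sigma\delta$-Galois group of the first-order equation $\sigma(y) = by$, which is a differential algebraic subgroup of $\mathbb{G}_m(C)$.

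First I would compute the classical $\sigma$-Galois group $\mathrm{det}(H)$ of $\sigma(y) = by$. By \cite[Cor.~2.3]{vanderput-singer:1997}, $\mathrm{det}(H)$ is finite of order $m$ precisely when $b = \zeta_m \sigma(f)/f$ for some primitive $m$-th root of unity $\zeta_m$ and some $f \in \bar{\mathbb{Q}}(x)$, in which case $\mathrm{det}(H) = \mu_m$; the Zariski-density of $\mathrm{det}(G)$ in $\mathrm{det}(H)$ (Proposition~\ref{dense}) forces $\mathrm{det}(G) = \mu_m$ as well, and case (i) follows. If case (i) fails, the only proper algebraic subgroups of $\mathbb{G}_m(C)$ are finite, so Zariski-density forces $\mathrm{det}(H) = \mathbb{G}_m(C)$, and hence $\mathrm{det}(G)$ must be Zariski-dense in $\mathbb{G}_m(C)$.

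Next, assuming case (i) fails, Proposition~\ref{classification} shows that $\mathrm{det}(G)$ is cut out by a linear $\delta$-polynomial applied to the logarithmic derivative, the alternative $\mu_\ell$ being excluded by Zariski density. Applying Proposition~\ref{dconst} to the scalar equation $\sigma(y) = by$, the group $\mathrm{det}(G)$ lies in $\mathbb{G}_m(C^\delta)$ if and only if there exists $f \in k$ with $\sigma(f) - f = \delta(b)/b$ (and we may take $f \in \bar{\mathbb{Q}}(x)$ by Remark~\ref{c-ok}), which is case (ii). In this case, the only Zariski-dense differential algebraic subgroup of the $\delta$-constant group $\mathbb{G}_m(C^\delta)$ is itself, so $\mathrm{det}(G) = \mathbb{G}_m(C^\delta)$ and hence $G = \{T \in \mathrm{GL}_2(C) \mid \delta(\mathrm{det}(T)) = 0\}$. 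If neither (i) nor (ii) holds, I would invoke Corollary~\ref{diffprod} with a single variable and $a_1 = b$ to conclude that $y$ is $\delta$-transcendental over $k$: any nonzero integer $n$ and $f \in k$ with $n\delta(b)/b = \sigma(f) - f$ would, since $n\delta(b)/b = \delta(b^n)/b^n$, make the $\sigma\delta$-Galois group of $\sigma(y) = b^n y$ $\delta$-constant by Proposition~\ref{dconst}, and in characteristic zero the relation $\delta(c^n) = nc^{n-1}\delta(c) = 0$ forces $\delta(c) = 0$ on the image group, so $\mathrm{det}(G)$ itself would be $\delta$-constant, contradicting the failure of case (ii). Therefore $\mathrm{det}(G) = \mathbb{G}_m(C)$ and $G = \mathrm{GL}_2(C)$.

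The step most likely to require care is the exclusion argument in case (iii), namely verifying the equivalence between the failure of case (ii) (no first-order witness $f$) and the absence of any integer relation on $\delta(b)/b$ modulo differences $\sigma(f)-f$; once that equivalence is in hand, Corollary~\ref{diffprod} yields the $\delta$-transcendence of $y$ directly. Everything else amounts to routine assembly of Proposition~\ref{unimod-dense}, Proposition~\ref{dense}, Proposition~\ref{classification}, and Proposition~\ref{dconst} in order to read off $G$ from the now-known structure of $\mathrm{det}(G)$.
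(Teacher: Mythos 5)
Your proposal is correct and follows essentially the same route as the paper: reduce to $\mathrm{det}(G)$ via Proposition~\ref{unimod-dense} and the factorization $G=\mathrm{SL}_2(C)\cdot D$, then identify $\mathrm{det}(G)$ with the $\sigma\delta$-Galois group of $\sigma(y)=b y$ and compute it case by case using Proposition~\ref{dense}, Proposition~\ref{dconst}, and Corollary~\ref{diffprod}. The only cosmetic difference is in case (iii), where you exclude an integer relation $n\,\delta(b)/b=\sigma(f)-f$ by an $n$-th power/image-group argument, whereas one can simply divide the witness $f$ by $n$ (characteristic zero) to land back in the excluded case (ii).
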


\begin{proof} Let $D\subseteq\mathrm{Scal}(C)$ denote the subgroup of scalar matrices in $\mathrm{GL}_2(C)$ whose determinant lies in the subgroup $\mathrm{det}(G)\subseteq\mathbb{G}_m(C)$. By Proposition~\ref{unimod-dense}, we know that $\mathrm{SL}_2(C)\subseteq G$, which implies that $G\subseteq \mathrm{SL}_2(C)\cdot D$. Since the images of $G$ and $\mathrm{SL}_2(C)\cdot D$ under the determinant map are equal, we have that $G=\mathrm{SL}_2(C)\cdot D$. Therefore, the computation of $G$ is reduced to that of $\mathrm{det}(G)$.

Since $\mathrm{det}(H)$ is the $\sigma$-Galois group and $\mathrm{det}(G)$ is the $\sigma\delta$-Galois group for $\sigma(y)=by$ over $k$, it follows from \cite[\S3]{hendriks:1998} that $\mathrm{det}(H)$ is finite of order $m$ if and only if $b=\zeta_m\frac{\sigma(f)}{f}$ for some $f\in\bar{\mathbb{Q}}(x)$ and some primitive root of unity $\zeta_m$, in which case $\mathrm{det}(G)=\mathrm{det}(H)$ by Proposition~\ref{dense}, which implies (i).

To show (ii), we observe that $\sigma(f)=f+\frac{\delta(b)}{b}$ is precisely the integrability condition for the first-order system $\sigma(y)=by$, and we conclude by Proposition~\ref{dconst}.

To show (iii), let $Z$ be a fundamental solution matrix for \eqref{difeqsl2} and $z=\mathrm{det}(Z)$ the Casoratian determinant, so that $\sigma(z)=bz$. If condition (ii) fails, it follows from \cite[Lem.~2.4]{hardouin:2008} (cf.~Remark~\ref{c-ok}) that there is no $f\in k$ such that $\frac{\delta(b)}{b}=\sigma(f)-f$, whence $z$ is $\delta$-transcendental over $k$ by Corollary~\ref{diffprod}. By \cite[Prop.~6.26]{hardouin-singer:2008}, this implies that the $\delta$-dimension of $\mathrm{det}(G)$ is $1$, and therefore $\mathrm{det}(G)=\mathbb{G}_m(C)$. \end{proof}

\begin{rem} In order to compute $G$ when $\mathrm{SL}_2(C)\subseteq H$, we proceed as follows. We begin by computing the discrete residues $r_{[d]}=\mathrm{dres}_x(\frac{\delta(b)}{b},[d],1)$ at each $\mathbb{Z}$-orbit $[d]$ for $d\in \bar{\mathbb{Q}}$. By \cite[Lem.~2.1]{vanderput-singer:1997}, condition (i) occurs when $r_{[d]}=0$ for every $[d]$ and $r(\infty)=\zeta_m$. By Proposition~\ref{indsum}, condition (ii) occurs when $r_{[d]}=0$ for every $[d]$.
\end{rem}

\section{From $\sigma\delta$-Galois groups to $\sigma\delta$-algebraic relations} \label{relations-sec}

In this section we assume that the $\sigma\delta$-Galois group $G\subseteq\mathrm{GL}_2(C)$ for \begin{equation} \label{relations-eq} \sigma^2(y)+a\sigma(y)+by=0\end{equation} has already been computed, and show how to obtain explicitly all the $\sigma\delta$-relations among the solutions to \eqref{relations-eq} from the knowledge of $G$. Here we aim to convince the nonexpert that, once $G$ has been computed, it is straightforward to compute the $\sigma\delta$-algebraic relations among the solutions without any knowledge of $\sigma\delta$-Galois theory. More specifically, for each possible defining equation for $G$ we will deduce a relation of a specific form that is satisfied by the solutions to \eqref{relations-eq}, but with an undetermined coefficient. Each undetermined coefficient is obtained by finding all solutions in $\bar{\mathbb{Q}}(x)$ to a concrete $\sigma$-equation defined over $\bar{\mathbb{Q}}(x)$.

Recall that the coefficients $a,b\in\bar{\mathbb{Q}}(x)$, the $\bar{\mathbb{Q}}$-linear automorphism $\sigma:x\mapsto x+1$, $\delta(x)=1$, and $C$ denotes a $\delta$-closure of $\bar{\mathbb{Q}}$. We denote by $k=C(x)$ the $\sigma\delta$-field ring over $\bar{\mathbb{Q}}(x)$ obtained by setting $\sigma|_C=\mathrm{id}_C$. Given a basis of solutions $\{y_1,y_2\}$ for \eqref{relations-eq}, it will be convenient to consider the fundamental solution matrix \begin{equation}\label{rel-fund}Z=\begin{pmatrix} y_1 & y_2 \\ \sigma(y_1) & \sigma(y_2)\end{pmatrix}, \quad \text{which satisfies} \quad \sigma(Z)=\begin{pmatrix} 0 & 1 \\ -b & -a\end{pmatrix}Z.\end{equation}

\subsection{$G$ is diagonalizable} \label{diag-subsec}

Let us first assume that $G$ is diagonalizable as in \S\ref{diagonalizable-sec}, and the embedding $G\hookrightarrow\mathrm{GL}_2(C)$ corresponding to the fundamental solution matrix \eqref{rel-fund} is given by \begin{equation}\label{diag-emb} \gamma\left(\begin{pmatrix} y_1& y_2 \\ \sigma(y_1) & \sigma(y_2)\end{pmatrix}\right)=\begin{pmatrix} y_1& y_2 \\ \sigma(y_1) & \sigma(y_2)\end{pmatrix}\begin{pmatrix} \alpha_\gamma & 0 \\ 0 & \lambda_\gamma\end{pmatrix}=\begin{pmatrix} \alpha_\gamma y_1 & \lambda_\gamma y_2\\ \alpha_\gamma\sigma(y_1) & \lambda_\gamma\sigma(y_2)\end{pmatrix} \end{equation} for some $\alpha_\gamma,\lambda_\gamma\in C^\times$. Since $$\gamma\left(\frac{\sigma(y_1)}{y_1}\right)=\frac{\alpha_\gamma\sigma(y_1)}{\alpha_\gamma y_1}=\frac{\sigma(y_1)}{y_1} \qquad \text{and} \qquad \gamma\left(\frac{\sigma(y_2)}{y_2}\right)=\frac{\lambda_\gamma\sigma(y_2)}{\lambda_\gamma y_2}=\frac{\sigma(y_2)}{y_2}$$ for every $\gamma\in G$, it follows from Theorem~\ref{correspondence} that $\frac{\sigma(y_1)}{y_1}=u\in k$ and $\frac{\sigma(y_2)}{y_2}=v\in k$.

Since $G$ is Zariski dense in the $\sigma$-Galois group $H$ associated to \eqref{relations-eq}, if $G$ is diagonalizable then so is $H$. As we discussed in \S\ref{hendriks-sec}, it was already proved in \cite{hendriks:1998} that $H$ is diagonalizable if and only if there exists a basis of solutions $\{y_1,y_2\} $ for \eqref{relations-eq} that satisfy $\sigma(y_1)=uy_1$ and $\sigma(y_2)=vy_2$ for some $u,v\in\bar{\mathbb{Q}}(x)$, where the coefficients $u$ and $v$ are distinct solutions to the Riccati equation \eqref{ric1}: $$u\sigma(u)+au+b=0=v\sigma(v)+av+b,$$ and the computation of explicit coefficients $u,v\in\bar{\mathbb{Q}}(x)$ as above is carried out in \cite[pp.~450--451]{hendriks:1998}. 

Let us now assume that there exist integers $m$ and $n$ such that $\alpha_\gamma^m\lambda_\gamma^n=1$ as in Proposition~\ref{galdiag}(ii). Then $\gamma(y_1^my_2^n)=\alpha_\gamma^m\lambda_\gamma^n(y_1^my_2^n)=y_1^my_2^n$, and therefore $y_1^my_2^n=f\in k$ by Theorem~\ref{correspondence}. The coefficient $f$ is obtained by solving $\sigma(f)=u^mv^nf$ in $\bar{\mathbb{Q}}(x)$ \cite[Lem.~2.5]{hardouin:2008} (cf.~Remark~\ref{c-ok}). 

Let us now assume that $\delta(\alpha_\gamma)=0$ for every $\gamma\in G$ as in Proposition~\ref{galdiag}(iii). Then, since $\gamma(\frac{\delta(y_1)}{y_1})=\frac{\delta(y_1)}{y_1}+\frac{\delta(\alpha_\gamma)}{\alpha_\gamma}=\frac{\delta(y_1)}{y_1}$, and therefore $\frac{\delta(y_1)}{y_1}=f\in k$ by Theorem~\ref{correspondence}. The coefficient $f$ is obtained by solving $\sigma(f)-f=\frac{\delta(u)}{u}$ in $\bar{\mathbb{Q}}(x)$ \cite[Lem.~2.4]{hardouin:2008} (cf.~Remark~\ref{c-ok}). A similar argument shows that if $\delta(\lambda_\gamma)=0$ for every $\gamma\in G$ as in Proposition~\ref{galdiag}(iii), then $\delta(y_2)=fy_2$, where $f\in\bar{\mathbb{Q}}(x)$ satisfies $\sigma(f)-f=\frac{\delta(v)}{v}$.

Let us now assume that Proposition~\ref{galdiag}(ii) does not hold, but there do exist integers $m$ and $n$ as in Proposition~\ref{galdiag}(iv), such that $\delta(\alpha_\gamma^m\lambda_\gamma^n)=0$ for every $\gamma\in G$. Then, since $$\gamma\left(\frac{\delta(y_1^my_2^n)}{y_1^my_2^n}\right)=\frac{\delta(\alpha_\gamma^m\lambda_\gamma^ny_1^my_2^n)}{\alpha_\gamma^m\lambda_\gamma^ny_1^my_2^n}=\frac{\delta(y_1^my_2^n)}{y_1^my_2^n}+\frac{\delta(\alpha_\gamma^m\lambda_\gamma^n)}{\alpha_\gamma^m\lambda_\gamma^n}=\frac{\delta(y_1^my_2^n)}{y_1^my_2^n},$$ it follows from Theorem~\ref{correspondence} that $\delta(y_1^my_2^n)/y_1^my_2^n=f\in k$. The coefficient $f$ in the relation $\delta(y_1^my_2^n)=fy_1^my_2^n$ is obtained by solving $\sigma(f)-f=\delta(u^mv^n)/u^mv^n$ in $\bar{\mathbb{Q}}(x)$ \cite[Lem.~2.4]{hardouin:2008} (cf.~Remark~\ref{c-ok}).

Let us now assume that $G=\mathbb{G}_m(C)$ as in Proposition~\ref{galdiag}(v). Then by \cite[Lem.~6.26]{hardouin-singer:2008}, the $\delta$-transcendence degree of the $\sigma\delta$-PV ring $k\{y_1,y_2, (y_1y_2)^{-1}\}_\delta$ for \eqref{relations-eq} over $k$ is $2$, and therefore $y_1$ and $y_2$ are $\delta$-independent over $k$.

\subsection{$G$ is reducible} Let us now assume that $G$ is reducible but not diagonalizable as in \S\ref{reducible-sec}, and that the embedding $G\hookrightarrow \mathrm{GL}_2(C)$ corresponding to the fundamental solution matrix \eqref{rel-fund} is given by \begin{equation}\label{red-emb} \gamma\left(  \begin{pmatrix} y_1 & y_2 \\ \sigma(y_1) & \sigma(y_2)\end{pmatrix}  \right)= \begin{pmatrix} y_1 & y_2 \\ \sigma(y_1) & \sigma(y_2)\end{pmatrix}\begin{pmatrix} \alpha_\gamma & \beta_\gamma \\ 0 & \lambda_\gamma\end{pmatrix}=\begin{pmatrix} \alpha_\gamma y_1 & \beta_\gamma y_1 + \lambda_\gamma y_2 \\ \alpha_\gamma\sigma(y_1) & \beta_\gamma\sigma(y_1)+\lambda_\gamma\sigma(y_2)\end{pmatrix}\end{equation} for some $\alpha_\gamma,\beta_\gamma,\lambda_\gamma\in C$ such that $\alpha_\gamma\lambda_\gamma\neq 0$. Since $$\gamma\left(\frac{\sigma(y_1)}{y_1}\right)=\frac{\alpha_\gamma\sigma(y_1)}{\alpha_\gamma y_1}=\frac{\sigma(y_1)}{y_1}$$ for every $\gamma\in G$, it follows from Theorem~\ref{correspondence} that $\frac{\sigma(y_1)}{y_1}=u\in k$.

Since $G$ is Zariski-dense in the $\sigma$-Galois group $H$ associated to \eqref{relations-eq}, the reducibility of $G$ implies that of $H$. As we discussed in \S\ref{hendriks-sec}, it was already proved in \cite{hendriks:1998} that $H$ is reducible but not diagonalizable if and only if there is a solution $y_1$ for \eqref{relations-eq} that satisfies $\sigma(y_1)=uy_1$ for some $u\in\bar{\mathbb{Q}}(x)$, and the coefficient $u$ is obtained as the unique solution in $\bar{\mathbb{Q}}(x)$ to the Riccati equation \begin{equation}\label{rel-ric1}u\sigma(u)+au+b=0.\end{equation} As we discussed in \S\ref{reducible-sec}, the existence of $u\in\bar{\mathbb{Q}}(x)$ satisfying \eqref{rel-ric1} implies the factorization of the operator implicit in \eqref{relations-eq} \begin{equation}\label{relations-factorization} \sigma^2+a\sigma+b=(\sigma-\tfrac{b}{u})\circ(\sigma-u),\end{equation} which implies that the element $0\neq y_0=\sigma(y_2)-uy_2$ satisfies $\sigma(y_0)=\frac{b}{u}y_0$. Hence, we can compute all the $\delta$-algebraic relations satisfied by $y_1$ and $y_0$ over $k$ as in \S\ref{diag-subsec}, after replacing $v$ with $\frac{b}{u}$ and $y_2$ with $y_0$. 

In order to simplify the discussion, let us make the supplementary assumption that $H$ and $G$ are connected. As we explained in Remark~\ref{connected-rem}, this is sufficient to deduce all the $\sigma\delta$-algebraic relations satisfied by $y_1$ and $y_2$ over $k$. Since $R_u(G)$ is the $\sigma\delta$-Galois group for $\sigma(y_2)-uy_2=y_0$ over the field of fractions $L$ of the $\sigma\delta$-PV ring $k\{y_1,y_0, (y_1y_0)^{-1}\}_\delta$ for \eqref{redeq} over $k$, it follows from \cite[Prop.~6.26]{hardouin-singer:2008} that if $R_u(G)=\mathbb{G}_a(C)$, then the transcendence degree of $y_2$ over $L$ is $1$, and therefore $y_2$ is $\delta$-transcendental over $L$ whenever $R_u(G)=\mathbb{G}_a(C)$, in which case the only $\sigma\delta$-algebraic relations among $y_1$ and $y_2$ are all $\delta$-algebraic consequences of the relations satisfied by $y_1$ and $y_0=\sigma(y_2)-uy_2$ over $k$.

It follows from Proposition~\ref{bigut2} and Proposition~\ref{wrat} that the only case where $R_u(G)\neq\mathbb{G}_a(C)$ is when $G$ is one of the groups described in Proposition~\ref{wrat}(iii). So let us assume that $\alpha_\gamma=\lambda_\gamma$ for every $\gamma\in G$ and there exists a nonzero linear $\delta$-polynomial $\mathcal{L}\in\bar{\mathbb{Q}}\{Y\}_\delta$ such that $\beta_\gamma=\alpha_\gamma\mathcal{L}(\frac{\delta(\alpha_\gamma)}{\alpha_\gamma})$ for every $\gamma\in G$. Then from \eqref{red-emb} we obtain $$\gamma\left(\frac{y_2}{y_1}\right)=\frac{\lambda_\gamma y_2+\beta_\gamma y_1}{\alpha_\gamma y_1}=\frac{\alpha_\gamma y_2+\alpha_\gamma\mathcal{L}(\tfrac{\delta(\alpha_\gamma)}{\alpha_\gamma})y_1}{\alpha_\gamma y_1}=\frac{y_2}{y_1}+\mathcal{L}\left(\frac{\delta(\alpha_\gamma)}{\alpha_\gamma}\right).$$ On the other hand, $$\gamma\left(\mathcal{L}\left(\frac{\delta(y_1)}{y_1}\right)\right)=\mathcal{L}\left(\frac{\delta(\alpha_\gamma y_1)}{\alpha_\gamma y_1}\right)=\mathcal{L}\left(\frac{\delta(y_1)}{y_1}\right)+\mathcal{L}\left(\frac{\delta(\alpha_\gamma)}{\alpha_\gamma}\right).$$ Therefore, $$\gamma\left(\frac{y_2}{y_1}-\mathcal{L}\left(\frac{\delta(y_1)}{y_1}\right)\right)=\frac{y_2}{y_1}-\mathcal{L}\left(\frac{\delta(y_1)}{y_1}\right)=g\in k.$$ Thus, from our assumptions on the defining equations for the Galois group we have obtained the existence of a rational function $g\in k$ such that $y_2=y_1\mathcal{L}(\frac{\delta(y_1)}{y_1})+gy_1$. To compute the coefficient $g$, we take this expression for $y_2$ as an ansatz and proceed as follows. Recall that the existence of $u\in\bar{\mathbb{Q}}(x)$ satisfying \eqref{rel-ric1} is equivalent to the factorization \eqref{relations-factorization}. Now we compute \begin{align*}
0 &=\sigma^2(y_2)+a\sigma(y_2)+by_2 \\ 
&=(\sigma-\tfrac{b}{u})\circ(\sigma-u)\left(y_1\mathcal{L}\left(\frac{\delta(y_1)}{y_1}\right)+gy_1\right)\\
&=(\sigma-\tfrac{b}{u})\left[uy_1\mathcal{L}\left(\frac{\delta(y_1)}{y_1}\right)+uy_1\mathcal{L}\left(\frac{\delta(u)}{u}\right)+u\sigma(g)y_1-uy_1\mathcal{L}\left(\frac{\delta(y_1)}{y_1}\right)-ugy_1\right] \\
&=(\sigma-\tfrac{b}{u})\left[\left( \mathcal{L}\left(\frac{\delta(u)}{u}\right) + \sigma(g) - g\right)uy_1\right] \\
&= \left[u\sigma(u)\mathcal{L}\left(\frac{\delta(\sigma(u))}{\sigma(u)}\right)+u\sigma(u)\sigma^2(g)-u\sigma(u)\sigma(g)-b\mathcal{L}\left(\frac{\delta(u)}{u}\right)-b\sigma(g)+bg\right]y_1,\end{align*} and therefore $g\in k$ must satisfy the following linear inhomogeneous equation (where we have used the fact that $au=-u\sigma(u)-b$ in obtaining the coefficient of $\sigma(g)$ below):\begin{equation}\label{f-inhom} [u\sigma(u)]\sigma^2(g)+[au]\sigma(g)+[b]g=\left[b\mathcal{L}\left(\frac{\delta(u)}{u}\right)-u\sigma(u)\mathcal{L}\left(\frac{\delta(\sigma(u))}{\sigma(u)}\right)\right].\end{equation} Since all the coefficients of \eqref{f-inhom} belong to $\bar{\mathbb{Q}}(x)$, we may take $g\in \bar{\mathbb{Q}}(x)$ by Remark~\ref{c-ok}. We remark that this coefficient $g$ may be computed in a different way, following the proof of Proposition~\ref{wrat}(iii): if $w\in\bar{\mathbb{Q}}(x)$ is a solution to $\sigma(w)=\frac{b}{u\sigma(u)}w$, then $g\in\bar{\mathbb{Q}}(x)$ also satisfies $\sigma(g)-g=w-\mathcal{L}(\frac{\delta(u)}{u})$.

\subsection{$G$ is irreducible and imprimitive}

Let us now assume that $G$ is irreducible and imprimitive as in \S\ref{dihedral-sec}, so that $G$ is one of the subgroups of \begin{equation}\label{rel-dih-gp}\{\pm1\}\ltimes\mathbb{G}_m(C)^2= \left\{\begin{pmatrix} \alpha & 0 \\ 0 & \lambda\end{pmatrix} \ \middle| \ \alpha,\lambda\in C, \ \alpha\lambda\neq 0\right\} \cup\left\{\begin{pmatrix} 0 & \beta \\ \epsilon & 0\end{pmatrix} \ \middle| \ \beta,\epsilon\in C, \ \beta\epsilon\neq 0\right\}\end{equation} described in Proposition~\ref{dihedral-complete}. By \cite[Lem.~4.5]{hendriks:1998}, we know that \eqref{relations-eq} is equivalent to \begin{equation}\label{relations-dih-eq}\sigma^2(y)+ry=0.\end{equation} By \cite[Cor.~4.3]{hendriks-singer:1999}, all the solutions of \eqref{relations-dih-eq} can be expressed as the \emph{interlacing} \cite[Def.~3.2]{hendriks-singer:1999} of two hypergeometric elements. We refer to \cite[\S4]{hendriks-singer:1999} for a discussion of this phenomenon, and limit ourselves to deducing the relations satisfied by the solutions in the cases described in Proposition~\ref{dihedral-complete}.

Suppose that the matrix $T_\gamma\in\mathrm{GL}_2(C)$ associated to each $\gamma\in G$ by the choice of fundamental solution matrix $Z$ as in \eqref{rel-fund} belongs to $\{\pm 1\}\ltimes\mathbb{G}_m(C)^2$ as in \eqref{rel-dih-gp}. We will see from the explicit form of the conditions described in Proposition~\ref{dihedral-complete} that the $\sigma\delta$-algebraic relations among $y_1$ and $y_2$ are essentially captured by the \emph{Casoratian}: $$\omega=y_1\sigma(y_2)-y_2\sigma(y_1)=\mathrm{det}(Z),$$ which satisfies $\sigma(\omega)=r\omega$, so $\omega$ is hypergeometric. We remark that, since $y_1$ and $y_2$ are $C$-linearly independent, $\omega\neq 0$ by \cite[Lem.~A.2]{hendriks-singer:1999}; moreover, it is shown in \cite[proof of Lem.~A.6]{hendriks-singer:1999} that $\omega$ is invertible. We also know that $\gamma(\omega)=\mathrm{det}(T_\gamma)\omega$ for each $\gamma\in G$.

Let us first show that $y_1y_2=0$. We proceed by contradiction: assuming that $y_1y_2\neq 0$, we will show that there exists a solution $u\in\bar{\mathbb{Q}}(x)$ to the Riccati equation $u\sigma(u)+r=0$, contradicting the assumption that $H$ is irreducible by the results of \cite{hendriks:1998} summarized in \S\ref{hendriks-sec}. For each $\gamma\in G$ we see that either $\gamma(y_1)=\alpha_\gamma y_1$ and $\gamma(y_2)=\lambda_\gamma y_2$, or else $\gamma(y_1)=\epsilon_\gamma y_2$ and $\gamma(y_2)=\beta_\gamma y_1$. In any case, it follows that $\gamma(y_1y_2)=\pm\mathrm{det}(T_\gamma)y_1y_2$, and therefore $\gamma(\frac{y_1y_2}{\omega})=\pm\frac{y_1y_2}{\omega}$, whence $\gamma(\frac{y_1^2y_2^2}{\omega^2})=\frac{y_1^2y_2^2}{\omega^2}$ for every $\gamma\in G$. It follows from Theorem~\ref{correspondence} that there exists $g\in k$ such that $y_1^2y_2^2=g\omega^2$. If $y_1y_2\neq 0$, then $0\neq g\in k$ is invertible. Now we have that $$\sigma^2\left(\frac{y_1y_2}{\omega}\right)=\frac{(-ry_1)(-ry_2)}{r\sigma(r)\omega}=\frac{r}{\sigma(r)}\cdot\frac{y_1y_2}{\omega},\quad \text{and therefore} \quad\sigma^2(g)=\sigma^2\left(\frac{y_1^2y_2^2}{\omega^2}\right)=\frac{r^2}{\sigma(r)^2}\cdot\frac{y_1^2y_2^2}{\omega^2}=\frac{r^2}{\sigma(r)^2}\cdot g.$$ Since this latter equation has coefficients in $\bar{\mathbb{Q}}(x)$, we may take $g\in\bar{\mathbb{Q}}(x)$ by \cite[Lem.~2.5]{hardouin:2008} (cf.~Remark~\ref{c-ok}), and it follows from $$\frac{\sigma(g\sigma(g))}{g\sigma(g)}=\frac{\sigma^2(g)}{g}=\frac{r^2}{\sigma(r)^2}=\frac{\sigma(r^{-2})}{r^{-2}}$$ that $g\sigma(g)=cr^{-2}$ for some $c\in \bar{\mathbb{Q}}^\times$. Let us assume without loss of generality that $c=1$. There exist rational functions $g_1,g_2\in \bar{\mathbb{Q}}(x)$, unique up to constant multiple, such that $g=g_1g_2^2$ and $g_1$ is squarefree. We claim that $g_1$ is constant: otherwise, there would exist a zero (resp., pole) $d\in \bar{\mathbb{Q}}$ of $g_1$ such that $d+1$ is not a zero (resp., pole) of $g_1$, and we would have that $\pm 1=\mathrm{ord}_d(g_1\sigma(g_1))=\mathrm{ord}_d(r^{-2}g_2^{-2}\sigma(g_2^{-2}))\in 2\mathbb{Z}$, a contradiction. So we may take $g=g_2^2$ without loss of generality, and it follows that $\pm(g_2\sigma(g_2))^{-1}=r$, whence either $u=(g_2)^{-1}$ or $u=(\sqrt{-1} g_2)^{-1}$ satisfies the Riccati equation $u\sigma(u)+r=0$. This contradiction concludes the proof that $y_1y_2=0$.

In each one of the cases described in Proposition~\ref{dihedral-complete}(i), we see that $G$ is the subgroup of \eqref{rel-dih-gp} defined by the condition $(\mathrm{det}(T_\gamma))^m=1$, where $m$ is a positive integer. Therefore, $\gamma(\omega^m)=(\mathrm{det}(T_\gamma))^m\omega^m=\omega^m$ for each $\gamma\in G$, so it follows from Theorem~\ref{correspondence} that $(y_1\sigma(y_2)-y_2\sigma(y_1))^m=f\in k$, and we obtain $f$ from the relation $\sigma(f)=r^mf$, so we may take $f\in\bar{\mathbb{Q}}(x)$ by \cite[Lem.~2.5]{hardouin:2008} (cf.~Remark~\ref{c-ok}).

If $\delta(\mathrm{det}(T_\gamma))=0$ for every $\gamma\in G$ as in Proposition~\ref{dihedral-complete}(ii), then $$\gamma\left(\frac{\delta(\omega)}{\omega}\right)  = \frac{\delta(\mathrm{det}(T_\gamma)\omega)}{\mathrm{det}(T_\gamma)\omega}= \frac{\delta(\omega)}{\omega}+\frac{\delta(\mathrm{det}(T_\gamma))}{\mathrm{det}(T_\gamma)}=\frac{\delta(\omega)}{\omega}.$$ It follows from Theorem~\ref{correspondence} that $\delta(\omega)=f\omega$ for some $f\in k$, and we obtain the coefficient $f$ from the relation $\sigma(f)-f=\frac{\delta(r)}{r}$, so we may take $f\in\bar{\mathbb{Q}}(x)$ by \cite[Lem.~2.4]{hardouin:2008} (cf.~Remark~\ref{c-ok}).

Finally, if $G=\{\pm 1\}\ltimes\mathbb{G}_m(C)^2$ as in Proposition~\ref{dihedral-complete}(iii), then all the $\sigma\delta$-algebraic relations among $y_1$ and $y_2$ are $\delta$-algebraic consequences of $y_1y_2=0$ and $\sigma(\omega)=r\omega$.

\subsection{$G$ contains $\mathrm{SL}_2(C)$}

Let us now assume that $G$ is neither reducible nor imprimitive as in \S\ref{large-sec}, so $G$ is one of the subgroups of $\mathrm{GL}_2(C)$ described in Theorem~\ref{g-large}. Let $T_\gamma\in\mathrm{GL}_2(C)$ denote the matrix associated to $\gamma\in G$ by a given choice of fundamental solution matrix $Z$ as in \eqref{rel-fund}. We will see from the explicit conditions described in Theorem~\ref{g-large} that the $\sigma\delta$-algebraic relations among $y_1$ and $y_2$ are completely captured by the \emph{Casoratian} $$\omega=y_1\sigma(y_2)-y_2\sigma(y_1)=\mathrm{det}(Z).$$ Note that $\sigma(\omega)=b\omega$, and $\gamma(\omega)=\mathrm{det}(T_\gamma)\omega$ for each $\gamma\in G$. We will consider each possibility from Theorem~\ref{g-large} in turn, and deduce the corresponding relations that are satisfied by the solutions to~\eqref{relations-eq} in each case.

If $G=\{T\in\mathrm{GL}_2(C) \ | \ \mathrm{det}(T)^m=1\}$ as in Theorem~\ref{g-large}(i), then $\gamma(\omega^m)=(\mathrm{det}(T_\gamma))^m\omega^m=\omega^m$ for each $\gamma\in G$, so it follows from Theorem~\ref{correspondence} that $(y_1\sigma(y_2)-y_2\sigma(y_1))^m=f\in k$, and we obtain $f$ from the relation $\sigma(f)=b^mf$, so we may take $f\in\bar{\mathbb{Q}}(x)$ by \cite[Lem.~2.5]{hardouin:2008} (cf.~Remark~\ref{c-ok}).

If $G=\{T\in\mathrm{GL}_2(C) \ | \ \delta(\mathrm{det}(T))=0\}$ as in Theorem~\ref{g-large}, then $$\gamma\left(\frac{\delta(\omega)}{\omega}\right)  = \frac{\delta(\mathrm{det}(T_\gamma)\omega)}{\mathrm{det}(T_\gamma)\omega}= \frac{\delta(\omega)}{\omega}+\frac{\delta(\mathrm{det}(T_\gamma))}{\mathrm{det}(T_\gamma)}=\frac{\delta(\omega)}{\omega}.$$ It follows from Theorem~\ref{correspondence} that $\delta(\omega)=f\omega$ for some $f\in k$, and we obtain the coefficient $f$ from the relation $\sigma(f)-f=\frac{\delta(b)}{b}$, so we may take $f\in\bar{\mathbb{Q}}(x)$ by \cite[Lem.~2.4]{hardouin:2008} (cf.~Remark~\ref{c-ok}).

Finally, if $G=\mathrm{GL}_2(C)$ as in Theorem~\ref{g-large}(iii), then $y_1$, $y_2$, $\sigma(y_1)$, and $\sigma(y_2)$ are $\delta$-independent over $k$.

\section{Examples} \label{examples-sec}
In this section we compute the $\sigma\delta$-Galois group $G$ associated to some concrete second-order linear difference equations over $\bar{\mathbb{Q}}(x)$ with respect to the shift operator $\sigma:x\mapsto x+1$. We will first apply the algorithm of \cite{hendriks:1998} to compute the $\sigma$-Galois group $H$ associated to the equation, and then apply the procedures developed in this paper to compute $G$. Our computations are performed using the procedures ratpolysols and hypergeomsols of the Maple package LREtools.

\subsection{Example}
Let us consider \eqref{difeq} with $a=-(2x+1)$ and $b=x^2$: \begin{equation}\label{eg1}\sigma^2(y)-(2x+1)\sigma(y)+x^2y=0.\end{equation} The Maple procedure LREtools[hypergeomsols] shows that the only hypergeometric solutions to \eqref{eg1} are of the form $c\Gamma(x)$, where $c\in\bar{\mathbb{Q}}$ and the $\Gamma$ function satisfies $\sigma(\Gamma)=x\Gamma$. Therefore, the Riccati equation \eqref{ric1} admits a unique solution $u=x$ in $\bar{\mathbb{Q}}(x)$. We begin by computing the reductive quotient $G/R_u(G)$ by applying Proposition~\ref{galdiag} to the system $$\sigma(Y)=\begin{pmatrix} u & 0 \\ 0 & b/u\end{pmatrix}Y=\begin{pmatrix} x & 0 \\ 0 & x\end{pmatrix}Y,$$ we find that cases (i) and (iii) of Proposition~\ref{galdiag} do not hold, because there is no $f\in\bar{\mathbb{Q}}(x)$ such that $\frac{\delta(u)}{u}=\frac{1}{x}=\sigma(f)-f$. However, Proposition~\ref{galdiag}(ii) does hold with $m=1$, $n=-1$, and $f=1$. Hence, the $\sigma$-Galois group $H$ for \eqref{eg1} is $$H=\left\{\begin{pmatrix} \alpha & \beta \\ 0 & \alpha\end{pmatrix} \ \middle| \ \alpha,\beta\in C, \ \alpha\neq 0\right\}.$$ To compute $G$, we apply Proposition~\ref{wrat}: we find that $w\in\bar{\mathbb{Q}}(x)$ must satisfy $\sigma(w)=\frac{b}{u\sigma(u)}=\frac{x}{x+1}w$, so $w=\frac{1}{x}$. To test whether the conditions of Proposition~\ref{wrat}(iii) hold, we attempt to find an operator $\mathcal{L}\in\bar{\mathbb{Q}}[\delta]$, of smallest possible order, such that there exists $g\in\bar{\mathbb{Q}}(x)$ with $$\mathcal{L}\left(\frac{\delta(u)}{u}\right)-w=\mathcal{L}\left(\frac{1}{x}\right)-\frac{1}{x}=\sigma(g)-g.$$ And we find that $\mathcal{L}=1$ and $g=0$ satisfy these conditions, and therefore the $\sigma\delta$-Galois group for \eqref{eg1} is $$G=\left\{\begin{pmatrix} \alpha & \delta(\alpha) \\ 0 & \alpha\end{pmatrix} \ \middle| \ \alpha\in C, \ \alpha\neq 0 \right\}.$$

\subsection{Example}
Let us consider \eqref{difeq} with $a=0$ and $b=\frac{x+1}{2x}$: \begin{equation} \label{eg2} \sigma^2(y)+\tfrac{x+1}{2x}y=0.\end{equation} it follows from \cite[Lem.~4.5]{hendriks:1998} that the $\sigma$-Galois group $H$ associated to \eqref{eg2} is a subgroup of \eqref{dihedral}. To first check whether $H$ is irreducible by deciding whether \eqref{eg2} admits any hypergeometric solutions, which is equivalent to the existence of a solution $u\in\bar{\mathbb{Q}}(x)$ to the Riccati equation \eqref{ric1}. The Maple procedure LREtools[hypergeomsols] returns $0$ as the only hypergeometric solution to \eqref{eg2}, and therefore $H$ is irreducible and imprimitive. We now check the conditions of Proposition~\ref{dihedral-complete} with $r=\frac{x+1}{2x}=\frac{1}{2}\frac{\sigma(x)}{x}$, and find that case (i) does not hold, which shows that $H$ coincides with \eqref{dihedral}. However, Proposition~\ref{dihedral-complete}(iii) does hold, with $f=\frac{1}{x}$, and therefore the $\sigma\delta$-Galois group $G$ associated to \eqref{eg2} is $$G=\left\{\begin{pmatrix} \alpha & 0 \\ 0 & \lambda\end{pmatrix} \ \middle| \ \alpha,\lambda\in C, \ \alpha\lambda\neq 0, \ \delta(\alpha\lambda)=0\right\} \cup\left\{\begin{pmatrix} 0 & \beta \\ \epsilon & 0\end{pmatrix} \ \middle| \ \beta,\epsilon\in C, \ \beta\epsilon\neq 0, \ \delta(\beta\epsilon)=0\right\}.$$

\subsection{Example}
Let us consider \eqref{difeq} with $a=x$ and $b=1$: \begin{equation}\label{eg3} \sigma^2(y)+x\sigma(y)+y=0.\end{equation} It is proved in \cite[proof of Lem.~3.9]{vanderput-singer:1997} that the $\sigma$-Galois group $H$ associated to \eqref{eg3} is $\mathrm{SL}_2(C)$. To see this directly, we first apply the Maple procedure LREtools[hypergeomsols] to verify that \eqref{eg3} does not admit any hypergeometric solutions. This shows that $H$ is irreducible. To verify that $H$ is primitive, we have to verify that the Riccati equation \eqref{ric2} does not admit any solutions $e\in\bar{\mathbb{Q}}(x)$. By \cite[Rem.~4.7]{hendriks:1998}, this is equivalent to the statement that the difference equation $$\sigma^2(y)+\frac{1-6x^2-4x^3}{2x^2+2x}\sigma(y)+\frac{1}{4x^2}y=0$$ has no hypergeometric solutions, which we again verify with the Maple procedure LREtools[hypergeomsols]. This implies that $\mathrm{SL}_2(C)\subseteq H$. Since $1=b=\frac{\sigma(1)}{1}$, it follows from Theorem~\ref{g-large}(i) that $G=\mathrm{SL}_2(C)$.

\bibliographystyle{spmpsci} \bibliography{difference-books}{} \nocite{*}

\end{document}